\documentclass[10pt, reqno]{amsart}
\usepackage{psfrag}
\usepackage[parfill]{parskip}
\usepackage{float, graphicx}
\usepackage[]{epsfig}
\usepackage{amsmath, amsthm, amssymb}
\usepackage{epsfig}
\usepackage{verbatim}
\usepackage{multicol}
\usepackage{url}
\usepackage{latexsym}
\usepackage{mathrsfs}
\usepackage[colorlinks, bookmarks=true]{hyperref}
\usepackage{graphicx}
\usepackage{amsmath}
\usepackage[top=2in, bottom=1.5in, left=1.1in, right=1.1in]{geometry}

\title{Asymptotic Expansion of Spherical Integral}
\date{}
\author{Jiaoyang Huang}
 \address{Department of Mathematics,
Harvard University}
\email{jiaoyang@math.harvard.edu}
\begin{document}
\newtheorem{theorem}{Theorem}
\newtheorem*{theorem*}{Theorem}
\newtheorem{lemma}{Lemma}
\newtheorem{proposition}{Proposition}
\newtheorem{remark}{Remark}
\maketitle
\begin{abstract}
We consider the spherical integral of real symmetric or Hermitian matrices when the rank of one matrix is one. We prove the existence of the full asymptotic expansions of these spherical integrals and derive the first and the second term in the asymptotic expansion. Using asymptotic expression of the spherical integral, we derive the asymptotic freeness of Wigner matrices with (deterministic) Hermitian matrices.
\end{abstract}
\section{Introduction}
In this paper we consider the expansion of the spherical integral
\begin{align}
I_N^{(\beta)}(A_N, B_N)=\int  \exp\{NTr(A_NU^{*}B_NU)\}dm_N^{(\beta)}(U),\label{int1}
\end{align}
where $m_N^{(\beta)}$ is the Haar measure on orthogonal group $O(N)$ if $\beta=1$, on unitary group $U(N)$ if $\beta=2$, and $A_N$, $B_N$ are deterministic $N\times N$ real symmetric or Hermitian matrices, that we can assume diagonal without loss of generality. We follow \cite{GuMa}, to investigate the asymptotics of the spherical integrals under the case $A_{N}=\text{diag}(\theta,0,0,0,0\cdots 0)$:
\begin{align}
\label{eq1}I^{(\beta)}_{N}(A_N, B_N)=I^{(\beta)}_{N}(\theta, B_N)=\int \exp\{\theta N (e_1^{*}B_N e_1)\}dm_N^{(\beta)}(U),
\end{align}
where $e_1$ is the first column of $U$.

The main result of this paper can be stated as follows,
\begin{theorem*}
If $\sup_{N}\|B_N\|_{\infty}<M$, then for any $\theta\in \mathbb{R}$ such that $|\theta|<\frac{1}{4M^2+10M+1}$, the spherical integral has the following asymptotic expansion (up to $O(\frac{1}{N^{n+1}})$ for any given $n$):
\begin{align}
\label{exp1}e^{-\frac{N}{2}\int_0^{2\theta}R_{B_N}(s)ds}I_N(\theta, B_N)
=m_0+\frac{m_1}{N}+\frac{m_2}{N^2}+\cdots +\frac{m_n}{N^n}+O(\frac{1}{N^{n+1}})
\end{align}
where $R_{B_N}$ is the R-transform of the empirical spectral distribute of $B_N$, the coefficients $\{m_i\}_{i=0}^{n}$ depend on $\theta$ and the derivatives of the Hilbert transform of the empirical spectral distribution of $B_N$.
\end{theorem*}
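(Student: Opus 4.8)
Our plan is to realize $I_N(\theta,B_N)$ as a ratio of two Laplace-type contour integrals, run the classical steepest-descent expansion on each (which automatically produces a full $1/N$-series), and then identify the exponential factor with the $R$-transform integral via an envelope-theorem computation. The analytic input is standard; essentially all the difficulty lies in making the estimates uniform over $\|B_N\|_\infty\le M$, which is exactly what pins down the threshold $|\theta|<\frac1{4M^2+10M+1}$.

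\emph{Step 1: reduction to a contour integral.} The first column $e_1$ of a Haar-distributed $U$ is uniform on the unit sphere of $\mathbb{R}^N$ (resp.\ $\mathbb{C}^N$), so $e_1\overset{d}{=}g/\|g\|$ with $g$ a centered isotropic Gaussian vector; moreover $g/\|g\|$ and $\|g\|^2$ are independent, and on $\{\|g\|^2=N\}$ one has $\theta N\,e_1^{*}B_Ne_1=\theta\,g^{*}B_Ng$. Hence
\[
I_N(\theta,B_N)=\frac{\mathbb{E}_g\big[e^{\theta\, g^{*}B_Ng}\,\delta(\|g\|^2-N)\big]}{\mathbb{E}_g\big[\delta(\|g\|^2-N)\big]}.
\]
Inserting $\delta(\|g\|^2-N)=\frac1{2\pi i}\int_{c-i\infty}^{c+i\infty}e^{\lambda(\|g\|^2-N)}\,d\lambda$ for $c$ in the region of convergence and carrying out the now genuinely Gaussian $g$-integral (which factorizes over the eigenvalues since $B_N$ is diagonal) gives
\[
\mathbb{E}_g\big[e^{\theta g^{*}B_Ng}\,\delta(\|g\|^2-N)\big]=\frac1{2\pi i}\int_{c-i\infty}^{c+i\infty}e^{N F_N(\lambda)}\,d\lambda,\qquad
F_N(\lambda)=-\lambda-\frac{\beta}{2}\int\log\!\big(1-\lambda-\theta x\big)\,d\mu_{B_N}(x),
\]
where $\mu_{B_N}$ is the empirical spectral distribution of $B_N$; the denominator is the same object with $\theta=0$ (equivalently, an explicit Gamma-type density at $N$, hence with a classical Stirling expansion).

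\emph{Step 2: steepest descent.} $F_N$ is analytic on $\{\Re\lambda<1-\theta\|B_N\|_\infty\}$ and strictly convex on the real part of this domain, so it has a unique real critical point $\lambda_\ast=\lambda_\ast(\theta,B_N)$, characterized after rearranging $F_N'(\lambda_\ast)=0$ by $G_{\mu_{B_N}}\!\big(\tfrac{1-\lambda_\ast}{\theta}\big)=\tfrac{2\theta}{\beta}$, i.e.\ $\tfrac{1-\lambda_\ast}{\theta}=K_{\mu_{B_N}}(2\theta/\beta)$ with $K_\mu=G_\mu^{-1}$. The role of the hypothesis $|\theta|<\frac1{4M^2+10M+1}$ is precisely to guarantee, uniformly over $\|B_N\|_\infty\le M$, that $K_{\mu_{B_N}}(2\theta/\beta)$ lies—and stays, on a fixed complex disc—at a definite distance from $[-M,M]\supseteq\operatorname{supp}\mu_{B_N}$, so that $F_N$ continues analytically to a fixed neighborhood of $\lambda_\ast$, that the vertical contour $\Re\lambda=\lambda_\ast$ is admissible, and that $|e^{NF_N}|$ decays superexponentially along it away from $\lambda_\ast$. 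Granting this, the standard steepest-descent lemma yields
\[
\mathbb{E}_g\big[e^{\theta g^{*}B_Ng}\,\delta(\|g\|^2-N)\big]=e^{NF_N(\lambda_\ast)}\,\big(2\pi N F_N''(\lambda_\ast)\big)^{-1/2}\Big(1+\frac{a_1}{N}+\frac{a_2}{N^2}+\cdots\Big),
\]
with $a_j$ the universal polynomials in $F_N^{(k)}(\lambda_\ast)$, $k\ge3$, and Stirling's formula expands the denominator analogously. Since $1-\lambda_\ast-\theta x=\theta\big(K_{\mu_{B_N}}(2\theta/\beta)-x\big)$, each $F_N^{(k)}(\lambda_\ast)$ with $k\ge2$ equals a $\theta$-dependent constant times $G_{\mu_{B_N}}^{(k-1)}\!\big(K_{\mu_{B_N}}(2\theta/\beta)\big)$, a derivative of the Hilbert transform of $\mu_{B_N}$ at a point determined by $\theta$ alone, which accounts for the stated dependence of the coefficients $m_i$.

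\emph{Step 3: identifying the exponent and assembly.} By the envelope theorem $\frac{d}{d\theta}F_N(\lambda_\ast(\theta))=\partial_\theta F_N(\lambda_\ast,\theta)=\frac{\beta}{2}\int\frac{x\,d\mu_{B_N}(x)}{1-\lambda_\ast-\theta x}$, and a one-line manipulation using the critical-point equation together with $K_\mu=R_\mu+1/(\cdot)$ rewrites this as $R_{\mu_{B_N}}(2\theta/\beta)$; integrating, $F_N(\lambda_\ast(\theta))-F_N(\lambda_\ast(0))=\frac{\beta}{2}\int_0^{2\theta/\beta}R_{\mu_{B_N}}(s)\,ds$, which for $\beta=1$ is $\frac12\int_0^{2\theta}R_{B_N}(s)\,ds$ (the $\beta=2$ normalization being the obvious analogue), while $F_N(\lambda_\ast(0))$ is exactly minus the exponential rate produced by the denominator's Stirling expansion. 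Forming the ratio, the exponential factors cancel to leave precisely $e^{(N/2)\int_0^{2\theta}R_{B_N}(s)\,ds}$, and dividing the two $1/N$-expansions gives \eqref{exp1} with $m_0=\big(F_N''(\lambda_\ast(0))/F_N''(\lambda_\ast(\theta))\big)^{1/2}$ and the higher $m_i$ explicit polynomials in $\theta$ and the derivatives of the Hilbert transform of the empirical spectral distribution of $B_N$. The main obstacle is the uniform control asserted in Step 2—contour deformation, analytic continuation on a fixed complex disc, and superexponential smallness of the non-Gaussian tail, all with constants depending only on $M$ and $\theta$; once that is secured, the remainder is bookkeeping.
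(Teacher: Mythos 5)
Your argument is correct in outline and arrives at the stated expansion, but by a genuinely different route from the paper's. You integrate out the Gaussian vector exactly against a contour representation of the constraint $\|g\|^2=N$, reducing everything to a single one-dimensional steepest-descent integral $\int e^{NF_N(\lambda)}\,d\lambda$ divided by a chi-square density; the full $1/N$-series then comes from the classical saddle-point expansion plus Stirling, and your envelope-theorem identity $F_N(\lambda_*(\theta))-F_N(\lambda_*(0))=\tfrac12\int_0^{2\theta}R_{B_N}(s)\,ds$ is exactly the paper's computation $\theta v-\tfrac1{2N}\sum_i\log(1-2\theta\lambda_i+2\theta v)=\tfrac12\int_0^{2\theta}R_{B_N}(s)\,ds$ in disguise (your saddle corresponds to $\lambda_*=-\theta v$ after fixing the factor-of-two normalization in $F_N$, and $F_N^{(k)}(\lambda_*)$ is, up to explicit constants, the paper's $A_k$). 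The paper instead stays with the $N$ real Gaussian variables: it tilts the measure by the saddle weights $1-2\theta\lambda_i+2\theta v$, localizes $(\gamma_N,\hat\gamma_N)$ by concentration of measure (Proposition \ref{proposition1}), and linearizes the residual quartic exponent with a two-variable Hubbard--Stratonovich identity, ending in a $2\times2$ complex Gaussian integral with determinant $A_2$. Your method buys a cleaner derivation of the full expansion --- the $m_i$ are the universal saddle-point polynomials in $F_N^{(k)}(\lambda_*)/F_N''(\lambda_*)^{k/2}$, hence in the $A_k$'s, and $m_0=\bigl(F_N''(\lambda_*(0))/F_N''(\lambda_*(\theta))\bigr)^{1/2}=1/\sqrt{A_2}$ agrees with Theorem \ref{rf} --- at the price of the uniform contour estimates you flag in Step 2. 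Those do go through: for $|\theta|<\frac1{4M^2+10M+1}$ the quantities $1+2\theta v-2\theta\lambda_i$ are bounded above and below uniformly in $i$ and $N$, so along the vertical contour the integrand is controlled by $\prod_i\bigl((1-2\lambda_*-2\theta\lambda_i)^2+4t^2\bigr)^{-1/4}\le(1+ct^2)^{-N/4}$, which is superexponentially small off the saddle, while $F_N''(\lambda_*)=2A_2$ stays bounded away from $0$; the paper uses the same hypothesis instead to make the real part of its matrix $K$ positive definite. So the proposal is sound, with the honest remaining work being exactly the uniformity bookkeeping you identify.
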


The spherical integral provides a finite dimensional analogue of the $R$-transform in free probability \cite{Ma, Vo}, which states that if $X$ and $Y$ are two freely independent self-adjoint non-commutative random variables, then their $R$-transforms satisfy the following additive formula:
\begin{align*}
R_{X+Y}=R_X+R_Y.
\end{align*}
In the scenario of random matrices, let $\{B_N\}$ and $\{\tilde{B}_N\}$ be sequences of uniformly bounded real symmetric (or Hermitian) matrices whose empirical spectral distributions converge in law towards $\tau_B$ and $\tau_{\tilde{B}}$ respectively. Let $V_{N}$ be a sequence of independent orthogonal (or unitary) matrices following the Haar measure. Then the noncommutative variables $\{B_N\}$ and $\{V_N^*\tilde{B}_NV_N\}$ are asymptotically free. And the law of their sum $\{B_N+V_N^*\tilde{B}_NV_N\}$ converges towards $\tau_{B+V^*\tilde{B}V}$, which is characterized by the following additive formula,
\begin{align}
\label{add1}R_{\tau_{B+V^*\tilde{B}V}}=R_{\tau_{B}}+R_{\tau_{\tilde{B}}}.
\end{align}
We refer to Section 5 in \cite{AGZ} for a proof of this.

For the spherical integral, using the same notation as above, we have the following additive formula
\begin{align*}
\frac{1}{N}\log\mathbb{E}_{V_N}[I_N^{(\beta)}(\theta, B_N+V_N^*\tilde{B}_N V_N)]=\frac{1}{N}\log I_N^{(\beta)}(\theta, B_N)+\frac{1}{N}\log I_N^{(\beta)}(\theta, \tilde{B}_N).
\end{align*}
So if we define
\begin{align*}
R_N(B_N)=\frac{1}{N}\log \mathbb{E}_{B_N}[I_N^{(\beta)}(\theta, B_N)],
\end{align*}
then the additive law can be formulated in a more concise way,
\begin{align}
\label{add2}R_N(B_N+V_N^*\tilde{B}_NV_N)=R_N(B_N)+R_N(\tilde{B}_N).
\end{align}
Compare with (\ref{add1}), which takes advantage of the fact that $\{B_N\}$ and $\{V_N^*\tilde{B}_NV_N\}$ are asymptotically free, and which holds after we take the limit as $N$ goes to infinity. However, the additive formula (\ref{add2}) holds for any $N$, which provides a finite dimensional analogue of the additivity of the $R$-transform. Indeed the $R$-transform is some sort of limit of our $R_N$:
\begin{align*}
\lim_{N\rightarrow\infty}R_N(B_N)=\lim_{N\rightarrow\infty}\frac{1}{N}\log I_{N}^{(\beta)}(\theta,B_N)=\frac{\beta}{2}\int_{0}^{\frac{2\theta}{\beta}}R_{\tau_B}(s)ds.
\end{align*}
Combining this and some concentration of measure inequalities, Guionnet and Ma\"{\i}da showed the additivity of the $R$-transform (\ref{add1}) is a direct consequence of (\ref{add2}) in \cite{GuMa}. Moreover, the finite dimensional analogue of $R$-transform may be used to study the convergence of sum of asymptotically free random matrices. Use the same notations as above, further assume $\{B_N\}$ and $\{\tilde{B}_N\}$ are asymptotically free. From our main results, $R_N(B_N+\tilde{B}_N)$ depends only asymptotically on the empirical measure of $B_N+\tilde{B}_N$. Therefore given the freeness of $\{B_N\}$ and $\{\tilde{B}_N\}$, $R_N(B_N+\tilde{B}_N)$ has the same limit as $R_N(B_N+V_N^*\tilde{B}_NV_N)=R_N(B_N)+R_N(\tilde{B}_N)$. This in turn provides some information about the convergence of the sum $B_N+\tilde{B}_N$.

The asymptotic expansion and some related properties of spherical integrals were thoroughly studied by Guionnet, Ma\"{\i}da and Zeitouni \cite{GuMa, GuZe, GuZeA}. However in their paper, they only studied the first term in the asymptotic expansion. We here derive the higher order asymptotic expansion terms. The proofs are different from those in \cite{GuMa}; Guionnet and Ma\"{\i}da relied on large deviation techniques and used central limit theorem to derive the first order term. In this paper, we express the spherical integral as an integral over only two Gaussian variables, hence allowing for easier asymptotic analysis.

The spherical integral can also be used to study the Schur polynomials. The spherical integral (\ref{int1}) can be expressed in terms of Schur polynomials. The Harish-Chandra-Itzykson-Zuber integral formula \cite{DH, IZ, Me} gives an explicit form for the integral (\ref{int1}) in the case $\beta=2$ and all the eigenvalues of $A$ and $B$ are simple:
\begin{align}\label{add3}
I_N^{(2)}(A, B)= \prod_{i=1}^{N}i! \frac{\det(e^{N\lambda_i(A)\lambda_j(B)})_{1\leq i,j\leq N}}{N^{\frac{N^2-N}{2}}\Delta(\lambda(A))\Delta(\lambda(B))},
\end{align}
where $\Delta$ denotes the Vandermonde determinant,
\begin{align*}
\Delta(\lambda(A))=\prod_{1\leq i<j\leq N}(\lambda_i(A)-\lambda_j(A)),
\end{align*}
and $\lambda_i(\cdot)$ is the $i$-th eigenvalue. If we define the $N$-tuple $\mu=(\lambda_i(B_N)-N+i)_{i=1}^{N}$, then the above expression (\ref{add3}) is the normalized Schur polynomial times an explicit factor,
\begin{align*}
I_N^{(2)}(A,B)
=&\frac{\prod_{i=1}^{N}i!}{N^{\frac{N^2-N}{2}}} \frac{S_{\mu}(e^{N\lambda_1(A)},e^{N\lambda_2(A)},\cdots, e^{N\lambda_N(A)})}{\prod_{i<j}((\mu_i-i)-(\mu_j-j))}\frac{\prod_{ i<j}(e^{N\lambda_i(A)}-e^{N\lambda_j(A)})}{\prod_{i<j }(\lambda_i(A)-\lambda_j(A))}\\
=& \frac{S_{\mu}(e^{N\lambda_1(A)},e^{N\lambda_2(A)},\cdots, e^{N\lambda_N(A)})}{S_{\mu}(1,1,\cdots,1)}\frac{\prod_{ i<j }(e^{N\lambda_i(A)}-e^{N\lambda_j(A)})}{\prod_{i<j}(N\lambda_i(A)-N\lambda_j(A))}.
\end{align*}
In \cite{GoPa}, Gorin and Panova studied the asymptotic expansion of the normalized Schur polynomial $\frac{S_\mu(x_1,x_2,\cdots x_k,1,1,1\dots 1)}{S_\mu(1,1,\cdots,1)}$, for fixed $k$. Its asymptotic expansion can be obtained from a limit formula (Proposition 3.9 in \cite{GoPa}), combining with the asymptotic results for $S_{\mu}(x_i,1,1,\cdots 1)$, which corresponds to the spherical integral where $A$ is of rank one. Therefore our methods give a new proof of some results in \cite{GoPa}.

\textbf{Organization of the Paper.} We summary the notations in Section \ref{sec2}. In Section \ref{sec3}, the main results for orthogonal case are proved. In Section \ref{sec4}, the unitary case are proved, which is an easy corollary of the orthogonal case. In Section \ref{sec5}, we use the spherical integral to prove the asymptotic freeness of Hermitian matrix and Wigner matrix. At last, some technical results are proved in the Appendix.

\section{Some Notations}\label{sec2}
Throughout this paper, $N$ is a parameter going to infinity. We use $O(N^{-l})$ to denote any quantity that is bounded in magnitude by $CN^{-l}$ for some constant $C>0$.

Given a real symmetric or Hermitian matrix $B$, with eigenvalues $\{\lambda_i\}_{i=1}^{N}$. Let $\lambda_{\min}(B)$ ($\lambda_{\max}(B)$) be the minimal (maximal) eigenvalue. We denote the Hilbert transform of its empirical spectral distribution by $H_B$:
\begin{align*}
H_B(z) : \mathbb{R}/[\lambda_{\min}(B),\lambda_{\max}(B)]&\mapsto \mathbb{R}\\
                                          z &\mapsto \frac{1}{N}\sum_{i=1}^{N}\frac{1}{z-\lambda_i}.
\end{align*}
On intervals $(-\infty,\lambda_{\min}(B))$ and $(\lambda_{\max}(B),+\infty)$, $H_B$ is monotonous. The $R$-transform of empirical spectral distribution of $B$ is
\begin{align*}
R_B(z)=H_B^{-1}(z)-\frac{1}{z},
\end{align*}
on $\mathbb{R}/\{0\}$, where $H_B^{-1}$ is the functional inverse of $H_B$ from $\mathbb{R}/\{0\}$ to $(-\infty,\lambda_{\min}(B))\cup(\lambda_{\max}(B),+\infty)$. In the following paper, for given non-zero real number $\theta$, we denote $v(\theta)=R_B(2\theta)$, for simplicity we will omit the variable $(\theta)$ in the expression of $v(\theta)$.

We denote the normalized $k$-th derivative of Hilbert transform
\begin{align*}
A_k=\frac{(-1)^{k-1}}{(k-1)!(2\theta)^{k}}\frac{d^{k-1}H_B}{dz^{k-1}}(v+\frac{1}{2\theta})
=\frac{1}{N}\sum_{i=1}^{N}\frac{1}{(1-2\theta\lambda_i+2\theta v)^k}.
\end{align*}
Notice from the definition of $v$, $A_1=1$. The coefficients in the asymptotic expansion (\ref{exp1}), can be represented in terms of these $A_k$'s.

Also we denote
\begin{align*}
F=&\frac{1}{N}\sum_{i=1}^{N}\frac{\lambda_i}{(1-2\theta\lambda_i+2\theta v)^2}=-\frac{1}{2\theta}+\frac{1+2\theta v}{2\theta}A_2,\\
G=&\frac{1}{N}\sum_{i=1}^{N}\frac{\lambda_i^2}{(1-2\theta\lambda_i+2\theta v)^2}=-\frac{1+4\theta v}{4\theta^2}+\frac{(1+2\theta v)^2}{4\theta^2}A_2.
\end{align*}

\section{Orthogonal Case}{\label{sec3}}

\subsection{First Order Expansion}
In this section we consider the real case, $\beta=1$. For simplicity, in this section we will omit the superscript ($\beta$) in all the notations. We can assume $B_N$ is diagonal $B_{N}=\text{diag}(\lambda_1,\lambda_2,\cdots, \lambda_N)$. Notice $e_1$ is the first column of $U$, which follows Haar measure on orthogonal group $O(N)$. $e_1$ is uniformly distributed on $S^{N}$ (orthogonally invariant) and can be represented as the normalized Gaussian vector,
\begin{align*}
e_1=\frac{g}{\|g\|},
\end{align*}
where $g=(g_1,g_2,\cdots, g_N)^T$ is the standard Gaussian vector in $\mathbb{R}^N$, and $\|\cdot\|$ is the Euclidean norm in $\mathbb{R}^N$.
Plug them into (\ref{eq1})
\begin{align}
\label{eq2}I_N(\theta, B_N)=\int \exp\left\{ \theta N \frac{\lambda_1g_1^2+\lambda_2g_2^2\cdots +\lambda_Ng_N^2}{g_1^2+g_2^2+\cdots g_N^2}\right\} \prod_{i=1}^{N}dP(g_i).
\end{align}
where $P(\cdot)$ is the standard Gaussian probability measure on $\mathbb{R}$. Following the paper \cite{GuMa}, define
\begin{align*}
\gamma_N=\frac{1}{N}\sum_{i=1}^{N}g_i^2-1,\qquad
\hat{\gamma}_N=\frac{1}{N}\sum_{i=1}^{N}\lambda_ig_i^2-v.
\end{align*}

Then (\ref{eq2}) can be rewritten in the following form
\begin{align*}
I_N(\theta, B_N)=\int\exp\{\theta N \frac{\hat{\gamma}_N+v}{\gamma_N+1}\}\prod_{i=1}^{N}dP(g_i).
\end{align*}
\begin{remark}
Notice $v$ is defined in Section \ref{sec2}, and satisfies the following explicit formulas:
\begin{align*}
\frac{1}{N}\sum_{i=1}^{N}\frac{1}{1-2\theta\lambda_i+2\theta v}=1,\qquad \frac{1}{N}\sum_{i=1}^{N}\frac{\lambda_i}{1-2\theta\lambda_i+2\theta v}=v.
\end{align*}
Next we will do the following change of measure
\begin{align*}
P_N(dg_1,dg_2,\cdots,dg_N)=\frac{1}{\sqrt{2\pi}^N}\prod_{i=1}^{N}\left(\sqrt{1+2\theta v-2\theta\lambda_i}e^{-\frac{1}{2}(1+2\theta v-2\theta\lambda_i)g_i^2}dg_i\right)
\end{align*}
With this measure, we have $\mathbb{E}(\gamma_N)=0$ and $\mathbb{E}(\hat{\gamma}_N)=0$. Therefore intuitively from the law of large number, as $N$ goes to infinity, $\gamma_N$ and $\hat{\gamma}_N$ concentrate at origin. This is exactly what we will do in the following.
\end{remark}

Let us also define
\begin{align*}
I_N^{\kappa_1,\kappa_2}(\theta, B_N)=\int_{|\gamma_N|\leq N^{-\kappa_1},\atop |\hat{\gamma}_N|\leq N^{-\kappa_2}}\exp\{\theta N \frac{\hat{\gamma}_N+v}{\gamma_N+1}\}\prod_{i=1}^{N}dP(g_i),
\end{align*}
where constants $\kappa_1$ and $\kappa_2$ satisfy $\frac{1}{2}>\kappa_1>2\kappa_2$ and $2\kappa_1+\kappa_2>1$. We prove the following proposition, then the difference between $I_N(\theta, B_N)$ and $I_N^{\kappa_1,\kappa_2}(\theta, B_N)$ is of order $O(N^{-\infty})$. A weaker form of this proposition also appears in \cite{GuMa}. With this proposition, for the asymptotic expansion, we only need to consider $I_N^{\kappa_1,\kappa_2}(\theta, B_N)$.
\begin{proposition}{\label{proposition1}}
Given constants $\kappa_1$ and $\kappa_2$ satisfying $\frac{1}{2}>\kappa_1>2\kappa_2$ and $2\kappa_1+\kappa_2>1$. There exist constants $c$, $c'$, depending on $\kappa_1$,$\kappa_2$ and $\sup \|B_N\|_{\infty}$, such that for $N$ large enough
\begin{align}
\label{eq3}|I_N(\theta, B_N)-I_N^{\kappa_1,\kappa_2}(\theta, B_N)|\leq c e^{-c'N^{1-2\kappa_1}}I_N(\theta, B_N).
\end{align}
\end{proposition}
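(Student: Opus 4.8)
The plan is to show that restricting the integration to the region $\{|\gamma_N|\le N^{-\kappa_1}, |\hat\gamma_N|\le N^{-\kappa_2}\}$ discards only an exponentially small fraction of the mass. The natural tool is the change of measure $P_N$ introduced in the Remark, under which $\gamma_N$ and $\hat\gamma_N$ are centered sums of independent random variables with finite exponential moments (since $\sup_N\|B_N\|_\infty<M$ and $|\theta|$ is small, the weights $1+2\theta v-2\theta\lambda_i$ are bounded above and below away from $0$, so each $g_i^2$ has a uniformly bounded subexponential tail). First I would write the Radon--Nikodym derivative $\prod_i dP(g_i)/P_N(dg_1,\dots,dg_N) = \prod_i (1+2\theta v-2\theta\lambda_i)^{-1/2}\exp\{-\theta(v-\lambda_i)g_i^2\}$ and combine it with the integrand $\exp\{\theta N(\hat\gamma_N+v)/(\gamma_N+1)\}$. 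On the complement region the key point is that either $|\gamma_N|> N^{-\kappa_1}$ or $|\hat\gamma_N|>N^{-\kappa_2}$, and in either case a standard exponential Markov (Bernstein / Chernoff) bound under $P_N$ gives a tail of order $\exp\{-c' N^{1-2\kappa_1}\}$ for the $\gamma_N$ deviation and $\exp\{-c' N^{1-2\kappa_2}\}$ for the $\hat\gamma_N$ deviation; since $\kappa_1>2\kappa_2$ implies $1-2\kappa_1 < 1-2\kappa_2$ wait --- actually $\kappa_1 > \kappa_2$ so $1-2\kappa_1 < 1-2\kappa_2$, meaning the $\gamma_N$ bound is the weaker (larger) one, which is exactly the exponent $N^{1-2\kappa_1}$ appearing in \eqref{eq3}.

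The main technical step is to control the integrand $\exp\{\theta N(\hat\gamma_N+v)/(\gamma_N+1)\}$ on the tail region so that it cannot compensate the exponentially small probability. I would split the tail region according to the size of $\gamma_N$. When $|\gamma_N|$ is bounded (say $\le 1/2$), the exponent $\theta N(\hat\gamma_N+v)/(\gamma_N+1)$ is at most of order $N(|\hat\gamma_N|+|v|)$, and one absorbs this into the Gaussian-type tail of $\hat\gamma_N$ (or of $\gamma_N$) by a shift-of-mean argument: writing $\hat\gamma_N = \frac1N\sum(\lambda_i g_i^2 - v)$, the exponential weight $e^{\theta(\hat\gamma_N+v)N/(\gamma_N+1)}$ reweights the $g_i^2$, and for $|\theta|<1/(4M^2+10M+1)$ the resulting tilted measure still has uniformly bounded exponential moments — this is where the precise numerical bound on $\theta$ is used. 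When $|\gamma_N|$ is large, i.e. $\frac1N\sum g_i^2$ is far from $1$, the integrand is crudely bounded by $e^{|\theta| N \|B_N\|_\infty (\gamma_N+v)/(\gamma_N+1)} \le e^{C N}$ for $\gamma_N \ge 0$ and is even smaller for $\gamma_N$ close to $-1$; but the probability that $\frac1N\sum g_i^2$ deviates by a constant is $e^{-cN}$, which for $|\theta|$ small beats $e^{CN}$. Comparing $I_N(\theta,B_N)$ itself from below is easy: $I_N \ge I_N^{\kappa_1,\kappa_2}$, and the latter is bounded below by a constant (it contains a neighborhood of the concentration point of $\gamma_N,\hat\gamma_N$ under $P_N$, on which the integrand is $\ge e^{\theta N v - O(N^{1-\kappa_1})} \cdot P_N(\text{that neighborhood}) \ge c\, e^{\theta N v}$, up to the normalization), so dividing the tail bound by $I_N$ preserves the exponential smallness.

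Concretely the steps in order are: (i) record the Radon--Nikodym identity relating $\prod dP(g_i)$ and $P_N$, and rewrite both $I_N$ and $I_N^{\kappa_1,\kappa_2}$ as integrals against $P_N$; (ii) prove a lower bound $I_N(\theta,B_N)\ge c\, e^{\theta N v}$ (or the appropriate normalization) by restricting to a small ball around the $P_N$-mean; (iii) on the tail region, dominate the integrand by an explicit exponential in $N\gamma_N$ and $N\hat\gamma_N$, valid on the two sub-regimes $|\gamma_N|\le 1/2$ and $|\gamma_N|>1/2$, using $|\theta|<1/(4M^2+10M+1)$ to keep all tilted exponential moments finite; (iv) apply a Bernstein/Chernoff estimate under $P_N$ (or the tilted measure) to each piece, obtaining a bound $\le c\, e^{-c'N^{1-2\kappa_1}} e^{\theta N v}$; (v) combine (ii) and (iv). The condition $2\kappa_1+\kappa_2>1$ is needed in step (iii)–(iv) to ensure that the cross-terms coming from $(\gamma_N+1)^{-1}\approx 1-\gamma_N$ expanded against $\hat\gamma_N$ are genuinely lower order than the main Gaussian exponent. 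I expect the main obstacle to be step (iii): producing a clean, uniform-in-$N$ exponential domination of $\exp\{\theta N(\hat\gamma_N+v)/(\gamma_N+1)\}$ on the full tail region — in particular handling the interaction between the denominator $\gamma_N+1$ possibly being small and $\hat\gamma_N$ being large simultaneously, which is precisely what forces the somewhat delicate constraint $\frac12>\kappa_1>2\kappa_2$ and the explicit smallness $|\theta|<1/(4M^2+10M+1)$.
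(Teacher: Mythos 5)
Your proposal follows essentially the same route as the paper: the paper likewise changes measure via the substitution $g_i=\tilde g_i/\sqrt{1-2\theta\lambda_i+2\theta v}$ (equivalently, the tilt $P_N$), uses the identity $\theta N\tfrac{\hat\gamma_N+v}{\gamma_N+1}=\theta Nv+\theta N(\hat\gamma_N-v\gamma_N)-\theta N\gamma_N\tfrac{\hat\gamma_N-v\gamma_N}{\gamma_N+1}$ to isolate the tilted Gaussian, bounds the residual exponent by $O(N^{1-\kappa_1})$ on $\{|\gamma_N|\le N^{-\kappa_1}\}$, applies the Chernoff-type concentration of Lemma \ref{CMI}, and divides by the lower bound on $I_N(\theta,B_N)$ from Lemma 14 of \cite{GuMa}. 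One small correction: the hypothesis $2\kappa_1+\kappa_2>1$ plays no role in this proposition (only $\kappa_1>2\kappa_2$ is needed, to ensure $e^{CN^{1-\kappa_1}}\ll e^{c'N^{1-2\kappa_2}}$); that condition and the explicit smallness of $\theta$ are used only in the later Taylor-expansion and Gaussian-integral steps, not here.
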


Before proving this proposition, we state the following useful lemma of concentration measure on sum of squares of Gaussian random variables.
\begin{lemma}\label{CMI}
Given independent gaussian random variables $\{g_i\}_{i=1}^{N}$, consider the weighted sum $\sum_{i=1}^{N}{a_i g_i^2}$. If there exists some constant $c>0$, such that $\max\{|a_i|\}\leq \frac{c}{\sqrt{N}}$, then for $N$ large enough, the weighted sum satisfies the following concentration inequality,
\begin{align*}
\mathbb{P}(|\sum_{i=1}^{N}a_i(g_i^2-1)|\geq N^{\kappa})\leq 2e^{-c' N^{2\kappa}},  \qquad 0<\kappa<\frac{1}{2}.
\end{align*}
\end{lemma}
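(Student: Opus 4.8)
The plan is to prove Lemma~\ref{CMI} by the exponential Chebyshev (Chernoff) method: bound the logarithmic moment generating function of $\sum_{i=1}^{N}a_i(g_i^2-1)$ by a quadratic in the dual parameter on a fixed neighbourhood of the origin, then optimise.

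First I would record the exact Gaussian identity: for a standard real Gaussian $g$ and any $s$ with $2s<1$,
\[
\mathbb{E}\big[e^{s(g^2-1)}\big]=e^{-s}(1-2s)^{-1/2},\qquad \log\mathbb{E}\big[e^{s(g^2-1)}\big]=-s-\tfrac12\log(1-2s).
\]
The function $f(s)=-s-\tfrac12\log(1-2s)$ vanishes to second order at $s=0$ and is convex, with $f''(s)=2(1-2s)^{-2}\le 8$ for $|s|\le 1/4$; hence $f(s)\le 4s^2$ on $|s|\le 1/4$. Applying this with $s=\lambda a_i$ and using independence of the $g_i$, I obtain, as long as $|\lambda|\,\max_i|a_i|\le 1/4$,
\[
\log\mathbb{E}\Big[\exp\Big(\lambda\sum_{i=1}^{N}a_i(g_i^2-1)\Big)\Big]=\sum_{i=1}^{N}f(\lambda a_i)\le 4\lambda^2\sum_{i=1}^{N}a_i^2\le 4c^2\lambda^2,
\]
where the last step uses the hypothesis $\max_i|a_i|\le c/\sqrt N$, so that $\sum_i a_i^2\le c^2$. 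The admissibility condition on $\lambda$ reads $|\lambda|\le \sqrt N/(4c)$.

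Next, Chebyshev's exponential inequality gives, for every $\lambda$ with $0<\lambda\le \sqrt N/(4c)$,
\[
\mathbb{P}\Big(\sum_{i=1}^{N}a_i(g_i^2-1)\ge N^{\kappa}\Big)\le \exp\big(-\lambda N^{\kappa}+4c^2\lambda^2\big).
\]
Choosing $\lambda=N^{\kappa}/(8c^2)$ makes the exponent equal to $-N^{2\kappa}/(16c^2)$; since $\kappa<1/2$, for $N$ large this $\lambda$ satisfies the admissibility bound $\lambda\le\sqrt N/(4c)$, so we conclude $\mathbb{P}(\sum_i a_i(g_i^2-1)\ge N^{\kappa})\le \exp(-N^{2\kappa}/(16c^2))$. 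Running the same computation with $a_i$ replaced by $-a_i$ — the identity $\mathbb{E}[e^{-s(g^2-1)}]=e^{s}(1+2s)^{-1/2}$ and the analogous quadratic bound $s-\tfrac12\log(1+2s)\le 4s^2$ for $|s|\le1/4$ being valid in exactly the same range — controls the left tail, and a union bound yields the claim with $c'=1/(16c^2)$.

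I do not anticipate a genuine obstacle; the only thing needing care is the bookkeeping of the admissible range of the dual parameter, namely that the quadratic upper bound on $f$ holds on a fixed neighbourhood of $0$ (ensured by $\max_i|a_i|\le c/\sqrt N$) and that the near-optimal choice $\lambda=N^{\kappa}/(8c^2)$ stays inside that neighbourhood, which is precisely where the restriction $\kappa<1/2$ and ``$N$ large enough'' are used. Alternatively one could quote a Bernstein inequality for sub-exponential random variables, but the direct estimate above is self-contained.
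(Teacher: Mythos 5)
Your proof is correct and follows exactly the route the paper indicates (the paper's proof is a one-line remark: apply Markov's inequality to $\exp\{t\sum_i a_i(g_i^2-1)\}$ for a well-chosen $t$); you have simply carried out that Chernoff computation in full, with the correct quadratic bound on the log-moment generating function and the admissibility check $\lambda=N^{\kappa}/(8c^2)\le\sqrt{N}/(4c)$ for large $N$, which is where $\kappa<\tfrac12$ enters.
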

\begin{proof}
This can be proved by applying Markov's inequality to $\exp\{t\sum_{i=1}^{N}a_i(g_i^2-1)\}$ for some well-chosen value of $t$.
\end{proof}

\begin{proof}
We can split (\ref{eq3}) into two parts
\begin{align*}
I_N(\theta, B_N)-I_N^{\kappa_1,\kappa_2}(\theta, B_N)
=\underbrace{\left(\int -\int_{|\gamma_N|\leq N^{-\kappa_1}}\right)}_{E_1}
+\underbrace{\left(\int_{|\gamma_N|\leq N^{-\kappa_1}} -\int_{|\gamma_N|\leq N^{-\kappa_1},\atop |\hat{\gamma_N}|\leq N^{-\kappa_2}}\right)}_{E_2}
\end{align*}
Follow the same argument as in Lemma $14$(\cite{GuMa}),
\begin{align*}
|E_1|\leq ce^{-c'N^{1-2\kappa_1}}I_{N}(\theta, B_N).
\end{align*}
For $E_2$, notice that
\begin{align*}
&\exp\{\theta N \frac{\hat{\gamma}_N+v}{\gamma_N+1}\}\prod_{i=1}^{N}dP(g_i)\\
=& \exp\left\{\theta Nv+\theta N(\hat{\gamma}_N-v\gamma_N)- \theta N \gamma_N\frac{\hat{\gamma}_N-v\gamma_N}{\gamma_N+1}\right\}\prod_{i=1}^{N}dP(g_i)\\
=&e^{\theta N v} \exp\left\{-\theta N \gamma_N\frac{\hat{\gamma}_N-v\gamma_N}{\gamma_N+1}\right\}\prod_{i=1}^{N}\frac{1}{\sqrt{2\pi}} e^{-\frac{1}{2}g_i^2(1-2\theta\lambda_i+2\theta v)}dg_i
\end{align*}
With the change of variable $g_i=\frac{\tilde{g}_i}{\sqrt{1-2\theta\lambda_i+2\theta v}}$,
\begin{align*}
&\exp\{\theta N \frac{\hat{\gamma}_N+v}{\gamma_N+1}\}\prod_{i=1}^{N}dP(g_i)\\
=&e^{\theta N v-\frac{1}{2}\sum_{i=1}^{N}\log(1-2\theta\lambda_i+2\theta v)} \exp\left\{-\theta N \gamma_N\frac{\hat{\gamma}_N-v\gamma_N}{\gamma_N+1}\right\}\prod_{i=1}^{N}dP(\tilde{g}_i).
\end{align*}
For the exponential term $\theta v-\frac{1}{2N}\sum_{i=1}^{N}\log(1-2\theta\lambda_i+2\theta v)$, if we take derivative with respect to $\theta$, and then integrate from $0$, we get that  
\begin{align*}
\theta v-\frac{1}{2N}\sum_{i=1}^{N}\log(1-2\theta\lambda_i+2\theta v)=\frac{1}{2}\int_{0}^{2\theta}R_{B_N}(s)ds.
\end{align*}
We can bound $|E_2|$,
\begin{align*}
|E_2|=&\left|\int_{|\gamma_N|\leq N^{-\kappa_1},\atop |\hat{\gamma}_N|>N^{-\kappa_2}}\exp\{\theta N \frac{\hat{\gamma}_N+v}{\gamma_N+1}\}\prod_{i=1}^{N}dP(g_i)\right|\\
=&e^{\frac{N}{2}\int_0^{2\theta}R_{B_N}(s)ds}\left|\int_{|\gamma_N|\leq N^{-\kappa_1},\atop|\hat{\gamma}_N|>N^{-\kappa_2}}\exp\{\theta N \gamma_N\frac{\hat{\gamma}_N-v\gamma_N}{\gamma_N+1}\}\prod_{i=1}^{N}
dP(\tilde{g}_i)\right|\\
\leq &e^{\frac{N}{2}\int_0^{2\theta}R_{B_N}(s)ds}e^{|\theta| (\|B_N\|_{\infty}+|v|)N^{1-\kappa_1}}P(|\hat{\gamma}_N|> N^{-\kappa_2}),
\end{align*}
where
\begin{align*}
P(|\hat{\gamma}_N|>N^{-\kappa_2})
=&P(|\frac{1}{N}\sum_{i=1}^{N}\frac{\lambda_i\tilde{g}_i^2}{1-2\theta\lambda_i+2\theta v}-v|>N^{-\kappa_2})\\
=&P(|\frac{1}{\sqrt{N}}\sum_{i=1}^{N}\frac{\lambda_i(\tilde{g}_i^2-1)}{1-2\theta\lambda_i+2\theta v}|>N^{\frac{1}{2}-\kappa_2})\\
\leq& 2e^{-c'N^{1-2\kappa_2}}.
\end{align*}
The last inequality comes from the concentration measure inequality in Lemma \ref{CMI} and the uniformly boundedness of $\|B_N\|_{\infty}$.
Moreover from Lemma $14$ of \cite{GuMa}, we have the lower bound for $I_N(\theta, B_N)$,
\begin{align*}
I_N(\theta,B_N)\geq
&ce^{\frac{N}{2}\int_0^{2\theta}R_{B_N}(s)ds}e^{-|\theta| (\|B_N\|_{\infty}+v)N^{1-\kappa_1}}.
\end{align*}
From our assumption $\kappa_1>2\kappa_2$, so $e^{|\theta| (\|B_N\|_{\infty}+v)N^{1-\kappa_1}}\ll e^{c'N^{1-2\kappa_2}}$. Therefore for $N$ large enough
\begin{align*}
|E_2|\leq ce^{-c'N^{1-2\kappa_2}}I_N(\theta,B_N).
\end{align*}
This finishes the proof of (\ref{eq3}).
\end{proof}

Since the difference between $I_N(\theta, B_N)$ and $I_N^{\kappa_1,\kappa_2}(\theta, B_N)$ is of order $O(N^{-\infty})$, for asymptotic expansion, we only need to consider $I_N^{\kappa_1,\kappa_2}(\theta, B_N)$.
\begin{align*}
I^{\kappa_1,\kappa_2}_N(\theta, B_N)
= e^{\frac{N}{2}\int_0^{2\theta}R_{B_N}(s)ds}\int_{|\gamma_N|\leq N^{-\kappa_1},\atop |\hat{\gamma}_N|\leq N^{-\kappa_2}}\exp\{-\theta N\gamma_N\frac{\hat{\gamma}_N-v\gamma_N}{\gamma_N+1}\}
\prod_{i=1}^{N}dP(\tilde{g}_i),
\end{align*}

The next thing is to expand the following integral,
\begin{align}
\label{eq4}\int_{|\gamma_N|\leq N^{-\kappa_1},\atop |\hat{\gamma}_N|\leq N^{-\kappa_2}}\exp\{-\theta N\gamma_N\frac{\hat{\gamma}_N-v\gamma_N}{\gamma_N+1}\}
\prod_{i=1}^{N}dP(\tilde{g}_i).
\end{align}
Since the denominator $\gamma_N+1$ in the exponent has been restricted in a narrow interval centered at $1$, we can somehow "ignore" it by Taylor expansion, which results in the following error $R_1$,
\begin{align}
\notag R_1=&\Big{|}\int_{|\gamma_N|\leq N^{-\kappa_1},\atop |\hat{\gamma}_N|\leq N^{-\kappa_2}}\exp\{-\theta N\gamma_N\frac{\hat{\gamma}_N-v\gamma_N}{\gamma_N+1}\}
\prod_{i=1}^{N}dP_i(g_i)\\
\notag -&\int_{|\gamma_N|\leq N^{-\kappa_1},\atop |\hat{\gamma}_N|\leq N^{-\kappa_2}}\exp\{-\theta N \gamma_N(\hat{\gamma}_N-v\gamma_N)\}\prod_{i=1}^{N}dP_i(g_i)\Big{|}\\
\label{er1}=&\left|\int_{|\gamma_N|\leq N^{-\kappa_1}, \atop |\hat{\gamma}_N|\leq N^{-\kappa_2}}\exp\{-\theta N \gamma_N(\hat{\gamma}_N-v\gamma_N)\}(\exp\{-\theta N\gamma_N^2\frac{\hat{\gamma}_N-v\gamma_N}{\gamma_N+1}\}-1)
\prod_{i=1}^{N}dP_i(g_i)\right|.
\end{align}
Under our assumption $2\kappa_1+\kappa_2>1$, $-\theta N\gamma_N^2\frac{\hat{\gamma}_N-v\gamma_N}{\gamma_N+1}=O(N^{1-2\kappa_1-\kappa_2})=o(1)$, which means the above error $R_1$ is of magnitude $o(1)$. Therefore the error $R_1$ won't contribute to the first term in asymptotic expansion of integral (\ref{eq4}). And the first term of asymtotics of the spherical integral comes from the following integral
\begin{align}
\label{firstt}\int_{|\gamma_N|\leq N^{-\kappa_1},\atop |\hat{\gamma}_N|\leq N^{-\kappa_2}}\exp\{-\theta N \gamma_N(\hat{\gamma}_N-v\gamma_N)\}\prod_{i=1}^{N}dP_i(g_i).
\end{align}

We will revisit this for higher order expansion in next section. In the rest part of this section, we prove the following theorem about the first term of asymptotic expansion of spherical integral. This strengthens Theorem $3$ in \cite{GuMa}, where they require additional conditions on the convergence speed of spectral measure).

\begin{theorem}\label{rf}
If $\sup_{N}\|B_N\|_{\infty}<M$, then for any $\theta\in \mathbb{R}$ such that $|\theta|<\frac{1}{4M^2+10M+1}$, the spherical integral has the following asymptotic expansion,
\begin{align}
\notag&e^{-\frac{N}{2}\int_0^{2\theta} R_{B_N}(s)ds}I_N(\theta, B_N)\\
\label{eq2.1}=&\int_{|\gamma_N|\leq N^{-\kappa_1},\atop |\hat{\gamma}_N|\leq N^{-\kappa_2}}\exp\{-\theta N \gamma_N(\hat{\gamma}_N-v\gamma_N)\}\prod_{i=1}^{N}dP_i(g_i)+o(1)=\frac{1}{\sqrt{A_2}}+o(1),
\end{align}
where $R_{B_N}$ is the R-transform of the empirical spectral measure of $B_N$, and $A_2=\frac{1}{N}\sum_{i=1}^{N}\frac{1}{(1+2\theta v-2\theta \lambda_i)^2}$.
\end{theorem}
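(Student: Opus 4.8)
The first equality in $(\ref{eq2.1})$ is essentially already established above: Proposition \ref{proposition1} replaces $I_N(\theta,B_N)$ by $I_N^{\kappa_1,\kappa_2}(\theta,B_N)$ up to $O(N^{-\infty})$, the change of measure and the substitution $g_i=\tilde g_i/\sqrt{u_i}$ (with $u_i:=1+2\theta v-2\theta\lambda_i$) extract the prefactor $e^{\frac N2\int_0^{2\theta}R_{B_N}(s)\,ds}$, and the bound on $R_1$ in $(\ref{er1})$ --- which is $o(1)$ because $2\kappa_1+\kappa_2>1$ --- lets us drop the factor $1/(\gamma_N+1)$ in the exponent. So it remains to show that the integral $(\ref{firstt})$ tends to $A_2^{-1/2}$. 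Write $h_i=\tilde g_i^2-1$, so that under $\prod_i dP(\tilde g_i)$ the $h_i$ are i.i.d., centred, with variance $2$, and $\gamma_N=\frac1N\sum_i h_i/u_i$, $\hat\gamma_N=\frac1N\sum_i\lambda_i h_i/u_i$; the two identities defining $v$ say precisely that $\mathbb{E}[\gamma_N]=\mathbb{E}[\hat\gamma_N]=0$, and a one-line computation gives $\mathrm{Cov}\big((\sqrt N\gamma_N,\sqrt N\hat\gamma_N)^{\mathsf T}\big)=\Sigma:=2\left(\begin{smallmatrix}A_2&F\\F&G\end{smallmatrix}\right)$. The exponent in $(\ref{firstt})$ is the quadratic form $-\theta N\gamma_N(\hat\gamma_N-v\gamma_N)=\theta vN\gamma_N^2-\theta N\gamma_N\hat\gamma_N=\frac12 Z_N^{\mathsf T}QZ_N$, where $Z_N=(\sqrt N\gamma_N,\sqrt N\hat\gamma_N)^{\mathsf T}$ and $Q=\left(\begin{smallmatrix}2\theta v&-\theta\\-\theta&0\end{smallmatrix}\right)$.

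The point is to show that $(\ref{firstt})$ converges to the corresponding Gaussian integral
\begin{align*}
\int_{\mathbb{R}^2}e^{\frac12 z^{\mathsf T}Qz}\,\frac{e^{-\frac12 z^{\mathsf T}\Sigma^{-1}z}}{2\pi\sqrt{\det\Sigma}}\,dz=\big(\det(I-\Sigma Q)\big)^{-1/2},
\end{align*}
i.e.\ a central limit theorem for $Z_N$ \emph{together with} convergence of the exponential moment. Since the integrand is unbounded I would not use the CLT directly, but \emph{linearise} the quadratic form. As $\det Q=-\theta^2<0$, $Q$ has eigenvalues of opposite sign; diagonalising $Q$ and applying $e^{cw^2}=(4\pi c)^{-1/2}\int_{\mathbb{R}}e^{-u^2/(4c)+uw}\,du$ and $e^{-cw^2}=(4\pi c)^{-1/2}\int_{\mathbb{R}}e^{-u^2/(4c)+iuw}\,du$ (for $c>0$) rewrites $e^{\frac12 Z_N^{\mathsf T}QZ_N}$ as a two-dimensional integral over auxiliary variables $\xi=(\xi_1,\xi_2)$ --- one contour real, one imaginary --- whose integrand depends on $\tilde g$ only through $\exp\{\sum_i\alpha_i(\xi)h_i\}$, linearly in each $h_i$. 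Interchanging with $\int\cdot\,\prod_i dP(\tilde g_i)$ on the bounded region of $(\ref{firstt})$ and using $\int e^{\alpha h_i}dP(\tilde g_i)=e^{-\alpha}(1-2\alpha)^{-1/2}$ collapses the $\tilde g$-integral to $e^{N\Phi_N(\xi)}$ for an explicit $\Phi_N$; since $\gamma_N,\hat\gamma_N$ are centred, $\nabla\Phi_N(0)=0$ and $\Phi_N(0)=0$. Laplace's method in $\xi$ (after deforming to a genuine steepest-descent contour) then produces the displayed Gaussian value once the Hubbard--Stratonovich prefactors cancel against the Gaussian normalisation.

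Evaluating $\det(I-\Sigma Q)$ is routine: from $F=-\frac1{2\theta}+\frac{1+2\theta v}{2\theta}A_2$ one gets $1+2\theta F=(1+2\theta v)A_2$ and $1-4\theta vA_2+2\theta F=(1-2\theta v)A_2$, so expanding the $2\times2$ determinant of $I-\Sigma Q$ and factoring out $A_2$,
\begin{align*}
\det(I-\Sigma Q)
&=A_2\Big[A_2(1-4\theta^2v^2)+8\theta^2vF-4\theta^2G\Big]
=A_2\cdot\frac1N\sum_{i=1}^N\frac{1-4\theta^2(v-\lambda_i)^2}{u_i^2}\\
&=A_2\cdot\frac1N\sum_{i=1}^N\frac{2-u_i}{u_i}=A_2,
\end{align*}
using $1-4\theta^2(v-\lambda_i)^2=(2-u_i)u_i$ and $\frac1N\sum_i 1/u_i=1$. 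Hence $(\ref{firstt})\to A_2^{-1/2}$, which is $(\ref{eq2.1})$. (When $B_N$ is a scalar matrix $\Sigma$ is singular and this computation degenerates, but then $I_N(\theta,B_N)=e^{\theta N\lambda_1}=e^{\frac N2\int_0^{2\theta}R_{B_N}(s)ds}$ and $A_2=1$, so the statement holds trivially.)

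The step I expect to be the real work is the rigorous justification of the Laplace reduction. One cannot simply push the $\tilde g$-integral over all of $\mathbb{R}^N$ before integrating $\xi$, because $\int e^{\alpha h_i}dP(\tilde g_i)$ diverges once $\mathrm{Re}(1-2\alpha)\le0$; and within the truncated region of $(\ref{firstt})$ itself the integrand $e^{-\theta N\gamma_N(\hat\gamma_N-v\gamma_N)}$ can be as large as $e^{o(N)}$ on events whose probability is not $o(1)$, so the moderate- and large-deviation ranges of $\gamma_N,\hat\gamma_N$ (and of $\xi$) must be shown negligible. This is exactly where $|\theta|<\frac1{4M^2+10M+1}$ is used: it forces the quadratic form $\Sigma^{-1}-Q$ --- equivalently $-\nabla^2\Phi_N(0)$, and in fact $-\Phi_N$ on a neighbourhood of $0$ whose size depends only on $M$ --- to be positive definite with a bound uniform in $N$ and in $B_N$ subject to $\|B_N\|_\infty<M$, so that Laplace's method applies with an $o(1)$ error; together with the concentration bound of Lemma \ref{CMI} this disposes of the truncated pieces. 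Everything else (the two identities for $v$, the covariance computation, the determinant identity) is direct calculation.
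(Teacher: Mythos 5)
Your proposal is correct and follows essentially the same route as the paper: after the reductions of Proposition \ref{proposition1} and the $R_1$ bound, the paper also linearises the quadratic form $-\theta N\gamma_N(\hat\gamma_N-v\gamma_N)$ by writing it as a difference of two squares and applying a truncated Hubbard--Stratonovich identity with one imaginary and one real auxiliary Gaussian, integrates out the $\tilde g_i$, and evaluates the resulting $2\times2$ complex Gaussian integral whose determinant equals $A_2$. Your $\det(I-\Sigma Q)=A_2$ computation is the same identity the paper obtains as $\det(K)=A_2$, and the delicate points you flag (truncation of the auxiliary variables so that $\mathrm{Re}(1-2\alpha)>0$, negligibility of the integral outside $\mathcal{D}$, and the role of the smallness condition on $\theta$ in keeping the real part of the quadratic form positive definite) are exactly the ones the paper addresses.
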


Notice (\ref{eq2.1}) is an integral of $N$ Gaussian variables and the exponent $-\theta N \gamma_N(\hat{\gamma}_N-v\gamma_N)$ is a quartic polynomial in terms of $g_i$'s; So one can not compute the above integral directly. Using the following lemma, by introducing two more Gaussian variables $x_1$ and $x_2$, we can reduce the degree of the exponent to two, and  it turns to be ordinary Gaussian integral. Then we can directly compute it, and obtain the higher order asymptotic expansion.
\begin{lemma}
For any $\alpha\in \mathbb{C}$ with $|\alpha|\leq CN^{\kappa}$,
\begin{align*}
e^{\frac{\alpha^2}{2}}=\frac{1}{\sqrt{2\pi}}\int_{I} e^{-\frac{x^2}{2}}e^{-x\alpha}dx + O(N^{-\infty}),
\end{align*}
where interval $I=[-N^{\kappa+\epsilon}, N^{\kappa+\epsilon}]$ for any $\epsilon>0$.
\begin{proof}
Recall the formula, for any $\alpha\in \mathbb{C}$
\begin{align*}
\frac{1}{\sqrt{2\pi}}\int_{\mathbb{R}} e^{-\frac{(x+\alpha)^2}{2}}=1.
\end{align*}
The main contribution of the above integral comes from where $|x+\alpha|$ is small. More precisely,
\begin{align*}
&\left|e^{\frac{\alpha^2}{2}}\int_{\mathbb{R}} e^{-\frac{(x+\alpha)^2}{2}}dx-e^{\frac{\alpha^2}{2}}\int_{I} e^{-\frac{(x+\alpha)^2}{2}}\right|
\leq  e^{\frac{|\alpha|^2}{2}}\left|\int_{I^c}e^{-\frac{(x+\alpha)^2}{2}}dx
\right|\\
\leq &2e^{\frac{|\alpha|^2}{2}}
\int_{N^{\kappa+\epsilon}}^{\infty}e^{-\frac{(x-|\alpha|)^2}{2}}dx
\leq
2e^{\frac{C^2N^{2\kappa}}{2}}\int_{N^{\kappa+\epsilon}-CN^{\kappa}}^{\infty}e^{-\frac{x^2}{2}}dx
\leq
c' e^{-cN^{2\kappa+2\epsilon}}=O(N^{-\infty}),
\end{align*}
where $c$ and $c'$ are constants independent of $N$. Therefore
\begin{align*}
e^{\frac{\alpha^2}{2}}
=e^{\frac{\alpha^2}{2}}\frac{1}{\sqrt{2\pi}}\int e^{-\frac{(x+\alpha)^2}{2}}=\frac{1}{\sqrt{2\pi}}\int_{I} e^{-\frac{x^2}{2}}e^{-x\alpha}dx + O(N^{-\infty}).
\end{align*}
\end{proof}

\end{lemma}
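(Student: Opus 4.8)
The plan is to reduce the claim to the exact Gaussian identity valid on all of $\mathbb{R}$ and then show that the discarded tail over $I^c$ is super-polynomially small. First I would record the identity
\begin{align*}
\frac{1}{\sqrt{2\pi}}\int_{\mathbb{R}}e^{-\frac{x^2}{2}}e^{-x\alpha}\,dx=e^{\frac{\alpha^2}{2}},
\end{align*}
which follows by completing the square, $-\frac{x^2}{2}-x\alpha=-\frac{1}{2}(x+\alpha)^2+\frac{\alpha^2}{2}$, together with $\frac{1}{\sqrt{2\pi}}\int_{\mathbb{R}}e^{-\frac{1}{2}(x+\alpha)^2}\,dx=1$; for complex $\alpha$ the latter is justified by shifting the contour $\mathbb{R}\to\mathbb{R}-\mathrm{Re}(\alpha)$, which is legitimate because the integrand is entire and decays uniformly in any horizontal strip. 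Subtracting the truncated integral, the lemma becomes the single estimate
\begin{align*}
\left|\frac{1}{\sqrt{2\pi}}\int_{I^c}e^{-\frac{x^2}{2}}e^{-x\alpha}\,dx\right|=O(N^{-\infty}).
\end{align*}

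Next I would bound the integrand in modulus. Since $|e^{-x\alpha}|=e^{-x\,\mathrm{Re}(\alpha)}\le e^{|x|\,|\alpha|}\le e^{C|x|N^{\kappa}}$ and $I^c=\{|x|>N^{\kappa+\epsilon}\}$ is symmetric, the tail is at most
\begin{align*}
\frac{2}{\sqrt{2\pi}}\int_{N^{\kappa+\epsilon}}^{\infty}e^{-\frac{x^2}{2}+C N^{\kappa}x}\,dx.
\end{align*}
Completing the square once more, $-\frac{x^2}{2}+CN^{\kappa}x=-\frac{1}{2}(x-CN^{\kappa})^2+\frac{C^2N^{2\kappa}}{2}$, and substituting $y=x-CN^{\kappa}$ turns this into $\frac{2}{\sqrt{2\pi}}e^{\frac{C^2N^{2\kappa}}{2}}\int_{N^{\kappa+\epsilon}-CN^{\kappa}}^{\infty}e^{-y^2/2}\,dy$, which I would control with the elementary tail bound $\int_a^\infty e^{-y^2/2}\,dy\le e^{-a^2/2}$ valid for $a\ge 1$.

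The crux, and the only place where the hypotheses are genuinely used, is the competition between the prefactor $e^{C^2N^{2\kappa}/2}$ and the tail factor $e^{-(N^{\kappa+\epsilon}-CN^{\kappa})^2/2}$. Because $\epsilon>0$, for $N$ large one has $N^{\kappa+\epsilon}-CN^{\kappa}\ge \tfrac12 N^{\kappa+\epsilon}$, so the tail exponent is of order $N^{2\kappa+2\epsilon}$, which strictly dominates the prefactor exponent $N^{2\kappa}$; hence the product is bounded by $c'\exp(-c\,N^{2\kappa+2\epsilon})$ for suitable constants $c,c'>0$ independent of $N$ and of $\alpha$ in the admissible range. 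This decays faster than every power of $N$, i.e. it is $O(N^{-\infty})$, which completes the argument. I expect the main (and rather modest) obstacle to be purely the bookkeeping in this final step: tracking the shifted lower limit $N^{\kappa+\epsilon}-CN^{\kappa}$ and verifying that the $\epsilon$-margin really does separate the two exponential scales uniformly over $|\alpha|\le CN^{\kappa}$. There is no deeper analytic difficulty beyond the contour-shift justification of the base identity for complex $\alpha$.
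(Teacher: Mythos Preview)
Your proposal is correct and follows essentially the same route as the paper: start from the full-line Gaussian identity (via completing the square/contour shift), then bound the tail over $I^c$, complete the square again to reach $e^{C^2N^{2\kappa}/2}\int_{N^{\kappa+\epsilon}-CN^{\kappa}}^{\infty}e^{-y^2/2}\,dy$, and conclude by comparing the exponents $N^{2\kappa}$ versus $N^{2\kappa+2\epsilon}$. Your write-up is in fact slightly more explicit than the paper's at two points (the contour-shift justification for complex $\alpha$ and the direct modulus bound $|e^{-x\alpha}|\le e^{|x||\alpha|}$), but the underlying argument is the same.
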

Write the exponent in integral (\ref{firstt}) as sum of two squares, then we can implement the above lemma, to reduce its degree to two. Let $b^2=\frac{\theta}{2}$, $ I_1=[-N^{\frac{1}{2}-\kappa_1+\epsilon},N^{\frac{1}{2}-\kappa_1+\epsilon}]$ and $ I_2=[-N^{\frac{1}{2}-\kappa_2+\epsilon},N^{\frac{1}{2}-\kappa_2+\epsilon}]$, for some $\epsilon>0$.  Then on the region $\{|\gamma_N|\leq N^{-\kappa_1}, |\hat{\gamma}_N|\leq N^{-\kappa_2}\}$
\begin{align*}
&\exp\{-\theta N \gamma_N(\hat{\gamma}_N-v\gamma_N)\}\\
=&\exp\{-\frac{\theta}{2} \frac{(\sqrt{N}((1-v)\gamma_N+\hat{\gamma}_N))^2-(\sqrt{N}((1+v)\gamma_N-\hat{\gamma}_N))^2}{2}
\}\\
=&\frac{1}{(\sqrt{2\pi})^2}\int_{I_2}\int_{I_1} e^{-ibx_1\sqrt{N} ((1-v)\gamma_N+\hat{\gamma}_N)}e^{-bx_2\sqrt{N} ((1+v)\gamma_N-\hat{\gamma}_N)}
e^{-\frac{x_1^2}{2}}e^{-\frac{x_2^2}{2}}dx_1dx_2+O(N^{-\infty}).
\end{align*}
Plug it back into (\ref{firstt}), ignoring the $O(N^{-\infty})$ error,
\begin{align}
\notag&\int_{|\gamma_N|\leq N^{-\kappa_1},\atop |\hat{\gamma}_N|\leq N^{-\kappa_2}}\exp\{-\theta N\gamma_N(\hat{\gamma}_N-v\gamma_N)\}
\prod_{i=1}^{N}dP(\tilde{g}_i)\\
\notag=&\int_{|\gamma_N|\leq N^{-\kappa_1},\atop |\hat{\gamma}_N|\leq N^{-\kappa_2}} \frac{1}{(\sqrt{2\pi})^2}\left\{\int_{I_2}\int_{I_1} e^{-ibx_1\sqrt{N} ((1-v)\gamma_N+\hat{\gamma}_N)}e^{-bx_2\sqrt{N} ((1+v)\gamma_N-\hat{\gamma}_N)}
e^{-\frac{x_1^2}{2}}e^{-\frac{x_2^2}{2}}dx_1dx_2\right\}\prod_{i=1}^{N} dP(\tilde{g}_i)\\
\label{eq5}=&\frac{1}{(\sqrt{2\pi})^2}\int_{I_1\times I_2}e^{ibx_1\sqrt{N}}e^{bx_2\sqrt{N}} \left\{\int_{|{\gamma}_N|\leq N^{-\kappa_1}\atop |\hat{\gamma}_N|\leq N^{-\kappa_2}}\prod_{i=1}^{N}\frac{1}{\sqrt{2\pi}}e^{-\frac{\tilde{g}_i^2}{2}
\left(1+\frac{2b(i(1-v+\lambda_i) x_1+(1+v-\lambda_i)x_2)}{\sqrt{N}(1-2\theta \lambda_i+2\theta v)}\right)}\prod_{i=1}^{N}d\tilde{g}_i\right\}\prod_{i=1}^{2}e^{-\frac{x_i^2}{2}}dx_i.
\end{align}

Notice here in the inner integral, the integral domain is the region $\mathcal{D}=\{|\gamma_N|\leq N^{-\kappa_1}, |\hat{\gamma}_N|\leq N^{-\kappa_2}\}$ and the Gaussian variables $\tilde{g}_i$ are located in this region with overwhelming probability. It is highly likely that if we instead integrate over the whole space $\mathbb{R}^{N}$, the error is exponentially small. We will first compute the integral under the belief that the integral outside this region $\mathcal{D}$ is negligible, then come back to this point later. Replace the integral region $\mathcal{D}$ by $\mathbb{R}^N$,
\begin{align}
\label{pr4}&e^{ibx_1\sqrt{N}}e^{bx_2\sqrt{N}} \left\{\int_{\mathbb{R}^N}\prod_{i=1}^{N}\frac{1}{\sqrt{2\pi}}e^{-\frac{\tilde{g}_i^2}{2}\left(1+\frac{2b(i(1-v+\lambda_i) x_1+(1+v-\lambda_i)x_2)}{\sqrt{N}(1-2\theta \lambda_i+2\theta v)}\right)}\prod_{i=1}^{N}d\tilde{g}_i\right\}\\
\notag=&\prod_{i=1}^{N} \frac{\exp\{\frac{b(i(1-v+\lambda_i) x_1+(1+v-\lambda_i)x_2)}{\sqrt{N}(1-2\theta \lambda_i+2\theta v)}\}}{\sqrt{1+\frac{2b(i(1-v+\lambda_i) x_1+(1+v-\lambda_i)x_2)}{\sqrt{N}(1-2\theta \lambda_i+2\theta v)}}}.
\end{align}

Let $\mu_i=\frac{b(i(1-v+\lambda_i) x_1+(1+v-\lambda_i)x_2)}{\sqrt{N}(1-2\theta \lambda_i+2\theta v)}$. Then $\mu_i=O(N^{\epsilon-\kappa_2})$, where $0<\epsilon<\kappa_2$. So we have the Taylor expansion
\begin{align*}
\prod_{i=1}^{N}\frac{e^{\mu_i}}{\sqrt{1+2\mu_i
}}
=\prod_{i=1}^{N}e^{\mu_i
^2+o(\mu_i^2)}
=\prod_{i=1}^{N}e^{\mu_i
^2}(1+o(\sum_{i=1}^{N}\mu_i^2)),
\end{align*}
later we will see $\int e^{\sum_{i=1}^{N}\mu^2}\sum_{i=1}^{N}\mu_i^2dP(x_1)dP(x_2)$=O(1). Thus, the first term in the asymptotics of integral (\ref{eq5}) comes from the integral of $e^{\sum_{i=1}^{N}\mu_i^2}$, which is
\begin{align}
\label{pre5}\frac{1}{(\sqrt{2\pi})^2}\int_{I_2}\int_{I_1}\exp\left\{\frac{\theta}{2v} \sum_{i=1}^{N}\frac{\left(i(1-v+\lambda_i)x_1+(1+v-\lambda_i)x_2\right)^2}{(1-2\theta\lambda_i+2\theta v)^2}-\frac{x_1^2+x_2^2}{2}\right\}dx_1dx_2.
\end{align}
The exponent is a quadratic form, and can be written as $-\frac{1}{2}\langle x, Kx\rangle$ where $K$ is the following $2\times 2$ symmetric matrices,
\begin{align*}
K=
 \left[ \begin{array}{cc}
1+\theta\left((1-v)^2A_2+2(1-v)F+G\right) & -\theta i\left((1-v^2)A_2+2vF-G\right) \\
-\theta i\left((1-v^2)A_2+2vF-G\right) & 1-\theta\left((1+v)^2A_2-2(1+v)F+G\right)  \end{array} \right]
\end{align*}
where $A_2$, $F$, $G$ are respectively $\frac{1}{N}\sum_{i=1}^{N}\frac{1}{(1-2\theta \lambda_i+2\theta v)^2}$, $\frac{1}{N}\sum_{i=1}^{N}\frac{\lambda_i}{(1-2\theta \lambda_i+2\theta v)^2}$ and $\frac{1}{N}\sum_{i=1}^{N}\frac{\lambda_i^2}{(1-2\theta \lambda_i+2\theta v)^2}$. With this notation, the above integral (\ref{pre5}) can be rewritten as
\begin{align}
\label{pre6}\frac{1}{(\sqrt{2\pi})^2}\int_{I_2}\int_{I_1} e^{-\frac{1}{2}\langle x, Kx\rangle}dx_1dx_2.
\end{align}

To deal with this complex Gaussian integral, we have the following lemma
\begin{lemma}\label{CGI}
If an $n$ by $n$ symmetric matrix $K$ can be written as $K=A+iB$, where $A$ is a real positive definite matrix, $B$ is a real symmetric matrix. Then we have the Gaussian integral formula
\begin{align*}
\frac{1}{(\sqrt{2\pi})^n}\int_{\mathbb{R}^n} e^{-\frac{1}{2}\langle x, Kx\rangle}dx=\frac{1}{\sqrt{det(K)}}.
\end{align*}
\begin{proof}
Since $A$ is positive definite, there exists a positive definite matrix $C$ such that $A=C^TC$. Since $CBC^{T}$ is symmetric, it can be diagonalized by some special orthogonal matrix $U$, let $P=UC$. Then we have $A=P^TP$ and $B=P^T D P$, where $D=diag\{d_1,d_2,\cdots, d_n\}$. Thus $K=A+iB=P^T(I+iD)P$. Plug this back to the integral
\begin{align}
\notag\frac{1}{(\sqrt{2\pi})^n}\int_{\mathbb{R}^N} e^{-\frac{1}{2}\langle x, Kx\rangle}dx=&\frac{1}{(\sqrt{2\pi})^n}\int_{\mathbb{R}^N} e^{-\frac{1}{2}\langle x, P^T(I+iD)P x\rangle}dx\\
\notag=&\frac{1}{(\sqrt{2\pi})^n \det(P)}\int_{\mathbb{R}^N} e^{-\frac{1}{2}\langle y, (I+iD)y\rangle}dy
=\frac{1}{(\sqrt{2\pi})^n\det(P)}\prod_{k=1}^{n}\int_{\mathbb{R}^N} e^{-\frac{(1+id_k)}{2}y_k^2}dy_k\\
\label{eq6}=&\frac{1}{(\sqrt{2\pi})^n\det(P)}\prod_{k=1}^{n}\sqrt{\frac{2\pi}{1+id_k}}=\frac{1}{\sqrt{\det(K)}},
\end{align}
where the square root in (\ref{eq6}) is the branch with positive real part, in our case $K$ is a $2\times 2$ matrix.
\end{proof}
\end{lemma}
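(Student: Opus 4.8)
The plan is to reduce the complex quadratic form to a product of one-dimensional integrals by a single \emph{real} linear change of variables that simultaneously normalizes $A$ and diagonalizes $B$, and then to evaluate each resulting one-dimensional complex Gaussian integral by analytic continuation.

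First I would use the positive definiteness of $A$ to factor $A = C^T C$ with $C$ real and invertible (for instance via a Cholesky or symmetric-square-root factorization, arranged so that $\det C > 0$). The matrix $M = (C^{-1})^T B C^{-1}$ is then real symmetric, hence diagonalized by a special orthogonal matrix $U$, say $U M U^T = D = \mathrm{diag}(d_1,\ldots,d_n)$; taking $U \in SO(n)$ is harmless since eigenvectors may be rescaled by $\pm 1$. Setting $P = UC$, one checks directly that $P^T P = A$ and $P^T D P = B$, so that $K = A + iB = P^T(I + iD)P$ and $\det K = (\det P)^2 \prod_{k}(1 + i d_k)$ with $\det P = \det C > 0$.

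Next, the change of variables $y = Px$ (Jacobian $\det P > 0$) turns the integral into $\frac{1}{(\sqrt{2\pi})^n \det P}\int_{\mathbb{R}^n} e^{-\frac12 \langle y, (I+iD)y\rangle}\,dy$, which factors across coordinates into a product of one-dimensional integrals $\int_{\mathbb{R}} e^{-\frac{1+id_k}{2}y^2}\,dy$. The analytic heart of the argument is the one-dimensional identity $\int_{\mathbb{R}} e^{-\frac{a}{2}y^2}\,dy = \sqrt{2\pi/a}$ for every $a$ with $\mathrm{Re}(a) > 0$, taking the principal (positive-real-part) branch of the root. I would establish this by observing that the left-hand side is holomorphic on the right half-plane (dominated convergence gives continuity, and differentiation under the integral sign gives holomorphy) and coincides with the elementary real Gaussian value on the positive real axis, so the two holomorphic functions agree throughout the half-plane by the identity theorem. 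Since $\mathrm{Re}(1+id_k) = 1 > 0$, each factor equals $\sqrt{2\pi/(1+id_k)}$, and collecting the constants yields the value $\frac{1}{\det P \,\prod_{k}\sqrt{1+id_k}}$.

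Finally I would reconcile this with $1/\sqrt{\det K}$. Writing $W = \det P \,\prod_{k}\sqrt{1+id_k}$, one has $W^2 = (\det P)^2 \prod_{k}(1+id_k) = \det K$, so the computed value is $1/W$ for a definite square root $W$ of $\det K$. The delicate point, which I expect to be the main obstacle, is verifying that $W$ is indeed the branch with positive real part. Since $\arg(W) = \sum_{k}\tfrac12 \arg(1+id_k)$ is a sum of $n$ terms each lying in $(-\pi/4,\pi/4)$, this total argument stays within $(-\pi/2,\pi/2)$ precisely when $n \le 2$, and in that range $\mathrm{Re}(W) > 0$ automatically, matching the asserted branch; this is exactly the case $n = 2$ relevant to the application. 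For larger $n$ the product of principal roots of the factors and the naive principal root of $\det K$ can differ by a sign, so the correct reading of $\sqrt{\det K}$ is the branch obtained by continuation of $\prod_{k}\sqrt{1+id_k}$, and I would phrase the branch statement accordingly.
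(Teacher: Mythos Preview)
Your proof is correct and follows exactly the paper's route: write $K = P^T(I+iD)P$ via simultaneous normalization of $A$ and diagonalization of $B$, change variables $y = Px$, and evaluate the resulting product of one-dimensional complex Gaussians on the principal branch. You are in fact more careful than the paper on two points---you give the right matrix $(C^{-1})^T B\,C^{-1}$ to diagonalize (the paper's ``$CBC^T$'' is a slip), and you spell out both the analytic-continuation argument for the one-dimensional integral and the $n\le 2$ branch caveat that the paper only alludes to in its final line.
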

Now we need to verify that our matrix $K$ satisfies the condition of Lemma \ref{CGI}. Write $K$ as the following sum
\begin{align*}
K&=
 \left[ \begin{array}{cc}
1+\theta\left((1-v)^2A_2+2(1-v)F+G\right) & 0 \\
0 & 1-\theta\left((1+v)^2A_2-2(1+v)F+G\right)  \end{array} \right]
\\&+
 i\left[ \begin{array}{cc}
0 & -\theta \left((1-v^2)A_2+2vF-G\right) \\
-\theta \left((1-v^2)A_2+2vF-G\right) & 0  \end{array} \right]
\end{align*}
Since $\min\{\lambda_i\}\leq v(\theta) \leq \max\{\lambda_i\}$, $|v(\theta)|<\max|\lambda_i|<M$. If $\theta<\frac{1}{4M^2+10M+1}$, it is easy to check the real part of matrix $K$ is positive definite. To use lemma \ref{CGI}, the only thing is to compute the determinant of matrix $K$. Notice the algebraic relations between parameters $A_2$, $F$, $G$, $\det(K)=A_2$. Therefore we obtain the first order asymptotic of the spherical integral,
\begin{align*}
e^{-\frac{N}{2}\int_0^{2\theta} R_{B_N}(s)ds}I_N(\theta, B_N)
=\frac{1}{(\sqrt{2\pi})^2}\int_{I_2}\int_{I_1} e^{-\frac{1}{2}\langle x , Kx\rangle}(1+o(1))dx=\frac{1}{\sqrt{A_2}}+o(1).
\end{align*}

Go back to integral (\ref{eq5}), we need to prove that the integral outside the region $\mathcal{D}$ is of order $O(N^{-\infty})$, then replacing the integral domain $\mathcal{D}=\{|\hat{\gamma}_N|\leq N^{-\kappa_1}, |\gamma_N|\leq N^{-\kappa_2}\}$ by the whole space $\mathbb{R}^N$ won't affect the asymptotic expansion.
\begin{lemma}
Consider the integral (\ref{eq5}) on the complement of $\mathcal{D}$, i.e. on $\{|\hat{\gamma}_N|\geq N^{-\kappa_1}\text{ or  }|\gamma_N|\geq N^{-\kappa_2}\}$,
\begin{align*}
R=&\Big{|}\int_{I_2}\int_{I_1}e^{ibx_1\sqrt{N}+bx_2\sqrt{N}} \{\int_{|\hat{\gamma}_N|\geq N^{-\kappa_1}\atop \text{or }|\gamma_N|\geq N^{-\kappa_2}}\prod_{i=1}^{N}\frac{1}{\sqrt{2\pi}}e^{-\frac{\tilde{g}_i^2}{2}\left(1+\frac{2b(i(1-v+\lambda_i) x_1+(1+v-\lambda_i)x_2)}{\sqrt{N}(1-2\theta \lambda_i+2\theta v)}\right)}\prod_{i=1}^{N}d\tilde{g}_i\}e^{-\frac{x_1^2}{2}}e^{-\frac{x_2^2}{2}}dx_1dx_2\Big{|}.
\end{align*}
Then for $N$ large enough, the error $R\leq c'e^{-cN^{\zeta}}$, for some constant $c,c',\zeta>0$ depending on $\kappa_1$, $\kappa_2$ and $\|B_N\|_{\infty}$.
\end{lemma}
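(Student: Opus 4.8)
The plan is to pass to absolute values inside the double integral defining $R$, to use that the real part of the exponent of the inner Gaussians depends on $x_2$ only, and thereby to reduce the inner integration to a Gaussian measure of $\mathcal D^c$. I assume throughout that $\theta>0$, so that $b=\sqrt{\theta/2}$ is real; the case $\theta<0$ is entirely symmetric under the interchange of $x_1$ and $x_2$. Since $|e^{ibx_1\sqrt N+bx_2\sqrt N}|=e^{bx_2\sqrt N}$ and $\mathrm{Re}\,\mu_i=\frac{b(1+v-\lambda_i)x_2}{\sqrt N(1-2\theta\lambda_i+2\theta v)}$ is independent of $x_1$, the triangle inequality together with the coordinate rescaling $h_i=\tilde g_i\sqrt{1+2\mathrm{Re}\,\mu_i}$ gives
\[
R\ \le\ C\int_{I_2} e^{bx_2\sqrt N}\Big(\prod_{i=1}^N(1+2\mathrm{Re}\,\mu_i)^{-1/2}\Big)e^{-x_2^2/2}\;\mathbb P_{\nu_{x_2}}(\mathcal D^c)\,dx_2 ,
\]
where $\mathbb P_{\nu_{x_2}}(\mathcal D^c)$ denotes the standard Gaussian probability of the set $\mathcal D^c$ read in the rescaled coordinates $h_i$, and the $x_1$-integral has contributed only the harmless factor $\frac1{\sqrt{2\pi}}\int_{I_1}e^{-x_1^2/2}\,dx_1\le 1$.

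The heart of the matter is the exact cancellation produced by the defining relation $\frac1N\sum_i\frac1{1-2\theta\lambda_i+2\theta v}=1$, which yields $\sum_i\mathrm{Re}\,\mu_i=bx_2\sqrt N$. Hence $\prod_i(1+2\mathrm{Re}\,\mu_i)^{-1/2}=\exp\{-bx_2\sqrt N+\sum_i(\mathrm{Re}\,\mu_i)^2+O(\sum_i|\mathrm{Re}\,\mu_i|^3)\}$, and since $\sum_i(\mathrm{Re}\,\mu_i)^2=b^2Sx_2^2$ with $S=\frac1N\sum_i\frac{(1+v-\lambda_i)^2}{(1-2\theta\lambda_i+2\theta v)^2}=(1+v)^2A_2-2(1+v)F+G$, the integrand satisfies $e^{bx_2\sqrt N}\big(\prod_i(1+2\mathrm{Re}\,\mu_i)^{-1/2}\big)e^{-x_2^2/2}\le\exp\{-(\tfrac12-b^2S(1+o(1)))x_2^2\}$. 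Now $K_{22}=1-\theta S$, and $\mathrm{Re}\,K$ is positive definite under the hypothesis $|\theta|<\frac1{4M^2+10M+1}$, as used in the proof of Theorem \ref{rf}; in particular $\theta S<1$, i.e. $b^2S<\tfrac12$, so $c_0:=\tfrac12-b^2S(1+o(1))>0$ uniformly for $N$ large, and $R\le C\int_{I_2}e^{-c_0x_2^2}\,\mathbb P_{\nu_{x_2}}(\mathcal D^c)\,dx_2$.

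It remains to split $I_2$ at $|x_2|=cN^{1/2-\kappa_1}$ for a suitably small constant $c>0$. For $|x_2|\le cN^{1/2-\kappa_1}$ one has $\max_i|\mathrm{Re}\,\mu_i|=O(N^{-\kappa_1})$, so under $\nu_{x_2}$ the mean of $\gamma_N$ is shifted from $0$ by at most $\tfrac12N^{-\kappa_1}$ and that of $\hat\gamma_N$ by $o(N^{-\kappa_2})$ (here $\kappa_1>\kappa_2$). Applying the moment-generating-function argument underlying Lemma \ref{CMI} to the centred weighted-$\chi^2$ sums $\gamma_N-\mathbb E_{\nu_{x_2}}\gamma_N$ and $\hat\gamma_N-\mathbb E_{\nu_{x_2}}\hat\gamma_N$, whose coefficients are $O(1/N)$, yields $\mathbb P_{\nu_{x_2}}(\mathcal D^c)\le Ce^{-c'N^{1-2\kappa_1}}+Ce^{-c'N^{1-2\kappa_2}}\le Ce^{-c'N^{1-2\kappa_1}}$, and this range of $x_2$ contributes at most $Ce^{-c'N^{1-2\kappa_1}}\int_{\mathbb R}e^{-c_0x_2^2}\,dx_2$. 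For $cN^{1/2-\kappa_1}<|x_2|\le N^{1/2-\kappa_2+\epsilon}$ I simply bound $\mathbb P_{\nu_{x_2}}(\mathcal D^c)\le1$ and use $\int_{|x_2|>cN^{1/2-\kappa_1}}e^{-c_0x_2^2}\,dx_2\le Ce^{-c_0c^2N^{1-2\kappa_1}}$. Combining, $R\le C'e^{-cN^{1-2\kappa_1}}$, which is the asserted bound with $\zeta=1-2\kappa_1>0$.

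The main obstacle is precisely the middle step: on $I_2$ the factor $e^{bx_2\sqrt N}$ grows as fast as $e^{|b|N^{1-\kappa_2+\epsilon}}$, which — since $1-\kappa_2+\epsilon>1-2\kappa_1$ under the standing assumptions $\kappa_1>2\kappa_2$ — outruns every concentration bound of the form $e^{-cN^{1-2\kappa_1}}$; any estimate that discards $e^{bx_2\sqrt N}$ crudely, or that replaces $I_2$ by $\mathbb R$ in the $\tilde g$-integral before accounting for it, is doomed. The resolution is that $e^{bx_2\sqrt N}$ is exactly the reciprocal of the leading part of the normalisation of the $x_2$-tilted inner Gaussian, so it cancels and leaves behind only a Gaussian residue in $x_2$; whether that residue decays — i.e. whether $c_0>0$ — is exactly the content of the hypothesis $|\theta|<\frac1{4M^2+10M+1}$, via the positivity of $K_{22}=1-\theta S$.
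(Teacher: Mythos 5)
Your proof is correct and follows essentially the same route as the paper: drop to absolute values so that only the $x_2$-dependence survives, rescale $\tilde g_i\mapsto h_i$, split the $x_2$-range, apply the concentration inequality of Lemma \ref{CMI} (with the small mean shift controlled) on the bulk, and use Gaussian decay on the tail. The only difference is cosmetic: you spell out explicitly the cancellation $\sum_i\mathrm{Re}\,\mu_i=bx_2\sqrt N$ and the role of $K_{22}=1-\theta S>0$ in making the residual $x_2$-integral convergent, where the paper simply invokes ``the same argument as (\ref{pr4})''; your choice of split point $cN^{1/2-\kappa_1}$ versus the paper's $N^{\epsilon}$ is immaterial.
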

\begin{proof}
\begin{align*}
R\leq \int_{I_2}\int_{I_1}e^{\Re\{ibx_1\sqrt{N}+bx_2\sqrt{N}\}} \{
\int_{|\hat{\gamma}_N|\geq N^{-\kappa_1}\atop \text{or }|\gamma_N|\geq N^{-\kappa_2}}\prod_{i=1}^{N}\frac{1}{\sqrt{2\pi}}e^{-\Re\left\{\frac{\tilde{g}_i^2}{2}\left(1+\frac{2b(i(1-v+\lambda_i) x_1+(1+v-\lambda_i)x_2)}{\sqrt{N}(1-2\theta \lambda_i+2\theta v)}\right)\right\}}\prod_{i=1}^{N}d\tilde{g}_i\}e^{-\frac{x_1^2}{2}}e^{-\frac{x_2^2}{2}}dx_1dx_2.
\end{align*}
Since $b=\sqrt{\frac{\theta}{2}}$, either $b$ is real or imaginary. For simplicity here we only discuss the case when $b$ is real. The case when $b$ is imaginary can be proved in the same way. The above integral can be simplified as
\begin{align*}
R\leq \int_{I_2}e^{bx_2\sqrt{N}} \left\{\int_{|\hat{\gamma}_N|\geq N^{-\kappa_1}\atop \text{or }|\gamma_N|\geq N^{-\kappa_2}}\prod_{i=1}^{N}\frac{1}{\sqrt{2\pi}}
e^{-\frac{\tilde{g}_i^2}{2}\left(1+\frac{2b(1+v-\lambda_i)x_2}{\sqrt{N}(1-2\theta \lambda_i+2\theta v)}\right)}\prod_{i=1}^{N}d\tilde{g}_i\right\}e^{-\frac{x_2^2}{2}}dx_2.
\end{align*}
To simplify it, we change the measure. Let
$h_i=\tilde{g}_i\sqrt{1+\frac{2b(1+v-\lambda_i)x_2}{\sqrt{N}(1-2\theta \lambda_i+2\theta v)}}$,
\begin{align}
\label{eq10}R\leq\int_{I_2}\frac{e^{bx_2\sqrt{N}}}{\prod_{i=1}^{N}\sqrt{1+\frac{2b(1+v-\lambda_i)x_2}{\sqrt{N}(1-2\theta \lambda_i+2\theta v)}}}P_h(\mathcal{D}^c) e^{-\frac{x_2^2}{2}}dx_2.
\end{align}
Here $P_h(\cdot)$ is the Gaussian measure of $\{h_i\}_{i=1}^{N}$. Take $0<\epsilon<\frac{1}{2}-\kappa_1$, we can separate the above integral into two parts,
\begin{align}
\label{eq10}R\leq\underbrace{\int_{[-N^{\epsilon},N^{\epsilon}]}\frac{e^{bx_2\sqrt{N}}}{\prod_{i=1}^{N}\sqrt{1+\frac{2b(1+v-\lambda_i)x_2}{\sqrt{N}(1-2\theta \lambda_i+2\theta v)}}}P_h(\mathcal{D}^c) e^{-\frac{x_2^2}{2}}dx_2}_{E_1}+\underbrace{\int_{I_2\cap[-N^{\epsilon},N^{\epsilon}]^c}\frac{e^{bx_2\sqrt{N}}}{\prod_{i=1}^{N}\sqrt{1+\frac{2b(1+v-\lambda_i)x_2}{\sqrt{N}(1-2\theta \lambda_i+2\theta v)}}} e^{-\frac{x_2^2}{2}}dx_2}_{E_2}.
\end{align}

For $E_2$, it is of the same form as (\ref{pr4}). Use the same argument, the main contribution from $E_2$ is a Gaussian integral on $[-N^{\epsilon},N^{\epsilon}]^c$. So it stretched exponential decays, $E_2\leq c'e^{-cN^{\zeta}}$. For $E_1$,
\begin{align*}
E_1\leq\int\frac{e^{bx_2\sqrt{N}}}{\prod_{i=1}^{N}\sqrt{1+\frac{2b(1+v-\lambda_i)x_2}{\sqrt{N}(1-2\theta \lambda_i+2\theta v)}}} e^{-\frac{x_2^2}{2}}dx_2\sup_{x_2\in [-N^{\epsilon},N^{\epsilon}]}\{P_h(\mathcal{D}^c) \}.
\end{align*}
If we can show that on the interval $[-N^{\epsilon}, N^{\epsilon}]$, $P_h(\mathcal{D}^c)$ is uniformly exponentially small, independent of $x_2$, then $E_1$ is exponentially small. For the upper bound of $P_h(\mathcal{D}^c)$, first by the union bound,
\begin{align*}
P_h(\mathcal{D}^c)\leq P_h(|\gamma_N|>N^{-\kappa_1})+P_h(|\hat{\gamma}_N|>N^{-\kappa_2}).
\end{align*}
For simplicity here I will only bound the first term, the second term can be bounded in exactly the same way.
\begin{align}
\notag P_h(|\gamma_N|>N^{-\kappa_1})=&P_h(|\frac{1}{N}\sum_{i=1}^{N}\frac{h_i^2}{1-2\lambda_i\theta+2\theta v+ \frac{2bx_2(1+v-\lambda_i)}{\sqrt{N}}}-1|\geq N^{-\kappa_1})\\
\label{defv}=&P_h(|\frac{1}{N}\sum_{i=1}^{N}\frac{h_i^2-1}{1-2\lambda_i\theta+2\theta v+ \frac{2bx_2(1+v-\lambda_i)}{\sqrt{N}}}+O(N^{\epsilon-\frac{1}{2}})|\geq N^{-\kappa_1})\\
\label{in1}\leq& c'e^{-cN^{\zeta}},
\end{align}
For (\ref{defv}), notice the definition of $v$, we have $\frac{1}{N}\sum_{i=1}^{N}\frac{1}{1-2\lambda_i\theta+2v\theta}=1$.
And for the last inequality, since $\epsilon-\frac{1}{2}<-\kappa_2$, the term $O(N^{\epsilon-\frac{1}{2}})$ is negligible compared with $N^{-\kappa_1}$. Then the concentration measure inequality in Lemma \ref{CMI} implies (\ref{in1}).
\end{proof}

\subsection{Higher Order Expansion}

To compute the higher order expansion of the spherical integral $I_N(\theta,B_N)$, we need to obtain a full asymptotic expansion of (\ref{eq4}).

\begin{align}
\label{exp3.0}&\int_{|\gamma_N|\leq N^{-\kappa_1},\atop |\hat{\gamma}_N|\leq N^{-\kappa_2}}\exp\{-\theta N\gamma_N\frac{\hat{\gamma}_N-v\gamma_N}{\gamma_N+1}\}
\prod_{i=1}^{i=N}dP(\tilde{g}_i)\\
\notag=&\int_{|\gamma_N|\leq N^{-\kappa_1}, \atop |\hat{\gamma}_N|\leq N^{-\kappa_2}}\exp\{-\theta N \gamma_N(\hat{\gamma}_N-v\gamma_N)\}\exp\{\theta N\gamma_N^2\frac{\hat{\gamma}_N-v\gamma_N}{\gamma_N+1}\}\prod_{i=1}^{N}dP(\tilde{g}_i)\\
\notag=&\int_{|\gamma_N|\leq N^{-\kappa_1}, \atop |\hat{\gamma}_N|\leq N^{-\kappa_2}}\exp\{-\theta N \gamma_N(\hat{\gamma}_N-v\gamma_N)\}\left\{\sum_{k=0}^{\infty}\frac{1}{k!}(\theta N\gamma_N^2\frac{\hat{\gamma}_N-v\gamma_N}{\gamma_N+1})^k\right\}\prod_{i=1}^{N}dP(\tilde{g}_i)\\
\label{exp2.1}=&\int_{|\gamma_N|\leq N^{-\kappa_1}, \atop |\hat{\gamma}_N|\leq N^{-\kappa_2}}\exp\{-\theta N \gamma_N(\hat{\gamma}_N-v\gamma_N)\}\prod_{i=1}^{N}dP(\tilde{g}_i)+\sum_{l=1}^{+\infty}
\int_{|\gamma_N|\leq N^{-\kappa_1}, \atop |\hat{\gamma}_N|\leq N^{-\kappa_2}}\exp\{-\theta N \gamma_N(\hat{\gamma}_N-v\gamma_N)\}\\
\notag&\times\left\{\sum_{k=1}^{l}{l-1\choose k-1}\frac{(-1)^k\theta^k}{k!}(-\gamma_N)^{l}(\sqrt{N}\gamma_N)^k\left(\sqrt{N}(\hat{\gamma}_N-v\gamma_N)\right)^k\right\}\prod_{i=1}^{N}dP(\tilde{g}_i)
\end{align}
We consider the $l$-th summand in the expression (\ref{exp2.1}). As proved in theorem $3$ of \cite{GuMa}, the distribution of $(\sqrt{N}\gamma_N, \sqrt{N}\hat{\gamma}_N)$ tends to $\Gamma$, which is centered two-dimensional Gaussian measure on $\mathbb{R}^{2}$ with covariance matrix
\begin{align*}
R=2
 \left[ \begin{array}{cc}
A_2 & F\\
F & G \end{array} \right]
\end{align*}
Moreover, the matrix $R$ is non-degenerate. Therefore, by the central limit theorem, we can obtain the asymptotic expression of the $l$-th term in (\ref{exp2.1})
\begin{align*}
&\int_{|\gamma_N|\leq N^{-\kappa_1}, \atop |\hat{\gamma}_N|\leq N^{-\kappa_2}}\exp\{-\theta N \gamma_N(\hat{\gamma}_N-v\gamma_N)\}\left\{\sum_{k=1}^{l}{l-1\choose k-1}\frac{(-1)^k\theta^k}{k!}(-\gamma_N)^{l}(\sqrt{N}\gamma_N)^k\left(\sqrt{N}(\hat{\gamma}_N-v\gamma_N)\right)^k\right\}\prod_{i=1}^{N}dP(\tilde{g}_i)\\
=&(\frac{-1}{\sqrt{N}})^l\left\{\int\exp\{-\theta N x(y-vx)\}\left\{\sum_{k=1}^{l}{l-1\choose k-1}\frac{(-1)^k\theta^k}{k!}x^{l+k}(y-vx)^k\right\}d\Gamma(x,y)+o(1)\right\}
=O(N^{-\frac{l}{2}})
\end{align*}
Therefore if we cut off the infinite sum (\ref{exp2.1}) at the $l$-th term, then the error terms are of magnitude $O(N^{-\frac{l+1}{2}})$. To obtain the full expansion, we need to understand each term in the expansion (\ref{exp2.1}).
\begin{align}
\label{eq2.2}\int_{|\gamma_N|\leq N^{-\kappa_1}, \atop |\hat{\gamma}_N|\leq N^{-\kappa_2}}\exp\{-\theta N \gamma_N(\hat{\gamma}_N-v\gamma_N)\}\left\{{l-1\choose k-1}\frac{(-1)^k\theta^k}{k!}(-\gamma_N)^{l}N^k(\gamma_N)^k(\hat{\gamma}_N-v\gamma_N)^k\right\}\prod_{i=1}^{N}dP(\tilde{g}_i).
\end{align}
Define
\begin{align}
\label{key}f_l(t)=\int_{|\gamma_N|\leq N^{-\kappa_1}, \atop |\hat{\gamma}_N|\leq N^{-\kappa_2}}\exp\{-t N \gamma_N(\hat{\gamma}_N-v\gamma_N)\}\gamma_N^l\prod_{i=1}^{N}dP(\tilde{g}_i).
\end{align}
By the dominated convergence theorem, we can interchange derivative and integral. The integral (\ref{eq2.2}) can be written as
\begin{align*}
(-1)^{l}{l-1\choose k-1}\frac{\theta^k}{k!}\frac{d^kf_l(t)}{dt^k}\Big{|}_{t=\theta}
\end{align*}
Therefore to understand the asymptotic expansion of (\ref{eq2.2}), we only need to compute the asymptotic expansion of $f_l(t)$, for $l=0,1,2,\cdots$. We have the following proposition
\begin{proposition}\label{pp2}
If $\sup_{N}\|B_N\|_{\infty}<M$, then for any $\theta\in \mathbb{R}$ such that $|\theta|<\frac{1}{4M^2+10M+1}$, $f_l$ has the following asymptotic expansion (up to $O(\frac{1}{N^{n+1}})$ for any given $n$)
\begin{align}
\label{exp3.1}f_l(t)=m_0+\frac{m_1}{N}+\frac{m_2}{N^2}+\cdots +\frac{m_n}{N^n}+O(\frac{1}{N^{n+1}})
\end{align}
where $\{m_i\}_{i=0}^{n}$ depends explicitly on $t$, $\theta$, $v$ and the derivative of the Hilbert transform of the empirical spectral distribution of $B_N$, namely, $A_2,A_3,A_4\cdots A_{2n+2}$.
\end{proposition}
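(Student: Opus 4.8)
The plan is to run the argument of Theorem~\ref{rf} with the extra polynomial factor $\gamma_N^l$ carried along, pushing every estimate one order further in $1/\sqrt N$. First I would reduce the integral defining $f_l(t)$ from the domain $\mathcal D=\{|\gamma_N|\le N^{-\kappa_1},\ |\hat\gamma_N|\le N^{-\kappa_2}\}$ to all of $\mathbb R^N$: on $\mathcal D$ one has $|\gamma_N^l|\le N^{-l\kappa_1}$, so the truncation error is $O(N^{-\infty})$ by the same reasoning used for Proposition~\ref{proposition1} and for the lemma controlling the integral outside $\mathcal D$, which rests on Lemma~\ref{CMI}. Next, writing $u=(1-v)\gamma_N+\hat\gamma_N$ and $w=(1+v)\gamma_N-\hat\gamma_N$, one has the identity $\gamma_N(\hat\gamma_N-v\gamma_N)=\tfrac14(u^2-w^2)$, with $u,w=O(N^{-\kappa_2})$ on $\mathcal D$; setting $b^2=t/2$ (possibly imaginary) and applying the Gaussian-linearization Lemma to each of $u^2$ and $w^2$ introduces two auxiliary variables $x_1,x_2$ and replaces the $\tilde g_i$-integrand by a genuine Gaussian weight $\prod_i\frac1{\sqrt{2\pi}}e^{-\frac{\tilde g_i^2}2(1+2\nu_i)}$, where $\nu_i=\nu_i(x_1,x_2)=O(N^{-1/2})$ uniformly for $x_1\in I_1,\ x_2\in I_2$, all at the cost of $O(N^{-\infty})$.

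The second step is to evaluate the inner $\tilde g$-integral exactly. Since $\gamma_N=\frac1N\sum_i\frac{\tilde g_i^2}{1-2\theta\lambda_i+2\theta v}-1$ is affine in the $\tilde g_i^2$, I would write $\gamma_N^l=\partial_\rho^l e^{\rho\gamma_N}\big|_{\rho=0}$ and interchange $\partial_\rho^l$ with the product Gaussian integral; before differentiation this integral equals the explicit function $\prod_i\big(1+2\nu_i-\tfrac{2\rho}{N(1-2\theta\lambda_i+2\theta v)}\big)^{-1/2}\exp\{\tfrac{\rho}{N(1-2\theta\lambda_i+2\theta v)}\}\cdot e^{-\rho}$. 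Multiplying by the prefactor $e^{ibx_1\sqrt N}e^{bx_2\sqrt N}$ produced by the linearization lemma and differentiating $l$ times in $\rho$ at $\rho=0$ yields an explicit $\Phi_N^{(l)}(x_1,x_2)$, and $f_l(t)=\frac1{2\pi}\int_{I_1\times I_2}\Phi_N^{(l)}(x_1,x_2)\,e^{-\frac{x_1^2}2-\frac{x_2^2}2}\,dx_1\,dx_2+O(N^{-\infty})$.

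The third step is the asymptotic analysis of $\Phi_N^{(l)}$. Exactly as in the proof of Theorem~\ref{rf}, Taylor-expanding the logarithm of the product over $i$ in powers of $1/\sqrt N$ and absorbing the $e^{ibx_i\sqrt N}$ factors collapses the exponent to $-\tfrac12\langle x,Kx\rangle$ with the same $2\times2$ matrix $K$ of Section~\ref{sec3} (real part positive definite since $|\theta|<\tfrac1{4M^2+10M+1}$, and $\det K=A_2$), up to a series $\sum_{j\ge0}N^{-j/2}P_j(x_1,x_2)$ whose coefficients are polynomials in $x_1,x_2$ with coefficients polynomial in $t,\theta,v$ and in the sums $\frac1N\sum_i\lambda_i^{a}(1-2\theta\lambda_i+2\theta v)^{-k}$ with $k\le 2n+2$; using $\lambda_i=\frac{(1+2\theta v)-(1-2\theta\lambda_i+2\theta v)}{2\theta}$ to expand the numerators (this is the source of the relations among $A_2,F,G$ and their analogues), each such sum is a polynomial in $v$ and in $A_2,\dots,A_{2n+2}$. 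Extending the $x$-integration from $I_1\times I_2$ to $\mathbb R^2$ at cost $O(N^{-\infty})$ (legitimate since $\Re K\succ0$), then applying Lemma~\ref{CGI} and its polynomial-moment versions, gives $f_l(t)=\sum_{j\ge0}N^{-j/2}m_j'$ through any prescribed order. The odd powers of $1/\sqrt N$ then vanish: the integrand $\gamma_N^l\exp\{-tN\gamma_N(\hat\gamma_N-v\gamma_N)\}$ is multiplied by $(-1)^l$ under $(\gamma_N,\hat\gamma_N)\mapsto-(\gamma_N,\hat\gamma_N)$, equivalently under $(x_1,x_2)\mapsto(-x_1,-x_2)$ after linearization, while the Gaussian measures are invariant, so every contribution carrying an odd power of $1/\sqrt N$ is an odd Gaussian moment and integrates to zero; hence $f_l(t)=m_0+m_1/N+\dots+m_n/N^n+O(N^{-(n+1)})$ with the $m_i$ of the asserted form.

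The step I expect to be the main obstacle is the bookkeeping in this last stage: verifying to all orders that (i) the combined $1/\sqrt N$-expansion of the $N$-fold product, the $\rho$-derivatives generating $\gamma_N^l$, and the $e^{ibx_i\sqrt N}$ prefactors really do produce coefficients expressible purely through $A_2,\dots,A_{2n+2}$ (no residual sums that fail to reduce), and (ii) that all the Taylor remainders — in $\nu_i$, in the per-site factors, and in the two domain truncations — are uniformly $O(N^{-(n+1)})$ over the growing intervals $I_1,I_2$. The parity cancellation of the half-integer powers, although conceptually straightforward, must also be reconciled with the truncation, so that it persists after the various $O(N^{-\infty})$ replacements rather than holding only for the untruncated integral.
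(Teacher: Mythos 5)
Your overall architecture is the same as the paper's: truncate to $\mathcal D$ at cost $O(N^{-\infty})$, linearize the quartic exponent with two auxiliary Gaussian variables, evaluate the inner $\tilde g$-integral, Taylor-expand the resulting $N$-fold product in powers of $N^{-1/2}$ with coefficients reducible to the $A_k$'s, and kill the half-integer powers by a parity argument. The one genuinely different ingredient is your treatment of the factor $\gamma_N^l$: the paper expands $\gamma_N^l$ multinomially over tuples of \emph{distinct} indices and then needs a separate combinatorial lemma to rewrite such distinct-index sums as polynomials in the power sums $\frac1N\sum_i\nu_i^{k}q(\mu_i)$, whereas your generating-function device $\gamma_N^l=\partial_\rho^l e^{\rho\gamma_N}\big|_{\rho=0}$ produces a closed-form tilted Gaussian and lands directly on power sums, bypassing that lemma entirely. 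That is a real simplification. Two small corrections, neither fatal. First, your closed form for the tilted integral carries a spurious per-site factor $\exp\{\rho/(N(1-2\theta\lambda_i+2\theta v))\}$; since $\frac1N\sum_i(1-2\theta\lambda_i+2\theta v)^{-1}=1$, the product of these equals $e^{\rho}$ and would cancel the correct prefactor $e^{-\rho}$ — the tilted integral is just $e^{-\rho}\prod_i\bigl(1+2\nu_i-\tfrac{2\rho}{N(1-2\theta\lambda_i+2\theta v)}\bigr)^{-1/2}$. Second, your parity step is misstated: there is no change of variables realizing $(\gamma_N,\hat\gamma_N)\mapsto-(\gamma_N,\hat\gamma_N)$, and if the $x$-integrand genuinely picked up a global factor $(-1)^l$ under $(x_1,x_2)\mapsto(-x_1,-x_2)$ then $f_l$ would vanish identically for odd $l$, which is false. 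The correct mechanism (and the one the paper uses) is that because the $x$-dependence enters only through $\mu_i\propto(x_1,x_2)/\sqrt N$, the coefficient of $N^{-k/2}$ in the expansion is a polynomial in $(x_1,x_2)$ all of whose monomials have degree congruent to $k$ modulo $2$; the odd-$k$ coefficients are then odd polynomials and integrate to zero against the (symmetrized) centered Gaussian. With that bookkeeping in place your argument closes in the same way as the paper's.
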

\begin{proof}
First we show that $f_l$ has asymptotic expansion in form (\ref{exp3.1}), then we show those $m_i$'s depend only explicitly on $t$, $\theta$ and $\{A_k\}_{k=2}^{+\infty}$. Use the same method as for the first order term, we obtain the following expression of $f_l(t)$,
\begin{align*}
f_l(t)=\frac{1}{(\sqrt{2\pi})^2}\int_{I_1\times I_2}e^{ibx_1\sqrt{N}+bx_2\sqrt{N}} \left\{\int_{|\hat{\gamma}_N|\leq N^{-\kappa_1}\atop |\gamma_N|\leq N^{-\kappa_2}}\prod_{i=1}^{N}\frac{1}{\sqrt{2\pi}}e^{-\frac{\tilde{g}_i^2}{2}\left(1+\frac{2b(i(1-v+\lambda_i) x_1+(1+v-\lambda_i)x_2)}{\sqrt{N}(1-2\theta \lambda_i+2\theta v)}\right)}\right.\\
\left.\left(\frac{1}{N}\sum_{i=1}^{N}\frac{(\tilde{g}_i^2-1)}{1-2\theta\lambda_i+2\theta v}\right)^l \prod_{i=1}^{N}d\tilde{g}_i\vphantom{\int_{|\hat{\gamma}_N|\leq N^{-\kappa_1}\atop |\gamma_N|\leq N^{-\kappa_2}}}\right\}\prod_{i=1}^{2}e^{-\frac{x_i^2}{2}}dx_i+O(N^{-\infty})
\end{align*}
where $b^2=\frac{t}{2}$. The same argument about replacing integral domain in the inner integral can be implemented here without too much change. So we can replace the integral domain $\{{|\hat{\gamma}_N|\leq N^{-\kappa_1}, |\gamma_N|\leq N^{-\kappa_2}}\}$ by $\mathbb{R}^{N}$.
\begin{align}
\notag&e^{ibx_1\sqrt{N}+bx_2\sqrt{N}} \int_{\mathbb{R}^N}\prod_{i=1}^{N}\frac{1}{\sqrt{2\pi}}e^{-\frac{\tilde{g}_i^2}{2}
\left(1+\frac{2b(i(1-v+\lambda_i) x_1+(1+v-\lambda_i)x_2)}{\sqrt{N}(1-2\theta \lambda_i+2\theta v)}\right)}\{\frac{1}{N}\sum_{i=1}^{N}\frac{(\tilde{g}_i^2-1)}{1-2\theta\lambda_i+2\theta v}\}^l\prod_{i=1}^{N}d\tilde{g}_i\\
\notag=&\prod_{i=1}^{N} \frac{\exp\{\frac{b(i(1-v+\lambda_i) x_1+(1+v-\lambda_i)x_2)}{\sqrt{N}(1-2\theta \lambda_i+2\theta v)}\}}{\sqrt{1+\frac{2b(i(1-v+\lambda_i) x_1+(1+v-\lambda_i)x_2)}{\sqrt{N}(1-2\theta \lambda_i+2\theta v)}}} \int_{\mathbb{R}^N}\prod_{i=1}^{N}\frac{1}{\sqrt{2\pi}}e^{-\frac{\tilde{g}_i^2}{2}}
\left\{\frac{1}{N}\sum_{i=1}^{N} \frac{\frac{\tilde{g}_i^2}{1+\frac{2b(i(1-v+\lambda_i) x_1+(1+v-\lambda_i)x_2)}{\sqrt{N}(1-2\theta \lambda_i+2\theta v)}}-1}{1-2\theta\lambda_i+2\theta v}\right\}^l\prod_{i=1}^{N}d\tilde{g}_i\\
\label{exp3.2}=&\underbrace{\prod_{i=1}^{N} \frac{e^{\mu_i}}{\sqrt{1+2\mu_i}} }_{E_1}\underbrace{\int\prod_{i=1}^{N}\frac{1}{\sqrt{2\pi}}e^{-\frac{\tilde{g}_i^2}{2}}\left\{\frac{1}{N}\sum_{i=1}^{N} \frac{\nu_i}{1+2\mu_i}(\tilde{g}_i^2-(1+2\mu_i))\right\}^l\prod_{i=1}^{N}d\tilde{g}_i}_{E_2}
\end{align}
where the equations come from change of variables (by Lemma \ref{CI}), and
\begin{align*}
\nu_i=\frac{1}{1-2\theta\lambda_i+2\theta v},\quad \mu_i=\frac{b(i(1-v+\lambda_i) x_1+(1+v-\lambda_i)x_2)}{\sqrt{N}(1-2\theta \lambda_i+2\theta v)}.
\end{align*}
Notice here $\mu_i$ can be written as a linear function of $\nu_i$
\begin{align}
\label{rel1}\mu_i=\left(i(1+\frac{1}{2\theta})\frac{x_1}{\sqrt{N}}+(1-\frac{1}{2\theta})\frac{x_2}{\sqrt{N}}\right)b\nu_i
+(\frac{x_2}{\sqrt{N}}-i\frac{x_1}{\sqrt{N}})\frac{b}{2\theta}.
\end{align}
The formula (\ref{exp3.2}) consists of two parts: a product factor $E_1$ and a Gaussian integral $E_2$. For $E_1$ we can obtain the following explicit asymptotic expansion
\begin{align}
\prod_{i=1}^{N}\frac{e^{\mu_i}}{\sqrt{1+2\mu_i
}}
\notag=&\prod_{i=1}^{N}e^{\mu_i
-\frac{1}{2}\log(1+2\mu_i)}
=\prod_{i=1}^{N}e^{\mu_i
^2+\sum_{k=3}^{\infty}\frac{(-1)^k 2^{k-1}}{k}\mu_i^{k}}\\
\label{exp3.3}=&e^{\sum_{i=1}^{N}\mu_i
^2}e^{\sum_{k=3}^{\infty}\frac{1}{N^{\frac{k}{2}-1}}\left\{\frac{(-1)^k 2^{k-1}}{k}\sum_{i=1}^{N}\frac{b^k(i(1-v+\lambda_i) x_1+(1+v-\lambda_i)x_2)^k}{N(1-2\theta \lambda_i+2\theta v)^k}\right\}}.
\end{align}
Notice
\begin{align*}
&\sum_{i=1}^{N}\frac{(i(1-v+\lambda_i) x_1+(1+v-\lambda_i)x_2)^k}{N(1-2\theta \lambda_i+2\theta v)^k}\\
=&\sum_{m=0}^{k}\left\{{k\choose m}(ix_1-x_2)^m(i(1-v)x_1+(1+v)x_2)^{k-m}\sum_{i=1}^N\frac{\lambda_i^m}{N(1+2\theta\lambda_i+2\theta v)^k}\right\}.
\end{align*}
If we regard $v$ and $\theta$ as constant (since they are of magnitude $O(1)$), then $\sum_{i=1}^N\frac{\lambda_i^m}{N(1+2\theta\lambda_i+2\theta v)^k}$ can be written as a linear combination of $A_2,A_3\cdots ,A_k$ for any $0\leq m\leq k$. Thus we can expand (\ref{exp3.3}), to obtain
\begin{align*}
\prod_{i=1}^{N}\frac{e^{\mu_i}}{\sqrt{1+2\mu_i
}}=e^{\sum_{i=1}^{N} \mu_i^2}\left\{1+\sum_{k=1}^{\infty}\frac{1}{N^{\frac{k}{2}}}g_k(x_1, x_2)\right\},
\end{align*}
where $g_k(x_1,x_2)$'s are polynomials of $x_1$ and $x_2$. Consider $t$, $\theta$ and $v$ as constant, the coefficients of $g_k(x_1,x_2)$ are polynomials in terms of $A_2,A_3,\cdots A_{k+2}$. Moreover the degree of each monomial of $g_k(x_1,x_2)$ is congruent to $k$ modula $2$.

Next we compute the Gaussian integral $E_2$ in (\ref{exp3.2}). Expand the $l$-th power, we obtain
\begin{align*}
&\left\{\frac{1}{N}\sum_{i=1}^{N} \frac{\nu_i}{1+2\mu_i}(\tilde{g}_i^2-(1+2\mu_i))\right\}^l\\
&\qquad=\frac{1}{N^{l}}\sum_{k_1\geq k_2\cdots\geq k_m\atop k_1+k_2\cdots k_m=l}\left\{\frac{l!}{k_1!k_2!\cdots k_m!}\sum_{1\leq i_1,i_2,\cdots i_m\leq N\atop \text{distinct}}\prod_{j=1}^{m}(\frac{\nu_{i_j}}{1+2\mu_{i_j}})^{k_j}(\tilde{g}_{i_j}^2-(1+2\mu_{i_j}))^{k_j}\right\}.
\end{align*}
Denote
\begin{align*}
p_{k_j(\mu_{i_j})}=\int\frac{1}{\sqrt{2\pi}}e^{-\frac{\tilde{g}_{i_j}^2}{2}}(\tilde{g}_{i_j}^2-(1+2\mu_{i_j}))^{k_j}d\tilde{g}_{i_j},
\end{align*}
then $p_{k_j}$ is a $k_j$-th degree polynomial, which only depends on $k_j$. With this notation, the Gaussian integral $E_2$ can be written as,
\begin{align*}
\frac{1}{N^{l}}\sum_{k_1\geq k_2\cdots\geq k_m\atop k_1+k_2\cdots k_m=l}\left\{\frac{l!}{k_1!k_2!\cdots k_m!}\sum_{1\leq i_1,i_2,\cdots i_m\leq N\atop \text{distinct}}\prod_{j=1}^{m}(\frac{\nu_{i_j}}{1+2\mu_{i_j}})^{k_j}p_{k_j}(\mu_{i_j})\right\}.
\end{align*}
By the following lemma, the above expression can be expressed in a more symmetric way, as a sum in terms of
\begin{align}
\label{exp3.4}\frac{1}{N^{l-m}}\prod_{j=1}^{m}\left\{\frac{1}{N}\sum_{i=1}^{N}(\frac{\nu_i}{1+\mu_i})^{k_j}q_{k_j}(\mu_i)\right\},
\end{align}
where $k_1\geq k_2 \cdots \geq k_m$, $k_1+k_2+\cdots k_m=l$ and $q_{k_j}$'s are some polynomials depending only on $k_j$.

\begin{lemma}
Given integers $s_1,s_2\cdots, s_m$ and polynomials $q_1,q_2,\cdots q_m$, consider the following polynomial in terms of $2N$ variables $x_1,x_2\cdots x_N, y_1,y_2\cdots y_N$
\begin{align*}
h=\sum_{1\leq i_1,i_2\cdots i_m\leq N\atop \text{distinct}}\prod_{j=1}^{m}x_{i_j}^{s_j}q_j(y_{i_j}).
\end{align*}
Then $h$ can be expressed as sum of terms in the following form
\begin{align}
\label{form}\prod_{j=1}^{l}\{\sum_{i=1}^{N}x_i^{t_j}\tilde{q}_j(y_i)\},
\end{align}
where $l$, $\{t_i\}_{i=1}^{N}$ and polynomials $\{\tilde{q}_i\}_{i=1}^{N}$ are to be chosen.
\end{lemma}
\begin{proof}
We prove this by induction on $m$. If $m=1$ then $h$ itself is of the form (\ref{form}). We assume the statement holds for $1,2,3\cdots, m-1$, then we prove it for $m$.
\begin{align}
\notag &h-\prod_{j=1}^{m}(\sum_{i=1}^{N}x_i^{s_j}q_j(y_i))\\
\label{sum}=& -\sum_{d=1}^{m-1}\sum_{\pi_1,\pi_2\cdots \pi_d\atop \text{a partition of }\{1,2,\cdots, m\}}\sum_{1\leq i_1,i_2\cdots,i_d\leq N\atop \text{distinct}}\prod_{j=1}^{d}x_{i_j}^{\sum_{l\in \pi_j}s_l}\prod_{l\in \pi_j}q_{l}(y_{i_j}).
\end{align}
Notice the summands of (\ref{sum}) are $\sum_{1\leq i_1,i_2\cdots,i_d\leq N\atop \text{distinct}}\prod_{j=1}^{d}x_{i_j}^{\sum_{l\in \pi_j}s_l}\prod_{l\in \pi_j}q_{l}(y_{i_j})$, which are of the same form as $h$ but with less $m$. Thus, by induction, each term in (\ref{sum}) can be expressed as sum of terms in form (\ref{form}), so does $h$.
\end{proof}

In view of (\ref{exp3.4}), since we have $\mu_i=O(N^{\epsilon-\kappa_2})$, we can Taylor expand $\frac{1}{1+\mu_i}$ in (\ref{exp3.4}). Also notice (\ref{rel1}), the relation between $\mu_i$ and $\nu_i$, (\ref{exp3.4}) has the following full expansion
\begin{align}
\label{exp3.5}\frac{1}{N^{l-m}}\sum_{k=0}^{\infty}\frac{1}{N^{\frac{k}{2}}}h_k(x_1,x_2),
\end{align}
where $h_k(x_1,x_2)$ are $k$-th degree polynomials of $x_1$ and $x_2$. Consider $t$, $\theta$ and $v$ as constant (since they are of magnitude $O(1)$), the coefficients of $h_k(x_1,x_2)$ are polynomials of $A_2,A_3,\cdots A_{k+l-m+1}$. Since the Gaussian integral $E_2$ is the sum of terms which has the asymptotic expansion (\ref{exp3.5}), itself has the full asymptotic expansion,
\begin{align}
E_2=\label{exp3.6}\sum_{k=0}^{\infty}\frac{1}{N^{\frac{k}{2}}}s_k(x_1,x_2),
\end{align}
where the coefficients of $s_k(x_1,x_2)$ are polynomials of $A_2,A_3,\cdots A_{k+1}$. And the degree of each monomial of $s_k$ is congruent to $k$ modula $2$. Combine the asymptotic expansions of $E_1$ and $E_2$, we obtain the expansion of $f_l(t)$,
\begin{align}
\notag f_l(t)=&\frac{1}{2\pi}\int_{I_1\times I_2}e^{-\frac{1}{2}\langle x, \tilde{K} x\rangle}\left\{\sum_{k=0}^{\infty}\frac{1}{N^{\frac{k}{2}}}g_k(x_1,x_2)\right\}\{\sum_{k=0}^{\infty}\frac{1}{N^{\frac{k}{2}}}s_k(x_1,x_2)\}dx_1dx_2+O(N^{-\infty})\\
\label{exp3.6}=&\frac{1}{2\pi}\int_{I_1\times I_2}e^{-\frac{1}{2}\langle x, \tilde{K} x\rangle}\left\{\sum_{k=0}^{\infty}\frac{\sum_{l=0}^{k}g_l(x_1,x_2)s_{k-l}(x_1,x_2)}{N^{\frac{k}{2}}}\right\}dx_1dx_2+O(N^{-\infty}).
\end{align}
where $\tilde{K}$ is the following $2\times 2$ matrix
\begin{align*}
K=
 \left[ \begin{array}{cc}
1+t\left((1-v)^2A_2+2(1-v)F+G\right) & -t i\left((1-v^2)A_2+2vF-G\right) \\
-t i\left((1-v^2)A_2+2vF-G\right) & 1-t\left((1+v)^2A_2-2(1+v)F+G\right)  \end{array} \right]
\end{align*}
The formula (\ref{exp3.6}) is a Gaussian integral in terms of $x_1$ and $x_2$. If we cut off at $k=m$, this will result in an error term $O(N^{-\frac{m+1}{2}})$. Now the integrand is a finite sum. The integral is $O(N^{-\infty})$ outside the region $I_1\times I_2$, thus we obtain the following asymptotic expansion,
\begin{align}
\label{exp3.7}f_l(t)=
\sum_{k=0}^{m}\frac{1}{N^{\frac{k}{2}}}\frac{1}{2\pi}\int e^{-\frac{1}{2}\langle x, \tilde{K} x\rangle}\left\{\sum_{l=0}^{k}g_l(x_1,x_2)s_{k-l}(x_1,x_2)\right\}dx_1dx_2+O(N^{-\frac{m+1}{2}}).
\end{align}
Notice the degree of each monomial of $g_l(x_1,x_2)$ is congruent to $l$ modula $2$, and the degree of each monomial of $s_{k-l}(x_1,x_2)$ is congruent to $k-l$ modula $2$. For any odd $k$,  $\sum_{l=0}^{k}g_l(x_1,x_2)s_{k-l}(x_1,x_2)$ is sum of monomials of odd degree. Since here $x_1$, $x_2$ are centered Gaussian variables, the integral of $\sum_{l=0}^{k}g_l(x_1,x_2)s_{k-l}(x_1,x_2)$ vanishes. Using Lemma \ref{GIF}, (\ref{exp3.7}) can be rewritten as
\begin{align*}
f_l(t)=&
\sum_{k=0}^{[\frac{m}{2}]}\frac{1}{N^{k}}\frac{1}{2\pi}\int e^{-\frac{1}{2}\langle x, \tilde{K} x\rangle}\{\sum_{l=0}^{2k}g_l(x_1,x_2)s_{2k-l}(x_1,x_2)\}dx_1dx_2+O(N^{-[\frac{m}{2}]-1})\\
=&\sum_{k=0}^{[\frac{m}{2}]}\frac{1}{N^{k}}\frac{1}{\sqrt{\det(\tilde{K})}}\{\sum_{l=0}^{2k}g_l(\frac{\partial}{\partial \xi_1},\frac{\partial}{\partial \xi_2})s_{2k-l}(\frac{\partial}{\partial \xi_1},\frac{\partial}{\partial \xi_2})e^{\frac{1}{2}\langle \xi, \tilde{K}^{-1}\xi\rangle}\}\Big{|}_{\xi=0}+O(N^{-[\frac{m}{2}]-1}).
\end{align*}
Since the entries of matrix $\tilde{K}$ and the coefficients of $\sum_{l=0}^{2k}g_l(x_1,x_2)s_{2k-l}(x_1,x_2)$ depend only on $t$, $\theta$, $v$ and $A_2,A_3,\cdots,A_{2k+2}$. This implies $f_l(t)$ has the expansion (\ref{exp3.1}).
\end{proof}
From the argument above, the asymptotic expansion of (\ref{exp3.0}) up to $O(N^{-\frac{l}{2}})$ is a finite sum in terms of derivatives of $f_k(t)$'s at $t=\theta$. Differentiate $f_l(t)$ term by term, we arrive at our main theorem of this paper,

\begin{theorem}\label{rh}
If $\sup_{N}\|B_N\|_{\infty}<M$, then for any $\theta\in \mathbb{R}$ such that $|\theta|<\frac{1}{4M^2+10M+1}$, the spherical integral has the following asymptotic expansion (up to $O(\frac{1}{N^{n+1}})$ for any given $n$)
\begin{align*}
e^{-\frac{N}{2}\int_0^{2\theta} R_{B_N}(s)ds}I_N(\theta, B_N)
=m_0+\frac{m_1}{N}+\frac{m_2}{N^2}+\cdots +\frac{m_n}{N^n}+O(\frac{1}{N^{n+1}})
\end{align*}
where $R_{B_N}$ is the R-transform of empirical spectral distribution of $B_N$,  and $\{m_i\}_{i=0}^{n}$ depend on $\theta$, $v$ and the derivatives of Hilbert transform of the empirical spectral distribution of $B_N$ , namely $A_2,A_3,\cdots A_{2n+2}$.
Especially we have
\begin{align*}
m_0=\frac{1}{\sqrt{A_2}},\quad m_1=\frac{1}{\sqrt{A_2}}(\frac{3}{2}\frac{A_4}{A_2^2}-\frac{5}{3}\frac{A_3^2}{A_2^3}+\frac{1}{6}).
\end{align*}
\end{theorem}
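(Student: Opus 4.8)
The plan is to reduce the asymptotic expansion of $I_N(\theta,B_N)$ to the expansions of the auxiliary functions $f_l(t)$ established in Proposition \ref{pp2}, and then to extract the explicit values of $m_0$ and $m_1$ by carrying out the relevant Gaussian integrals. First I would invoke Proposition \ref{proposition1} to replace $I_N(\theta,B_N)$ by $I_N^{\kappa_1,\kappa_2}(\theta,B_N)$ up to an $O(N^{-\infty})$ error, and then recall the identity
\begin{align*}
e^{-\frac{N}{2}\int_0^{2\theta}R_{B_N}(s)ds}I_N(\theta,B_N)
=\int_{|\gamma_N|\leq N^{-\kappa_1},\,|\hat{\gamma}_N|\leq N^{-\kappa_2}}\exp\{-\theta N\gamma_N\tfrac{\hat{\gamma}_N-v\gamma_N}{\gamma_N+1}\}\prod_{i=1}^{N}dP(\tilde g_i)+O(N^{-\infty}).
\end{align*}
Expanding the denominator $\gamma_N+1$ as in (\ref{exp2.1}) and noting that the $l$-th summand is $O(N^{-l/2})$, for any target order $O(N^{-(n+1)})$ it suffices to truncate the sum at $l=2n+1$, which yields a finite linear combination of the quantities $(-1)^l\binom{l-1}{k-1}\frac{\theta^k}{k!}\frac{d^k f_l(t)}{dt^k}\big|_{t=\theta}$. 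Each $f_l(t)$ has, by Proposition \ref{pp2}, a full expansion in integer powers of $1/N$ with coefficients smooth in $t$ (and explicitly polynomial in $A_2,\dots,A_{2n+2}$ after fixing the cutoff), so differentiating term by term in $t$ and collecting powers of $1/N$ produces exactly the claimed expansion, with $\{m_i\}$ depending only on $\theta$, $v$ and $A_2,\dots,A_{2n+2}$; here one must check that term-by-term differentiation is legitimate, which follows because the expansion of $f_l$ is uniform in $t$ on a neighbourhood of $\theta$ (the cutoffs $I_1,I_2$ and all the Taylor remainders are controlled uniformly there).

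It remains to compute $m_0$ and $m_1$. The contribution to order $N^0$ comes only from $l=0$: by Theorem \ref{rf} and Lemma \ref{CGI}, $f_0(\theta)=\frac{1}{\sqrt{\det K}}+o(1)=\frac{1}{\sqrt{A_2}}+o(1)$, using the algebraic identity $\det K=A_2$ proved there, so $m_0=\frac{1}{\sqrt{A_2}}$. For $m_1$, I would collect all contributions of order $N^{-1}$: the $k=2$, $k=3$ terms in the expansion (\ref{exp3.6}) of $f_0(t)$ (the odd-order terms $k=1$, $k=3$ vanish by parity, so really only $k=2$ survives), evaluated at $t=\theta$, together with the $l=1,2$ terms from (\ref{exp2.1}), which after differentiation in $t$ contribute at order $N^{-1}$ through the leading coefficients of $f_1$ and $f_2$. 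Concretely, using the representation $f_l(t)=\frac{1}{\sqrt{\det\tilde K}}\big[g_0 s_0 + N^{-1}(g_0 s_2 + g_1 s_1 + g_2 s_0)(\tfrac{\partial}{\partial\xi})e^{\frac12\langle\xi,\tilde K^{-1}\xi\rangle}|_{\xi=0}\big]+\cdots$, one evaluates the needed polynomials $g_1,g_2$ (from the $E_1$ product, i.e. from the cubic and quartic terms in $\sum_k \frac{(-1)^k 2^{k-1}}{k}\mu_i^k$) and $s_1,s_2$ (from the $E_2$ Gaussian integral, i.e. from the moments $p_{k_j}$ of $\tilde g_i^2-(1+2\mu_i)$), then applies Lemma \ref{GIF} to turn the Gaussian integrals into derivatives of $e^{\frac12\langle\xi,\tilde K^{-1}\xi\rangle}$ at $\xi=0$. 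Simplifying the resulting rational expression in $A_2,A_3,A_4$ (and $F,G$, which reduce to $A_2$ via the stated relations, so that $F$ and $G$ drop out) should give $m_1=\frac{1}{\sqrt{A_2}}\big(\frac{3}{2}\frac{A_4}{A_2^2}-\frac{5}{3}\frac{A_3^2}{A_2^3}+\frac16\big)$.

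The main obstacle is the bookkeeping in the last step: one has to keep track simultaneously of the two-variable polynomials $g_k$ coming from the non-Gaussian product factor $E_1$ and the polynomials $s_k$ coming from the Gaussian integral $E_2$, expand them to the needed order, and then evaluate a $2\times2$ complex Gaussian integral against each monomial via Lemma \ref{GIF}. The parity observation (only even total degree survives against a centered Gaussian, and $g_k s_{k-l}$ has degree $\equiv k \bmod 2$) cuts the work roughly in half, and the algebraic relations among $A_2,F,G$ should cause all dependence on $F$ and $G$ to cancel, but verifying that the final simplification produces precisely the stated constants $\frac{3}{2},\frac{5}{3},\frac16$ is the delicate computational heart of the proof. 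I would also double-check the contributions of $f_1$ and $f_2$ at order $N^{-1}$: since the $l$-th term in (\ref{exp2.1}) is $O(N^{-l/2})$, only $l=2$ can contribute at order $N^{-1}$ through the leading term of $f_2$, while $l=1$ contributes through the $O(N^{-1/2})$-order coefficient of $f_1$ paired with the $N^{-1/2}$ prefactor — and by parity this latter coefficient should again vanish, so in fact only $l=0$ and $l=2$ feed into $m_1$, which is the key structural simplification making the constant computable by hand.
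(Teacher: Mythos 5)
Your overall architecture is the same as the paper's: reduce to $I_N^{\kappa_1,\kappa_2}$, expand the denominator $\gamma_N+1$ as in (\ref{exp2.1}), express each summand as $t$-derivatives of the $f_l$ from Proposition \ref{pp2}, and read off $m_0=1/\sqrt{A_2}$ from $\det K=A_2$. However, there is a concrete error in your computation of $m_1$: the claim that \emph{only $l=0$ and $l=2$ feed into $m_1$} because the relevant coefficient of $f_1$ "vanishes by parity." Parity kills the coefficients of the \emph{odd} half-powers $N^{-1/2},N^{-3/2},\dots$ in the expansion of $f_1$ (since $s_1$ has odd degree and integrates to zero against the centered Gaussian), which is exactly why $f_1=O(N^{-1})$ rather than $O(N^{-1/2})$. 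But the coefficient of $N^{-1}$ in $f_1(t)$ comes from the even-degree combination $g_1s_1+s_2$ and does \emph{not} vanish; consequently $-\theta\frac{d}{dt}f_1\big|_{t=\theta}$ contributes at order $N^{-1}$. The paper computes it explicitly:
\begin{align*}
-\theta \frac{d}{dt}f_1\Big|_{t=\theta}=\frac{1}{N}\frac{1}{\sqrt{A_2}}\left(-4+\frac{6}{A_2}-\frac{2A_3}{A_2^2}\right)+O(N^{-2}),
\end{align*}
coming from $f_1(t)=\frac{1}{N}\,\frac{2t(t-\theta)\left(2tA_2^2-A_3(t-2\theta)-A_2(t+2\theta)\right)}{\theta^3(\det\tilde K)^{5/2}}+O(N^{-2})$: the factor $(t-\theta)$ makes $f_1(\theta)$ itself negligible, but its $t$-derivative at $t=\theta$ is not.

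This term is not optional: the $l=0$ contribution carries extra pieces $-\frac{3}{A_2}+\frac{2A_3}{A_2^2}$ and the $l=2$ contribution carries $3-\frac{3}{A_2}$, and it is precisely the $l=1$ term $-4+\frac{6}{A_2}-\frac{2A_3}{A_2^2}$ that cancels all $1/A_2$ and $A_3/A_2^2$ terms and turns $\frac{7}{6}-4+3$ into the constant $\frac16$. If you drop $l=1$ as proposed, you obtain $m_1=\frac{1}{\sqrt{A_2}}\left(\frac{25}{6}-\frac{6}{A_2}+\frac{2A_3}{A_2^2}-\frac{5}{3}\frac{A_3^2}{A_2^3}+\frac{3}{2}\frac{A_4}{A_2^2}\right)$, which is not the stated value. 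The rest of your outline (truncation at $l=2n+1$, term-by-term differentiation justified by uniformity in $t$, reduction of the Gaussian integrals via Lemma \ref{GIF}, and the parity argument that only even total powers of $N^{-1/2}$ survive in the final expansion) is consistent with the paper.
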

\begin{proof}
In the last section, we have computed the first term in the expansion $m_0=\frac{1}{\sqrt{A_2}}$. We only need to figure out the second term $m_1$. For this we cut off (\ref{exp3.1}) at $l=2$,
\begin{align}
\notag&\int_{|\gamma_N|\leq N^{-\kappa_1},\atop |\hat{\gamma}_N|\leq N^{-\kappa_2}}\exp\{-\theta N\gamma_N\frac{\hat{\gamma}_N-v\gamma_N}{\gamma_N+1}\}
\prod_{i=1}^{i=N}dP(\tilde{g}_i)\\
\label{exp4.1}=&f_0\Big{|}_{t=\theta}-\theta \frac{d}{dt}f_1\Big{|}_{t=\theta}+(\frac{\theta^2}{2}
\frac{d^2}{dt^2}+\theta\frac{d}{dt})f_2\Big{|}_{t=\theta}+O(N^{-2}).
\end{align}
Take $l=0,1,2$ in (\ref{key}), we obtain the asymptotic expansion of $f_0$, $f_1$ and $f_2$ (we put the detailed computation in the appendix),
\begin{align*}
f_0\Big{|}_{t=\theta}=&\frac{1}{\sqrt{A_2}}
+\frac{1}{N}\frac{1}{\sqrt{A_2}}(\frac{7}{6}-\frac{3}{A_2}
+\frac{2A_3}{A_2^2}-\frac{5}{3}\frac{A_3^2}{A_2^3}+\frac{3}{2}\frac{A_4}{A_2^2})+O(N^{-2}),\\
-\theta \frac{d}{dt}f_1\Big{|}_{t=\theta}=&\frac{1}{N}\frac{1}{\sqrt{A_2}}(-4+\frac{6}{A_2}-\frac{2A_3}{A_2^2})+O(N^{-2}),\\
(\frac{\theta^2}{2}
\frac{d^2}{dt^2}+\theta\frac{d}{dt})f_2\Big{|}_{t=\theta}=&\frac{1}{N}\frac{1}{\sqrt{A_2}}(3-\frac{3}{A_2})+O(N^{-2}).
\end{align*}
Plug them back to (\ref{exp4.1}), we get
\begin{align*}
m_0=\frac{1}{\sqrt{A_2}},\quad m_1=\frac{1}{\sqrt{A_2}}(\frac{3}{2}\frac{A_4}{A_2^2}-\frac{5}{3}\frac{A_3^2}{A_2^3}+\frac{1}{6}).
\end{align*}
\end{proof}

\section{Unitary Case}{\label{sec4}}

In this section we consider the unitary case, $\beta=2$. As we will see soon that the unitary case is a special case of orthogonal case. With the same notation as before, let $B_{N}=\text{diag}(\lambda_1,\lambda_2,\cdots, \lambda_N)$. and $U$ follows the Haar measure on unitary group $U(N)$. The first column $e_1$ of $U$ can be parametrized as the normalized complex Gaussian vector,
\begin{align*}
e_1=\frac{g^{(1)}+ig^{(2)}}{\|g^{(1)}+ig^{(2)}\|},
\end{align*}
where $g^{(1)}=(g_1,g_3\cdots,g_{2N-3},g_{2N-1})^{T}$ and $g^{(2)}=(g_{2},g_{4},\cdots, g_{2N-2},g_{2N})^{T}$ are independent Gaussian vectors in $\mathbb{R}^N$. Then the spherical integral has the following form
\begin{align*}
I_N^{(2)}(\theta,B_N)=\int \exp\left\{N\theta\frac{\lambda_1(g_1^2+g_{2}^2)+\lambda_2(g_3^3+g_{4}^2)+\cdots +\lambda_N(g_{2N-1}^2+g_{2N}^2)}{g_1^2+g_2^2+\cdots+g_{2N-1}^2+g_{2N}^2}\right\}\prod_{i=1}^{2N}dP(g_i).
\end{align*}
Consider the $2N\times 2N$ diagonal matrix $D_{2N}=\text{diag}\{\lambda_1,\lambda_1,\lambda_2,\lambda_2,\cdots,\lambda_N,\lambda_N\}$ with each $\lambda_i$ appearing twice. Then we have the following relation,
\begin{align*}
I_{N}^{(2)}(\theta, B_N)=I_{2N}^{(1)}(\frac{\theta}{2},D_{2N}).
\end{align*}
Define $v$ and $\{A_i\}_{i=1}^{\infty}$ as in the notation section but replace $\theta$ by $\frac{\theta}{2}$ and replace $B_N$ by $D_{2N}$,
namely, $v=R_{B_N}(\theta)$ and
\begin{align*}
A_k=\frac{(-1)^{k-1}}{(k-1)!\theta^{k}}\frac{d^{k-1}H_{B_N}}{dz^{k-1}}(v+\frac{1}{\theta})
=\frac{1}{N}\sum_{i=1}^{N}\frac{1}{(1-\theta\lambda_i+\theta v)^k}.
\end{align*}
Then from Theorem \ref{rh}, we have the following theorem for unitary case.
\begin{theorem}
If $\sup_{N}\|B_N\|_{\infty}<M$, then for any $\theta\in \mathbb{R}$ such that $|\theta|<\frac{1}{4M^2+10M+1}$, the spherical integral $I_{N}^{(2)}(\theta, B_N)$ has the following asymptotic expansion (up to $O(\frac{1}{N^{n+1}})$ for any given $n$)
\begin{align*}
I_{N}^{(2)}(\theta, B_N)=I_{2N}^{(1)}(\frac{\theta}{2},D_{2N})
=e^{N\int_{0}^{\theta} R_{B_N}(s)ds}\left\{m_0+\frac{m_1}{N}+\frac{m_2}{N^2}+\cdots+\frac{m_n}{N^n}+O(\frac{1}{N^{n+1}})\right\}
\end{align*}
where $R_{B_N}$ is the R-transform of the empirical spectral distribution of $B_N$, and $\{m_i\}_{i=0}^{n}$ depend on $\theta$, $v$, $\{A_i\}_{i=2}^{2n+2}$, and the derivatives of Hilbert transform of the empirical spectral distribution of $B_N$ at $v+\frac{1}{\theta}$.
Especially we have
\begin{align*}
m_0=\frac{1}{\sqrt{A_2}},\quad m_1=\frac{1}{2\sqrt{A_2}}(\frac{3}{2}\frac{A_4}{A_2^2}-\frac{5}{3}\frac{A_3^2}{A_2^3}+\frac{1}{6}).
\end{align*}
\end{theorem}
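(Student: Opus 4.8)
The plan is to obtain this theorem as an immediate corollary of the orthogonal result Theorem~\ref{rh}, using the identity $I_N^{(2)}(\theta,B_N)=I_{2N}^{(1)}(\theta/2,D_{2N})$ already established above (which comes from parametrizing the first column of a Haar unitary by a normalized complex Gaussian vector and regrouping its $2N$ real coordinates). First I would check that Theorem~\ref{rh} is applicable to the right-hand side: one has $\|D_{2N}\|_\infty=\|B_N\|_\infty<M$, and the hypothesis $|\theta|<\frac{1}{4M^2+10M+1}$ implies $|\theta/2|<\frac{1}{4M^2+10M+1}$, which is (with room to spare) the smallness required by Theorem~\ref{rh} applied with coupling constant $\theta/2$. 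Hence Theorem~\ref{rh} holds with $N$ replaced by $2N$, $\theta$ by $\theta/2$, and $B_N$ by $D_{2N}$.

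Next I would translate the quantities appearing in Theorem~\ref{rh} under this substitution. The normalized eigenvalue counting measure of $D_{2N}$ equals that of $B_N$ (doubling every multiplicity changes nothing after normalization), so $H_{D_{2N}}=H_{B_N}$ and hence $R_{D_{2N}}=R_{B_N}$. Consequently the exponential prefactor in Theorem~\ref{rh}, namely $\exp\{\tfrac{2N}{2}\int_0^{2(\theta/2)}R_{D_{2N}}(s)\,ds\}$, equals $\exp\{N\int_0^\theta R_{B_N}(s)\,ds\}$; the parameter $v$ of Theorem~\ref{rh} becomes $R_{D_{2N}}(2\cdot\theta/2)=R_{B_N}(\theta)$, which is the $v$ fixed in this section; and the normalized Hilbert-transform derivatives $A_k$ built from $(D_{2N},\theta/2)$ collapse, using the doubled multiplicities, to $\frac1N\sum_{i=1}^N(1-\theta\lambda_i+\theta v)^{-k}$, i.e.\ precisely the $A_k$ defined above.

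Finally I would read off the expansion. Theorem~\ref{rh} gives an expansion in integer powers of its matrix dimension, so with that dimension equal to $2N$ we get $e^{-N\int_0^\theta R_{B_N}(s)\,ds}I_N^{(2)}(\theta,B_N)=m_0^{\mathrm{orth}}+m_1^{\mathrm{orth}}/(2N)+m_2^{\mathrm{orth}}/(2N)^2+\cdots$, so the coefficient of $N^{-j}$ in the unitary expansion is $m_j^{\mathrm{orth}}/2^j$, depending on $\theta$, $v$ and $A_2,\dots,A_{2j+2}$ in the same way the orthogonal coefficient does. In particular $m_0=m_0^{\mathrm{orth}}=1/\sqrt{A_2}$ is unchanged, while $m_1=m_1^{\mathrm{orth}}/2=\frac{1}{2\sqrt{A_2}}\big(\tfrac32\tfrac{A_4}{A_2^2}-\tfrac53\tfrac{A_3^2}{A_2^3}+\tfrac16\big)$, as claimed.

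I do not expect any real obstacle here; the statement is a corollary, and the content is notational bookkeeping over Theorem~\ref{rh}. The single point requiring attention is the change of expansion variable $N\mapsto 2N$: it is responsible for the factors $2^{-j}$, hence for the extra $1/2$ in $m_1$ relative to the orthogonal case; one should also note that $D_{2N}$ is still diagonal, so the structural assumptions implicit in Theorem~\ref{rh} carry over unchanged.
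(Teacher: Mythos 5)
Your proposal is correct and follows exactly the paper's route: the identity $I_N^{(2)}(\theta,B_N)=I_{2N}^{(1)}(\theta/2,D_{2N})$, the observation that $D_{2N}$ has the same empirical spectral distribution (hence the same $H$, $R$, and, after the substitution $\theta\mapsto\theta/2$, the same $v$ and $A_k$) as $B_N$, and the reindexing of the expansion in the doubled dimension, which produces the factor $2^{-j}$ in $m_j$ and in particular the extra $\tfrac12$ in $m_1$. Your write-up is in fact more explicit than the paper's, which states the reduction without spelling out this bookkeeping.
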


\section{Asymptotic Free Convolution}{\label{sec5}}
In this section we use spherical integral to derive the asymptotic free convolution of Hermitian matrix (deterministic) and Wigner matrix. We consider the real Wigner matrix $X_N$ with entries (the complex case can be proved in the same way). Recall that a Wigner matrix is a symmetric random matrix $X_N$ such that
\begin{itemize}
\item the subdiagonal entries of $X_N$ are independent and identically distributed.
\item the random variables $\sqrt{N}X_N(i,j)$ are distributed according to a measure $\mu$ independent of $N$, and of finite moments.
\end{itemize}
Further more we make the following assumption on the law $\mu$: The probability measure $\mu$ has mean zero, variance one and satisfies a log-Sobolev inequality with coefficient $m$. With this assumption, the measure $\mu$ has exponential decay, its Laplace transform is well defined, even more we have the following bound for its Laplace transform,

\begin{lemma}{\label{lemp}}
For any real valued random variable $X$ with zero mean, variance one and satisfying log-Sobolev inequality with coefficient $m$, we have the following bounds 
\begin{align*}
e^{\frac{t^2}{2}-c|t|^3}\leq \mathbb{E}_{X}[e^{tX}]\leq e^{\frac{t^2}{2}+c|t|^3}, \quad \forall t\in[-1,1].
\end{align*}
where $c$ is some constant depending on $m$.
\end{lemma}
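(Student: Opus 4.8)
The plan is to study the cumulant generating function $\psi(t)=\log\mathbb{E}_X[e^{tX}]$ and Taylor expand it to third order about $t=0$: the hypotheses that $X$ has mean zero and variance one pin down the constant and quadratic Taylor coefficients, while the cubic remainder is controlled by the log-Sobolev constant. First I would record the consequences of the log-Sobolev inequality that make everything well defined. Applying the Herbst argument (see, e.g., \cite{AGZ}) to the $1$-Lipschitz map $x\mapsto x$, the inequality with coefficient $m$ yields the sub-Gaussian bound $\mathbb{E}_X[e^{tX}]\le e^{mt^2/2}$ for all $t\in\mathbb{R}$; in particular the Laplace transform $M(t)=\mathbb{E}_X[e^{tX}]$ is finite everywhere, hence real-analytic, and one may differentiate under the integral sign freely. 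It also gives the tail bound $\mathbb{P}(|X|>s)\le 2e^{-s^2/(2m)}$, so all absolute moments $\mathbb{E}_X[|X|^j]$ are finite and bounded by a constant depending only on $j$ and $m$. Finally, Jensen's inequality gives $M(t)\ge e^{t\,\mathbb{E}_X[X]}=1$, so $\psi=\log M$ is well defined and nonnegative.

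Next I would expand $\psi$. A direct computation from $M(0)=1$, $M'(0)=\mathbb{E}_X[X]=0$, $M''(0)=\mathbb{E}_X[X^2]=1$ gives $\psi(0)=0$, $\psi'(0)=0$ and $\psi''(0)=1$. Taylor's theorem with Lagrange remainder then provides, for each $t\in[-1,1]$, some $\xi=\xi(t)$ between $0$ and $t$ with
\[
\psi(t)=\frac{t^2}{2}+\frac{\psi'''(\xi)}{6}\,t^3 .
\]
Consequently the lemma reduces to the uniform bound $\sup_{|t|\le 1}|\psi'''(t)|\le 6c$ for a suitable $c=c(m)$, after which one simply exponentiates.

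To bound the third derivative I would use that $\psi'''(t)$ is precisely the third cumulant of $X$ under the exponentially tilted law $d\mu_t=M(t)^{-1}e^{tx}\,d\mu(x)$, that is, $\psi'''(t)=\mathbb{E}_{\mu_t}\big[(X-\mathbb{E}_{\mu_t}X)^3\big]$ (this follows because the cumulant generating function of $\mu_t$ at $s$ equals $\psi(s+t)-\psi(t)$). Using $|X-\mathbb{E}_{\mu_t}X|^3\le 4\big(|X|^3+|\mathbb{E}_{\mu_t}X|^3\big)$ and $|\mathbb{E}_{\mu_t}X|\le\mathbb{E}_{\mu_t}|X|$, it suffices to bound $\mathbb{E}_{\mu_t}|X|^k$ for $k=1,3$ uniformly over $|t|\le 1$. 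Since $M(t)\ge 1$, we have $\mathbb{E}_{\mu_t}|X|^k=M(t)^{-1}\mathbb{E}_X[|X|^ke^{tX}]\le\mathbb{E}_X[|X|^ke^{tX}]$, and by Cauchy--Schwarz together with $\mathbb{E}_X[e^{2tX}]\le e^{2m}$ (valid since $|2t|\le 2$),
\[
\mathbb{E}_X[|X|^ke^{tX}]\le\big(\mathbb{E}_X[|X|^{2k}]\big)^{1/2}\big(\mathbb{E}_X[e^{2tX}]\big)^{1/2}\le e^{m}\big(\mathbb{E}_X[|X|^{2k}]\big)^{1/2}.
\]
By the moment bound from the first step the right-hand side is $\le C(m)$, so $\sup_{|t|\le1}|\psi'''(t)|\le C'(m)$; taking $c=C'(m)/6$ completes the argument.

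The computation is essentially routine once the log-Sobolev inequality has been converted into the sub-Gaussian bound; the only genuine point is the uniform control of $\psi'''$ on $[-1,1]$, i.e.\ showing that the whole family of tilted measures $\{\mu_t:|t|\le 1\}$ has uniformly bounded low-order absolute moments, which is exactly where the hypothesis enters. An alternative that sidesteps tilting is to expand $M(t)=1+\tfrac{t^2}{2}+\sum_{k\ge 3}\frac{t^k}{k!}\mathbb{E}_X[X^k]$ and use the moment bounds together with $|t|\le 1$ to get $M(t)=1+\tfrac{t^2}{2}+O(|t|^3)$; since $M(t)\ge 1$, composing with the logarithm (which is $1$-Lipschitz on $[1,\infty)$) and expanding $\log(1+t^2/2)$ turns this into $\psi(t)=\tfrac{t^2}{2}+O(|t|^3)$ for $|t|\le 1$, with the implicit constant depending only on $m$.
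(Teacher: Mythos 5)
Your argument is correct, and it takes a genuinely different route from the one sketched in the paper's source. The paper's (commented-out) proof works directly with the moment expansion of the Laplace transform: it uses the log-Sobolev hypothesis only through the moment bounds $\mathbb{E}[|X|^k]\le m^{k/2}k!!$, writes $\mathbb{E}[e^{tX}]\ge 1+\tfrac{t^2}{2}-\sum_{n\ge3}\frac{m^{n/2}|t|^n}{(n-1)!!}$, and then shows that $\bigl(\tfrac{t^2}{2}-\log(\cdots)\bigr)/|t|^3$ stays bounded on $[-1,1]$ by checking the $t\to0$ limit with L'H\^opital — essentially the "alternative" you mention in your closing paragraph. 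Your main argument instead works at the level of the cumulant generating function $\psi=\log M$: Taylor's theorem with Lagrange remainder reduces the lemma to a uniform bound on $\psi'''$ over $[-1,1]$, which you obtain by identifying $\psi'''(t)$ with the third central moment of the tilted law $\mu_t$ and controlling the tilted moments via Cauchy--Schwarz and the Herbst sub-Gaussian bound. Your route is somewhat more robust: the paper's series argument tacitly needs $1+\tfrac{t^2}{2}-\sum_{n\ge3}\frac{m^{n/2}|t|^n}{(n-1)!!}>0$ on all of $[-1,1]$ (which can fail for large $m$ unless one first shrinks the interval or tracks constants more carefully), whereas your Lagrange-remainder/tilting argument only needs $M(t)\ge1$ and finiteness of a few exponential moments, both of which you establish. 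The price is a small amount of extra machinery (Herbst, tilted measures); the paper's version is more elementary when it applies. Either way the constant $c$ depends only on $m$, as required.
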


\begin{theorem}\label{thm1}
Let $A_N$ be a sequence of uniformly bounded deterministic real Hermitian matrices with empirical eigenvalue distribution $\mu_{A_N}$ converges weakly to a compactly supported measure $\mu_A$ fast enough. Let $X_N$ be a sequence of Wigner matrices, satisfying log-Sobolev inequality. Then we have
\begin{align}
\label{eqthm}\lim_{N\rightarrow \infty} \frac{1}{N}\log I_N(\theta, A_N+X_N)=\lim_{N\rightarrow \infty} \frac{1}{N}\log I_N(\theta, A_N)+\lim_{N\rightarrow \infty} \frac{1}{N} \log I_N(\theta, X_N), a.s.
\end{align}
\end{theorem}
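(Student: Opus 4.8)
The plan is to reduce each of the three quantities in \eqref{eqthm} to an integral of the $R$-transform of the corresponding empirical spectral distribution, by means of the per-$N$ expansion in Theorem \ref{rf}, after which the asserted identity becomes exactly the defining additivity of free convolution, $R_{\mu_A\boxplus\mu_{sc}}=R_{\mu_A}+R_{\mu_{sc}}$, read off from \eqref{add1}.

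First I would set up the operator-norm control. Since $\{A_N\}$ is uniformly bounded and $\mu$ satisfies a log-Sobolev inequality, $\|X_N\|$ concentrates exponentially around its mean, which tends to $2$ (the top of the support of $\mu_{sc}$); a Borel--Cantelli argument then produces a finite (random) $M$ with $\sup_N\max(\|A_N\|,\|X_N\|,\|A_N+X_N\|)\le M$ almost surely, after discarding finitely many $N$, which is harmless because the conclusion of Theorem \ref{rf} is a per-$N$ estimate. For $\theta$ in the admissible range $|\theta|<1/(4M^2+10M+1)$, Theorem \ref{rf} applies to $B_N=A_N$, $B_N=X_N$ and $B_N=A_N+X_N$, and in each case
\[
\tfrac1N\log I_N(\theta,B_N)=\tfrac12\int_0^{2\theta}R_{B_N}(s)\,ds+\tfrac1N\log\!\big(A_2(B_N)^{-1/2}+o(1)\big);
\]
the constraint on $\theta$ keeps every factor $1-2\theta\lambda_i+2\theta v$ uniformly bounded away from $0$ and $\infty$, so the corresponding $A_2$ stays between two positive constants and the last summand is $O(1/N)=o(1)$. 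It then remains to pass to the limit in the three $R$-transforms. Weak convergence $\mu_{A_N}\to\mu_A$ together with the uniform norm bound gives $H_{A_N}\to H_{\mu_A}$ uniformly on compacts off $[-M,M]$, hence $R_{A_N}\to R_{\mu_A}$ uniformly on $[0,2\theta]$ (for $|\theta|$ small the relevant branch of $H_{A_N}^{-1}$ lies far from $[-M,M]$, and near $s=0$ the removable singularity of $R_{A_N}$ is controlled by the first moments), so $\tfrac1N\log I_N(\theta,A_N)\to\tfrac12\int_0^{2\theta}R_{\mu_A}(s)\,ds$; the same reasoning with $\mu_{X_N}\to\mu_{sc}$ almost surely (standard for Wigner matrices, with the required norm and tail control of the log-Sobolev type in Lemma \ref{lemp}) gives $\tfrac1N\log I_N(\theta,X_N)\to\tfrac12\int_0^{2\theta}R_{\mu_{sc}}(s)\,ds$ a.s., and with $\mu_{A_N+X_N}\to\mu_A\boxplus\mu_{sc}$ almost surely — the asymptotic freeness of a Wigner matrix and a bounded deterministic sequence, see e.g. \cite{AGZ} — gives $\tfrac1N\log I_N(\theta,A_N+X_N)\to\tfrac12\int_0^{2\theta}R_{\mu_A\boxplus\mu_{sc}}(s)\,ds$ a.s. Adding the first two limits and invoking $R_{\mu_A\boxplus\mu_{sc}}=R_{\mu_A}+R_{\mu_{sc}}$ reproduces the third, which is \eqref{eqthm}. (One may instead handle the $A_N+X_N$ term without citing classical freeness: the exact identity $\mathbb E_{V_N}[I_N(\theta,A_N+V_N^*X_NV_N)]=I_N(\theta,A_N)\,I_N(\theta,X_N)$ behind \eqref{add2} expresses the right-hand side of \eqref{eqthm} as $\lim_N\tfrac1N\log\mathbb E_{V_N}[I_N(\theta,A_N+V_N^*X_NV_N)]$, and the concentration-of-measure inequalities on $O(N)$ of \cite{GuMa} let one replace $\tfrac1N\log\mathbb E_{V_N}$ by the typical value of $\tfrac1N\log I_N(\theta,A_N+V_N^*X_NV_N)$, which by Theorem \ref{rf} and the freeness of the Haar-conjugate $V_N^*X_NV_N$ with $A_N$ converges to the same $\tfrac12\int_0^{2\theta}R_{\mu_A\boxplus\mu_{sc}}(s)\,ds$.)

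I expect the genuine work to be the almost-sure bookkeeping rather than any new estimate. One must (i) control $\|X_N\|$, hence $\|A_N+X_N\|$, sharply enough — via the log-Sobolev tail bounds, of which Lemma \ref{lemp} is the scalar prototype, together with Borel--Cantelli — to legitimately bring Theorem \ref{rf} to bear on the random matrices and to upgrade convergence in expectation to almost-sure convergence for the Wigner terms; and (ii) on the additive-formula route, close the $\log\mathbb E$ versus $\mathbb E\log$ gap on the orthogonal group, i.e. to verify that $\tfrac1N\log\mathbb E_{V_N}I_N$ and the typical value of $\tfrac1N\log I_N$ agree up to $o(1)$, which is the only delicate point and is precisely where the concentration of measure on $O(N)$ is used. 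Everything else is either the per-$N$ expansion already established in Theorem \ref{rf} or a classical weak convergence of spectral measures.
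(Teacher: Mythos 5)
Your argument for the left-hand side of \eqref{eqthm} is circular in the context of this paper. You evaluate $\lim_N\frac1N\log I_N(\theta,A_N+X_N)$ by invoking $\mu_{A_N+X_N}\to\mu_A\boxplus\mu_{sc}$ almost surely, i.e.\ the asymptotic freeness of a Wigner matrix from a bounded deterministic sequence --- but that is precisely the statement the paper \emph{derives from} Theorem \ref{thm1} (immediately after the theorem, \eqref{eqthm} is rewritten as an identity between integrals of $R$-transforms and differentiated to conclude freeness; the abstract announces this as the purpose of Section \ref{sec5}). Your parenthetical alternative does not repair this: the Haar-conjugation identity and the concentration on $O(N)$ only rework the \emph{right}-hand side of \eqref{eqthm} (the sum of the two separate limits), and to identify it with the left-hand side you still need to know that $A_N+X_N$ (with no independent Haar conjugation inserted) has limiting spectral measure $\mu_A\boxplus\mu_{sc}$, which is again the freeness you are not entitled to assume. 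The remaining ingredients of your write-up (norm control of $X_N$ via log-Sobolev tails and Borel--Cantelli, uniform convergence of $R_{B_N}$ on $[0,2\theta]$ from weak convergence plus a uniform norm bound, $R_{\mu_A\boxplus\mu_{sc}}=R_{\mu_A}+R_{\mu_{sc}}$) are fine, but they only show that \eqref{eqthm} is \emph{equivalent} to asymptotic freeness for small $\theta$; they do not prove either.

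The paper's proof supplies exactly the missing input without any freeness assumption. It splits the claim into Proposition \ref{pro1} and Proposition \ref{pro2}. For Proposition \ref{pro1} one writes
\begin{align*}
\mathbb{E}_{X_N}[I_N(\theta,A_N+X_N)]=\int e^{\theta N\langle e,A_Ne\rangle}\,\mathbb{E}_{X_N}\big[e^{\theta N\langle e,X_Ne\rangle}\big]\,d\mathbb{P}(e),
\end{align*}
restricts to the delocalization event $\max_i|e_i|\le N^{-1/2+\epsilon}$ (shown to carry all but a negligible part of the integral via Proposition \ref{pro7}), and then computes the inner expectation entrywise using the Laplace-transform bounds of Lemma \ref{lemp}: on that event $\mathbb{E}_{X_N}[e^{\theta N\langle e,X_Ne\rangle}]=e^{N\theta^2+O(N^{1/2+2\epsilon})}$ \emph{uniformly in} $e$, so the double integral factors and $\frac1N\log$ of it splits into the two desired terms. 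Proposition \ref{pro2} then removes the expectation over $X_N$ by concentration of measure: $\frac1N\log I_N(\theta,A_N+\cdot)$ is Lipschitz in the entries of $X_N$, the log-Sobolev hypothesis gives the Gaussian-type deviation bound \eqref{con1}, and a second-moment estimate controls the gap between $\log\mathbb{E}$ and $\mathbb{E}\log$. This is the genuinely new work; your proposal replaces it with an appeal to the conclusion. If you want to salvage your approach, you must prove the convergence of $\mu_{A_N+X_N}$ by some independent means, at which point Theorem \ref{thm1} loses its role in the paper.
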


The proof follow the ideas of Section (6) of \cite{GuMa}, where Guionnet and Ma\"{i}da
proved the asymptotic free convolution of two independent symmetric (respectively Hermitian) matrices such that at least one of them is invariant under conjugation of orthogonal (respectively unitary) matrix. We here generalize their approach to Wigner matrices.

The proof consists of two steps, first we will show that $\frac{1}{N}\log \mathbb{E}[I_N(\theta, A_N+X_N)]$ splits into two terms, one corresponds to the spherical integral of $A_N$ and another corresponds to the spherical integral of $X_N$ (Proposition \ref{pro1}). Then we will show that we can exchange integration with the logarithm (Proposition \ref{pro2}).
\begin{proposition}\label{pro1}
Let $(A_N, X_N)_{N\in \mathbb{N}}$ be a sequence of deterministic real Hermitian matrices and Wigner matrices as in Theorem (\ref{thm1}). Then we have
\begin{align}\label{eq1}
\lim_{N\rightarrow \infty}\frac{1}{N}\log\mathbb{E}_{X_N}[I_N(\theta, A_N+X_N)]=\lim_{N\rightarrow \infty}\left[\frac{1}{N}\log I_N(\theta, A_N)
+\frac{1}{N}\log I_N(\theta, X_N)\right].
\end{align}
\end{proposition}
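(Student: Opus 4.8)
The plan is to integrate out the Wigner matrix first and then to show that, to exponential order, $X_N$ contributes only the scalar factor $e^{\theta^2N}=e^{\frac N2\int_0^{2\theta}R_{\mathrm{sc}}(s)\,ds}$, so that the spherical integral of $A_N+X_N$ factors. Writing $e_1$ for the first column of a Haar-distributed $U$ and using that $\{\sqrt N X_N(i,j)\}_{i\le j}$ are independent with common law $\mu$, Fubini's theorem (legitimate by the sub-Gaussian tails of $\mu$) gives
\[
\mathbb{E}_{X_N}\big[I_N(\theta,A_N+X_N)\big]=\int e^{\theta N\,e_1^{*}A_Ne_1}\,e^{\Phi_N(e_1)}\,dm_N(U),\qquad
\Phi_N(e_1):=\sum_{i}\phi\big(\theta\sqrt N\,e_{1i}^2\big)+\sum_{i<j}\phi\big(2\theta\sqrt N\,e_{1i}e_{1j}\big),
\]
where $\phi(s):=\log\mathbb{E}_\mu[e^{sY}]$, $Y\sim\mu$. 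Everything then reduces to showing that $\Phi_N(e_1)=\theta^2N+o(N)$ on a subset of the sphere carrying all but an $e^{o(N)}$ fraction of the mass of the tilted measure $e^{\theta N e_1^{*}A_Ne_1}\,dm_N$; granting this, $\mathbb{E}_{X_N}[I_N(\theta,A_N+X_N)]=e^{\theta^2N+o(N)}I_N(\theta,A_N)$, and dividing by $N$ and taking logarithms gives \eqref{eq1} once one inserts $\tfrac1N\log I_N(\theta,A_N)\to\tfrac12\int_0^{2\theta}R_{\mu_A}$ (Theorem \ref{rh}, using that $\mu_{A_N}\to\mu_A$ fast enough so $R_{A_N}\to R_{\mu_A}$) and $\tfrac1N\log I_N(\theta,X_N)\to\tfrac12\int_0^{2\theta}R_{\mathrm{sc}}(s)ds=\theta^2$ a.s.\ (Theorem \ref{rh} applied to a.e.\ realization of $X_N$, whose empirical spectrum converges a.s.\ to the semicircle law and whose norm stays bounded).

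For the bulk estimate, restrict to $\mathcal G_N=\{\max_i e_{1i}^2\le N^{-1+\delta}\}$ with a small fixed $\delta$. There all the arguments of $\phi$ lie in $[-1,1]$ for $N$ large, so Lemma \ref{lemp} gives $\phi(s)=\tfrac{s^2}2+O(|s|^3)$, whence
\[
\Phi_N(e_1)=\frac{\theta^2N}{2}\Big(\sum_i e_{1i}^4+4\sum_{i<j}e_{1i}^2e_{1j}^2\Big)+O\Big(N^{3/2}\sum_i|e_{1i}|^6+N^{3/2}\sum_{i<j}|e_{1i}e_{1j}|^3\Big).
\]
Because $\|e_1\|=1$ the parenthesis equals $2-\sum_i e_{1i}^4$, and on $\mathcal G_N$ one has $\sum_i e_{1i}^4\le\max_i e_{1i}^2=o(1)$, $\sum_i|e_{1i}|^6\le(\max_i e_{1i}^2)^2$ and $\sum_{i<j}|e_{1i}e_{1j}|^3\le\max_i e_{1i}^2$, so the remainder is $O(N^{1/2+\delta})=o(N)$; thus $\Phi_N(e_1)=\theta^2N+o(N)$ uniformly on $\mathcal G_N$, with leading constant $\theta^2=\tfrac12\int_0^{2\theta}R_{\mathrm{sc}}(s)ds$ as expected.

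The hard part is the complement $\mathcal G_N^{c}$, whose $m_N$-mass is only $e^{-\Omega(N^\delta)}$ — not exponentially small — while the sole universal bound available is Herbst's inequality for the log-Sobolev measure, $\phi(s)\le\tfrac m2 s^2$, giving merely $\Phi_N(e_1)\le m\theta^2N$, which exceeds $\theta^2N$ when $m>1$. Following Section 6 of \cite{GuMa}, I would stratify the sphere by the fraction $r\in[0,1]$ of $\|e_1\|^2$ carried by the macroscopic coordinates of $e_1$ and estimate the $r$-stratum via the Gaussian representation $e_1=g/\|g\|$ and the tail $\mathbb{P}(g_i^2>t)\lesssim e^{-t/2}$: its $m_N$-mass decays like $e^{-NJ(r)}$ for a convex rate $J$ with $J(0)=0$, $J'(0)=\tfrac12$ and $J(r)\uparrow\infty$ as $r\uparrow1$, while on it $e^{\theta N e_1^{*}A_Ne_1+\Phi_N}$ has exponential rate at most $\theta Mr+\tfrac12\int_0^{2\theta(1-r)}R_{\mu_A}(s)ds+\theta^2\big((m-1)r^2+1\big)$. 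The total rate of the stratum's contribution is then $\Xi(r)=-J(r)+\theta Mr+\tfrac12\int_0^{2\theta(1-r)}R_{\mu_A}+\theta^2((m-1)r^2+1)$, and the claim is that $\max_{r\in[0,1]}\Xi(r)=\Xi(0)=\theta^2+\tfrac12\int_0^{2\theta}R_{\mu_A}$, i.e.\ the bulk dominates; the crucial point is $\Xi'(0)=-\tfrac12+\theta(M-v)<0$, which holds because $|\theta|<\tfrac1{4M^2+10M+1}$ and $|v|<M$ force $\theta|M-v|<\tfrac12$, and for $r$ bounded away from $0$ the convex rate $J(r)$ dominates the (small, $O(\theta)$) positive terms. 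This interplay — $\Phi_N$ large precisely on concentrated, hence exponentially rare, $e_1$ — is where the log-Sobolev assumption on $\mu$ and the smallness of $\theta$ are genuinely used, and it is the main obstacle of the proof.
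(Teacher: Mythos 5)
Your overall architecture coincides with the paper's: integrate out $X_N$ first by Fubini, restrict to the set $\mathcal G_N=\{\max_i|e_{1i}|\le N^{-1/2+\delta/2}\}$ (the paper's $I_\epsilon$), and use Lemma \ref{lemp} there to show $\log\mathbb{E}_{X_N}[e^{\theta N\langle e,X_Ne\rangle}]=\theta^2N\bigl(2-\sum_ie_{1i}^4\bigr)/2+O(N^{1/2+\delta})=\theta^2N+o(N)$; your bulk computation, including the identity $\sum_ie_{1i}^4+4\sum_{i<j}e_{1i}^2e_{1j}^2=2-\sum_ie_{1i}^4$ and the error bounds, matches the paper's. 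Where you diverge is the complement $\mathcal G_N^c$. The paper never estimates $\mathbb{E}_{X_N}[e^{\theta N\langle e,X_Ne\rangle}]$ off the good set at all: it proves a restriction lemma (Proposition \ref{pro7}) valid for \emph{any} uniformly bounded sequence $B_N$, stating that $\int_{I_\epsilon}e^{\theta N\langle e,B_Ne\rangle}d\mathbb{P}(e)=(1+o(1))\int e^{\theta N\langle e,B_Ne\rangle}d\mathbb{P}(e)$ (proved with the $\gamma_N,\hat\gamma_N$ Gaussian machinery of Section \ref{sec3}), applies it \emph{pathwise} to $B_N=A_N+X_N$ (whose norm is uniformly bounded off an event of exponentially small probability), and only then takes $\mathbb{E}_{X_N}$ and Fubini on the restricted integral. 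This sidesteps entirely the difficulty you correctly identify — that Herbst's bound $\phi(s)\le\frac m2s^2$ gives $\Phi_N\le m\theta^2N$ on $\mathcal G_N^c$, which beats the stretched-exponential smallness of that set's mass when $m>1$. The paper's route is shorter and does not require any interplay between $\theta$ and the log-Sobolev constant $m$.

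Your alternative — a large-deviation stratification in $r$ — is a legitimate strategy (it is closer in spirit to Section 6 of \cite{GuMa}), but as sketched it has two soft spots. First, the stratum bound $\Phi_N\le\theta^2\bigl((m-1)r^2+1\bigr)N$ is not justified: the cross terms $\phi(2\theta\sqrt N e_ie_j)$ with $i$ macroscopic and $j$ diffuse have arguments of size $\theta N^{\delta/2}\gg1$, so Lemma \ref{lemp} does not apply to them either and they too must be estimated by Herbst, which changes your bound to roughly $\theta^2\bigl(1+(m-1)(2r-r^2)\bigr)N$; this adds $2(m-1)\theta^2$ to $\Xi'(0)$ and alters the sign analysis at $r=0$. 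Second, making $-J(r)$ dominate the energy terms uniformly in $r\in(0,1]$ requires $\theta^2(m-1)$ small, i.e.\ $\theta$ small depending on $m$ — a condition not implied by $|\theta|<\frac1{4M^2+10M+1}$, which involves only $M$. Neither issue is fatal (the proposition is stated for $\theta$ in an unspecified small range, and the constants can be repaired), but both must be addressed before the stratification closes; alternatively, replacing your treatment of $\mathcal G_N^c$ by the paper's pathwise application of Proposition \ref{pro7} removes the obstacle altogether.
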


\begin{proposition}\label{pro2}
Let $(A_N, X_N)_{N\in \mathbb{N}}$ be a sequence of deterministic real Hermitian matrices and Wigner matrices as in Theorem (\ref{thm1}). Then we have
\begin{align*}
\lim_{N\rightarrow \infty}\frac{1}{N}\log I_N(\theta, A_N+X_N)=\lim_{N\rightarrow \infty}\frac{1}{N}\log\mathbb{E}_{X_N}[I_N(\theta, A_N+X_N)]\quad  a.s.
\end{align*}
\end{proposition}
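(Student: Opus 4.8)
The plan is to show that the random variable $\frac1N\log I_N(\theta,A_N+X_N)$ concentrates around its mean-of-exponential $\frac1N\log\mathbb{E}_{X_N}[I_N(\theta,A_N+X_N)]$, using the log-Sobolev hypothesis on the entries of $X_N$. Concretely, I would view $X_N\mapsto \frac1N\log I_N(\theta,A_N+X_N)$ as a function of the $\sim N^2/2$ independent entries $\sqrt N X_N(i,j)$, each distributed according to $\mu$, which (being a product of measures each satisfying a log-Sobolev inequality with constant $m$) satisfies a log-Sobolev inequality with constant $O(m)$, hence also a Gaussian-type concentration (Herbst) inequality. The first step is therefore to establish that $F(X_N):=\frac1N\log I_N(\theta,A_N+X_N)$ is Lipschitz in $X_N$ with respect to the Euclidean (Hilbert--Schmidt) norm, with a Lipschitz constant of order $1/\sqrt N$ after accounting for the normalization $\sqrt N X_N(i,j)$ being the natural coordinate.

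The Lipschitz estimate is the crux. Recall $I_N(\theta,B)=\int \exp\{\theta N\, e_1^*Be_1\}\,dm_N(U)$, so $\partial_{B} \log I_N(\theta,B)$ is, up to the factor $\theta N$, the annealed average of $e_1 e_1^*$ under the tilted measure; in particular $\|\nabla_B \log I_N(\theta,B)\|$ in Hilbert--Schmidt norm is bounded by $\theta N$ times $\sup\|e_1e_1^*\|_{HS}=\theta N$. That gives Lipschitz constant $\theta N$ for $\log I_N$ as a function of $B$, hence $\theta$ for $\frac1N\log I_N$, and since $B=A_N+X_N$ depends on $X_N$ isometrically (in HS norm) and $X_N(i,j)=\frac1{\sqrt N}\xi_{ij}$, the map from the vector $(\xi_{ij})$ of renormalized entries to $F(X_N)$ has Lipschitz constant $O(\theta/\sqrt N)$. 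Then Herbst's inequality applied to the product measure $\mu^{\otimes N(N+1)/2}$ yields
\begin{align*}
\mathbb{P}\!\left(\left|\tfrac1N\log I_N(\theta,A_N+X_N)-\mathbb{E}_{X_N}\!\left[\tfrac1N\log I_N(\theta,A_N+X_N)\right]\right|\ge \delta\right)\le 2\exp\{-c N^2\delta^2\}
\end{align*}
for a constant $c$ depending on $m$ and $\theta$; summability over $N$ plus Borel--Cantelli gives almost sure convergence of $\frac1N\log I_N(\theta,A_N+X_N)-\mathbb{E}_{X_N}[\cdots]$ to $0$.

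It then remains to replace $\mathbb{E}_{X_N}[\frac1N\log I_N]$ by $\frac1N\log\mathbb{E}_{X_N}[I_N]$, i.e.\ to interchange $\log$ and $\mathbb{E}$. By Jensen, $\mathbb{E}_{X_N}[\frac1N\log I_N]\le \frac1N\log\mathbb{E}_{X_N}[I_N]$ always; for the reverse I would use the same concentration bound together with the deterministic two-sided control on $I_N$ coming from Lemma~14 of \cite{GuMa} (which bounds $\frac1N\log I_N(\theta,B)$ by quantities depending only on $\|B\|_\infty$, and $\|A_N+X_N\|_\infty$ is almost surely bounded since $A_N$ is uniformly bounded and $\|X_N\|_\infty\to 2$ a.s.). Writing $Z_N=\frac1N\log I_N(\theta,A_N+X_N)$, concentration gives $\mathbb{E}[e^{N Z_N}]\le e^{N\mathbb{E}[Z_N]}\,\mathbb{E}[e^{N(Z_N-\mathbb{E} Z_N)}]$ and the last factor is $e^{O(1)}$ by the sub-Gaussian tail (the tail at scale $\gg 1/N$ is crushed by the $e^{-cN^2\delta^2}$ bound, while on the bulk event $|Z_N-\mathbb{E}Z_N|\le C$ one uses the a priori bound), so $\frac1N\log\mathbb{E}[e^{NZ_N}] = \mathbb{E}[Z_N]+o(1)$. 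Combining, $\frac1N\log I_N(\theta,A_N+X_N)$, $\mathbb{E}_{X_N}[\frac1N\log I_N(\theta,A_N+X_N)]$ and $\frac1N\log\mathbb{E}_{X_N}[I_N(\theta,A_N+X_N)]$ all share the same limit a.s.

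The main obstacle I anticipate is making the Lipschitz bound genuinely uniform: the tilted measure defining $\nabla_B\log I_N$ is a measure on $e_1\in S^{N-1}$, and although $\|e_1e_1^*\|_{HS}=1$ gives a clean $O(1/\sqrt N)$ Lipschitz constant for $\frac1N\log I_N$ as claimed, one must be careful that this holds for all $B$ in the relevant range (not merely at $B=A_N+X_N$) so that the chain rule / mean value argument along the segment from $X_N$ to $X_N'$ is valid — this is where one invokes the uniform boundedness of $\|A_N+X_N\|_\infty$ on the high-probability event and handles the (exponentially small) complementary event separately using the a priori bounds of Lemma~14 of \cite{GuMa}.
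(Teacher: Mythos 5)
Your first step (concentration of $Z_N:=\frac1N\log I_N(\theta,A_N+X_N)$ via log-Sobolev plus Herbst, using the $O(\theta/\sqrt N)$ Lipschitz bound in the renormalized entries $\sqrt N X_N(i,j)$) is the same as the paper's, but your rate is wrong: with Lipschitz constant $L=O(\theta/\sqrt N)$ and a dimension-free LSI constant $m$ for the product measure, Herbst gives $\mathbb{P}(|Z_N-\mathbb{E}Z_N|\ge\delta)\le 2e^{-\delta^2/(2mL^2)}=2e^{-cN\delta^2}$, not $2e^{-cN^2\delta^2}$; this matches the paper's inequality (\ref{con1}). The lost factor of $N$ is harmless for Borel--Cantelli, but it is fatal for your second step. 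To replace $\mathbb{E}[Z_N]$ by $\frac1N\log\mathbb{E}[e^{NZ_N}]$ you propose to bound $\mathbb{E}[e^{N(Z_N-\mathbb{E}Z_N)}]$ by $e^{O(1)}$ using the tail bound; but integrating the tail with the correct rate, the integrand $e^{Ns}\cdot e^{-cNs^2}$ peaks at $s=1/(2c)$ with value $e^{N/(4c)}$, so all you obtain is
\begin{align*}
\frac1N\log\mathbb{E}[I_N(\theta,A_N+X_N)]\le \mathbb{E}[Z_N]+\frac{1}{4c}+o(1),
\end{align*}
and $\frac1{4c}\sim m\theta^2$ is a fixed positive constant, not $o(1)$. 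The sub-Gaussian tail at exponential scale $N\delta^2$ does not crush the weight $e^{N\delta}$ for $\delta$ of order one; your claim that the last factor is $e^{O(1)}$ would only be valid under the (incorrect) $e^{-cN^2\delta^2}$ rate. The a priori bounds from Lemma~14 of \cite{GuMa} do not rescue this, since they only confine $|Z_N-\mathbb{E}Z_N|$ to an interval of constant length, which still permits an $e^{\Theta(N)}$ contribution.

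This is precisely the gap the paper closes with the second-moment lemma at the end of Section \ref{sec5} (the analogue of Lemma 25 of \cite{GuMa}): a direct computation, using the structure of the spherical integral and the Laplace-transform bounds of Lemma \ref{lemp}, shows $\mathbb{E}[I_N^2]/(\mathbb{E}[I_N])^2\le Ce^{cN^{1/2+2\epsilon}}$. By Paley--Zygmund this yields $\mathbb{P}\bigl(I_N\ge\frac12\mathbb{E}[I_N]\bigr)\ge \frac1{4C}e^{-cN^{1/2+2\epsilon}}$, a lower bound that is much larger than the concentration failure probability $2e^{-c'N\delta^2}$ for any fixed $\delta>0$. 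Hence the event $\{I_N\ge\frac12\mathbb{E}[I_N]\}$ must meet the concentration event, which forces $\frac1N\log\mathbb{E}[I_N]\le\mathbb{E}[Z_N]+\delta+\frac{\log 2}{N}$ for all large $N$; combined with Jensen's inequality (your direction) and (\ref{con1}) this gives the proposition. You need to supply this second-moment estimate (or an equivalent upper-tail control on $I_N$ itself, not on $\log I_N$); without it the interchange of $\log$ and $\mathbb{E}$ is a genuine gap in your argument.
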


Theorem (\ref{thm1}) is exactly the combination of Proposition \ref{pro1} and Proposition \ref{pro2}. Notice the relation between spherical integral and $R$-transform,
\begin{align*}
\lim_{N\rightarrow \infty}\frac{1}{N}\log I_N(\theta, A_N+X_N)=\lim_{N\rightarrow \infty}\frac{1}{2}\int_0^{2\theta}R_{A_N+X_N}(v)dv,
\end{align*}
Replace them in (\ref{eqthm}), we get
\begin{align*}
\lim_{N\rightarrow \infty}\frac{1}{2}\int_{0}^{2\theta}R_{A_N+X_N}(v)dv
=\lim_{N\rightarrow \infty}\frac{1}{2}\int_{0}^{2\theta}R_{A_N}(v)dv
+\lim_{N\rightarrow \infty}\frac{1}{2}\int_{0}^{2\theta}vdv\quad a.s.
\end{align*}
If we differentiate both sides, the asymptotic freeness of $A_N$ and $X_N$ follows.

In the following we give the detailed proof for Proposition \ref{pro1}. For Proposition \ref{pro2}, it is analogue to section 6 of \cite{GuMa}, we only list the key ingredient for the proof.
\begin{proof}
The spherical integral can be written as a double integral
\begin{align*}
\mathbb{E}_{X_N}[I_N(\theta, A_N+X_N)]
=&\mathbb{E}_{X_N}\left[\int e^{\theta N \langle e, A_Ne\rangle} e^{\theta N \langle e, X_N e\rangle}d\mathbb{P}(e)\right]\\
=&\int e^{\theta N\langle e, A_N e\rangle} \mathbb{E}_{X_N}[e^{\theta N \langle e, X_N e\rangle}] d\mathbb{P}(e).
\end{align*}
We want to split the above integral into two parts one for $I_N(\theta, A_N)$ and one for $I_N(\theta, X_N)$. To show this we prove that asymptotically $\mathbb{E}_{X_N}[e^{\theta N \langle e, X_N e\rangle}]$ does not depend on $e$, in fact it is asymptotically $e^{\theta^2 N}$. Thus the integral is asymptotically the multiplication of two term $\int e^{\theta N\langle e, A_N e\rangle} d\mathbb{P}(e) \times \mathbb{E}_{X_N}[e^{\theta N \langle e, X_N e\rangle}]$. To do this we need some concentration property of the vector $e$,
\begin{lemma}
Vector $e$ following the uniform measure on the sphere. Then we have the concentration of measure inequality, for any $0<\epsilon<\frac{1}{2}$,
\begin{align*}
\mathbb{P}(\max_{1\leq i \leq N}|e_i|> \frac{1}{N^{\frac{1}{2}-\epsilon}})\leq Ne^{-cN^{2\epsilon}}.
\end{align*}
\end{lemma}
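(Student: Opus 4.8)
The plan is to realize $e$ as a normalized Gaussian vector and reduce the bound on $\max_i|e_i|$ to a union bound over the individual coordinates. Write $e=g/\|g\|$ with $g=(g_1,\dots,g_N)^T$ a standard Gaussian vector in $\mathbb{R}^N$, exactly as in Section~\ref{sec3}; then $|e_i|=|g_i|/\|g\|$, so a coordinate of $e$ can be large only if some $|g_i|$ is large \emph{or} $\|g\|^2$ is atypically small, and the latter event is rare because $\|g\|^2$ concentrates around $N$.

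First I would control $\|g\|^2$ from below. Applying Lemma~\ref{CMI} with weights $a_i=1/\sqrt N$ (so that $\max_i|a_i|=1/\sqrt N$) and any exponent $\kappa\in(\epsilon,\tfrac12)$ gives $\mathbb{P}\big(\big|\tfrac1{\sqrt N}\sum_i(g_i^2-1)\big|\ge N^\kappa\big)\le 2e^{-c'N^{2\kappa}}$; since $\{\|g\|^2\le N/2\}\subseteq\{|\|g\|^2-N|\ge N/2\}$ and $N/2\ge \sqrt N\,N^\kappa$ for $N$ large, this yields $\mathbb{P}(\|g\|^2\le N/2)\le 2e^{-c'N^{2\kappa}}$. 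On the complementary event $\{\|g\|^2>N/2\}$ we have $|e_i|\le \sqrt2\,|g_i|/\sqrt N$ for every $i$, so the event $\{\max_i|e_i|>N^{-1/2+\epsilon}\}$ forces $|g_i|>N^\epsilon/\sqrt2$ for some $i$.

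It then remains to combine the standard Gaussian tail bound $\mathbb{P}(|g_i|>N^\epsilon/\sqrt2)\le 2e^{-N^{2\epsilon}/4}$ with a union bound over $i=1,\dots,N$:
\begin{align*}
\mathbb{P}\Big(\max_{1\le i\le N}|e_i|>\frac{1}{N^{1/2-\epsilon}}\Big)
\le \mathbb{P}(\|g\|^2\le N/2)+\sum_{i=1}^N\mathbb{P}\big(|g_i|>N^\epsilon/\sqrt2\big)
\le 2e^{-c'N^{2\kappa}}+2N e^{-N^{2\epsilon}/4}.
\end{align*}
Because $\epsilon<\tfrac12$ we chose $\kappa$ with $2\kappa>2\epsilon$, so the first term is negligible relative to the second for $N$ large, and after absorbing the numerical factors into the constant the right-hand side is at most $N e^{-cN^{2\epsilon}}$.

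There is essentially no real obstacle here; the only points requiring care are that the lower bound on $\|g\|^2$ must be secured \emph{uniformly} before splitting the union bound over the coordinates, and that the norm-concentration error $e^{-c'N^{2\kappa}}$, with $2\kappa$ close to $1$, is genuinely dominated by the target rate $e^{-cN^{2\epsilon}}$ precisely because $\epsilon<\tfrac12$.
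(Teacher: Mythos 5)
Your proof is correct and follows essentially the same route as the paper: represent $e$ as a normalized Gaussian vector, control the event that $\|g\|^2$ is atypically small (the paper asserts this directly, with what appears to be a sign typo in the threshold, while you derive it cleanly from Lemma~\ref{CMI}), and then apply a coordinatewise Gaussian tail bound together with a union bound. The only difference is the order in which you split off the small-norm event versus taking the union over coordinates, which is immaterial.
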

\begin{proof}
\begin{align*}
\mathbb{P}\left(\max_{1\leq i \leq N}|e_i|> \frac{1}{N^{\frac{1}{2}-\epsilon}}\right)
\leq& N\mathbb{P}\left(\frac{|g_1|}{\sqrt{\frac{\sum g_i^2}{N}}}> N^{\epsilon}\right)\\
=&N\left[\mathbb{P}\left(\sqrt{\frac{\sum g_i^2}{N}}<2\right)+\mathbb{P}\left(|g_1|>N^\epsilon \sqrt{\frac{\sum g_i^2}{N}}, \sqrt{\frac{\sum g_i^2}{N}}>2\right)\right]\\
\leq & N\left[\mathbb{P}\left(\sum g_i^2<4N \right)+\mathbb{P}\left(|g_1|>2N^\epsilon\right)\right]
\leq  N e^{-cN^{2\epsilon}},
\end{align*}
for some constants $c$ depending on $\epsilon$.
\end{proof}
Define $I_{\epsilon}=\{\max_{1\leq i \leq N}|e_i|\leq\frac{1}{N^{\frac{1}{2}-\epsilon}}\}$. Then the integral of the spherical integral outside region $I_{\epsilon}$ is negligible. In fact we have the following proposition,
\begin{proposition}{\label{pro7}}
Given sequence of uniformly bounded Hermitian matrices $\{B_N\}_{N\in \mathbb{N}}$, i.e. $\sup_{N}\|B_N\|_{\infty}=M$, we have
\begin{align*}
\int_{I_{\epsilon}} e^{\theta N\langle e, B_N e\rangle}d\mathbb{P}(e)=(1+\eta(\epsilon, M, N))\int e^{\theta N \langle e, B_N e\rangle}d\mathbb{P}(e),
\end{align*}
where $\eta(\epsilon, M, N)$ goes to $0$, as $N$ goes to infinity.
\end{proposition}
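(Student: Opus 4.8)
The plan is to recast the claim as a concentration estimate for the tilted probability measure $d\mathbb{Q}(e):=e^{\theta N\langle e,B_Ne\rangle}\,d\mathbb{P}(e)/I_N(\theta,B_N)$: the assertion is equivalent to $\mathbb{Q}(I_\epsilon^c)\to 0$, and one may then take $\eta(\epsilon,M,N)=-\mathbb{Q}(I_\epsilon^c)$. We may assume $\theta\neq 0$. Since $I_\epsilon^c=\bigcup_{k=1}^{N}\{|e_k|>N^{-1/2+\epsilon}\}$, by the union bound it suffices to prove
\[
\mathbb{Q}\bigl(|e_k|>N^{-1/2+\epsilon}\bigr)\le C\,e^{-cN^{2\epsilon}}\qquad\text{uniformly in }1\le k\le N,
\]
for constants $c,C>0$ depending only on $\epsilon$, $M$ and $\theta$; summing over $k$ then gives $\mathbb{Q}(I_\epsilon^c)\le CN\,e^{-cN^{2\epsilon}}\to 0$. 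I would obtain this by a Chernoff bound for $e_k^2$, using the elementary fact that exponential moments of $e_k^2$ under $\mathbb{Q}$ are again spherical integrals: writing $P_k$ for the rank-one orthogonal projection onto the $k$-th coordinate axis, so that $\langle e,P_ke\rangle=e_k^2$, for every $s>0$
\[
\mathbb{E}_{\mathbb{Q}}\bigl[e^{s e_k^2}\bigr]=\frac{1}{I_N(\theta,B_N)}\int e^{\theta N\langle e,(B_N+\frac{s}{\theta N}P_k)e\rangle}\,d\mathbb{P}(e)=\frac{I_N\bigl(\theta,\,B_N+\tfrac{s}{\theta N}P_k\bigr)}{I_N(\theta,B_N)}.
\]

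Next I would fix a small constant $\delta>0$ so that $M':=M+\delta/|\theta|$ still satisfies $|\theta|<(4M'^2+10M'+1)^{-1}$ — possible since the corresponding inequality for $M$ is strict — and take $s=\delta N$, so that $B_N':=B_N+\tfrac{\delta}{\theta}P_k$ is a rank-one perturbation of $B_N$ with $\sup_N\|B_N'\|_\infty\le M'$. Both $\{B_N\}$ and $\{B_N'\}$ then fall in the regime of Theorem \ref{rf}; an inspection of its proof shows the expansion to be uniform over uniformly bounded sequences, with leading coefficient $A_2^{-1/2}$ bounded above and below by positive constants depending only on $M'$ and $\theta$. Hence
\[
\mathbb{E}_{\mathbb{Q}}\bigl[e^{\delta N e_k^2}\bigr]=\frac{I_N(\theta,B_N')}{I_N(\theta,B_N)}\le C_0\,\exp\Bigl\{\tfrac{N}{2}\int_0^{2\theta}\bigl(R_{B_N'}(u)-R_{B_N}(u)\bigr)\,du\Bigr\}
\]
for a constant $C_0$ depending only on $M'$ and $\theta$.

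The final ingredient is the uniform bound $\tfrac{N}{2}\int_0^{2\theta}\bigl(R_{B_N'}-R_{B_N}\bigr)\,du=O(1)$. For this I would use the explicit formula from the proof of Proposition \ref{proposition1}, $\tfrac{N}{2}\int_0^{2\theta}R_B(u)\,du=N\theta v_B-\tfrac12\sum_{i=1}^{N}\log\bigl(1-2\theta\lambda_i(B)+2\theta v_B\bigr)$, where $v_B$ solves $\tfrac1N\sum_i(1-2\theta\lambda_i(B)+2\theta v_B)^{-1}=1$. Since $B_N'-B_N$ has rank one and operator norm $\delta/|\theta|$, eigenvalue interlacing forces $\sum_i|\lambda_i(B_N')-\lambda_i(B_N)|=\delta/|\theta|$; a first-order perturbation of the equation for $v$ — whose linearization is nondegenerate and whose denominators $1-2\theta\lambda_i+2\theta v$ stay bounded away from $0$ and $\infty$ for $|\theta|$ in our range — gives $v_{B_N'}-v_{B_N}=O(N^{-1})$; and expanding the logarithms, using $\tfrac1N\sum_i(1-2\theta\lambda_i+2\theta v_B)^{-1}=1$ to cancel the leading terms, yields the $O(1)$ bound. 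Thus $\mathbb{E}_{\mathbb{Q}}[e^{\delta N e_k^2}]\le C'$, and by Markov's inequality $\mathbb{Q}(|e_k|>N^{-1/2+\epsilon})=\mathbb{Q}\bigl(e^{\delta N e_k^2}>e^{\delta N^{2\epsilon}}\bigr)\le C'e^{-\delta N^{2\epsilon}}$, which is the required estimate.

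The hard part, I expect, is this last step: the uniform control of the free-energy difference under the rank-one perturbation — notably the stability $v_{B_N'}-v_{B_N}=O(N^{-1})$ and the cancellations in the log-sum — together with the bookkeeping needed to apply Theorem \ref{rf} uniformly along the $N$-dependent family $\{B_N+\tfrac{\delta}{\theta}P_{k_N}\}$, choosing $\delta$ small enough that $\theta$ remains admissible while still $\delta N^{2\epsilon}\to\infty$. Everything else is a routine union bound and Chernoff estimate, and the argument is valid for all $\epsilon\in(0,\tfrac12)$.
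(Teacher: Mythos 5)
Your argument is correct, but it takes a genuinely different route from the paper's. The paper reuses the machinery of Section \ref{sec3}: having localized the integral on $\mathcal{D}=\{|\gamma_N|\leq N^{-\kappa_1},|\hat\gamma_N|\leq N^{-\kappa_2}\}$ via Proposition \ref{proposition1}, it observes that on $\mathcal{D}$ the event $|e_i|>N^{-1/2+\epsilon}$ forces $|g_i|\geq\tfrac12 N^{\epsilon}$ (because $\|g\|^2/N=1+\gamma_N$ is close to $1$), applies a union bound over the $N$ coordinates, and extracts the factor $e^{-c'N^{2\epsilon}}$ directly from the Gaussian tail of the single coordinate $\tilde g_1$ inside the two-auxiliary-variable representation (\ref{eq5}); no perturbation of $B_N$ is ever needed. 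You instead tilt the measure and run a Chernoff bound, recognizing $\mathbb{E}_{\mathbb{Q}}[e^{\delta N e_k^2}]$ as the ratio $I_N(\theta,B_N+\tfrac{\delta}{\theta}P_k)/I_N(\theta,B_N)$ of spherical integrals, and then control that ratio by Theorem \ref{rf} together with a rank-one stability estimate for the free energy $N\theta v_B-\tfrac12\sum_i\log(1-2\theta\lambda_i(B)+2\theta v_B)$. Your stability analysis is sound: interlacing gives $\sum_i|\Delta\lambda_i|=\delta/|\theta|$, the implicit function theorem gives $\partial v/\partial\lambda_j=(1-2\theta\lambda_j+2\theta v)^{-2}/(NA_2)=O(1/N)$ hence $\Delta v=O(1/N)$, and (more precisely than your phrasing suggests) the free energy is \emph{stationary} in $v$ by the defining equation for $v$, so the $v$-variation contributes only $O(N(\Delta v)^2)=O(1/N)$ while the $\lambda$-variation contributes $O(\sum_i|\Delta\lambda_i|)=O(1)$. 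What your route buys is a conceptually clean "spherical integral as moment generating function" argument that uses only the \emph{statement} of the leading-order asymptotics; what it costs is (i) the need to verify that the $o(1)$ in Theorem \ref{rf} is uniform over the $N$- and $k$-dependent family $\{B_N+\tfrac{\delta}{\theta}P_k\}$ — this does hold, since every error constant in that proof depends only on $\sup\|\cdot\|_\infty$, $\theta$, $\kappa_1$, $\kappa_2$, but it is an assertion you would have to check rather than quote — and (ii) the requirement that $\delta$ be chosen so that $M+\delta/|\theta|$ remains in the admissible range for $\theta$, which is fine by strictness but slightly narrows the constants. Both proofs yield the same rate $\eta=O(Ne^{-cN^{2\epsilon}})$.
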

\begin{proof}
From Proposition \ref{proposition1}, we have
\begin{align*}
\int_{\mathcal{D}}e^{\theta N\langle e, B_N e\rangle}d\mathbb{P}(e)
=(1+\delta(\kappa_1,\kappa_2, N))\int e^{\theta N\langle e, B_N e\rangle}d\mathbb{P}(e),
\end{align*}
where $\mathcal{D}=\{|\gamma_N|\leq N^{-\kappa_1}, |\hat{\gamma}_N|\leq N^{-\kappa_2}\}$, $\gamma_N$ and $\hat{\gamma}_N$ are defined in Section \ref{sec3}. $\delta(\kappa_1,\kappa_2, N)$ is some constant, and goes to $0$ as $N$ goes to infinity. Therefore the proposition holds if we can show that there exists constants $\eta(\kappa_1,\kappa_2,\epsilon, N))$, which goes to $0$ as $N$ goes to zero and,
\begin{align}\label{eq4.2}
\int_{I_{\epsilon}^{c}\cap\mathcal{D}}e^{\theta N\langle e, B_N e\rangle}d\mathbb{P}(e)
=\eta(\kappa_1,\kappa_2,\epsilon, N))\int_{\mathcal{D}} e^{\theta N\langle e, B_N e\rangle}d\mathbb{P}(e).
\end{align}
Since the spherical integral can be written as
\begin{align}\label{eq4.3}
\int e^{\theta N\langle e, B_N e\rangle}d\mathbb{P}(e)=e^{\frac{N}{2}\int_0^{2\theta} R_{B_N}(s)ds}\int \exp\left\{-\theta N\frac{\gamma_N(\hat{\gamma}_N-v\gamma_N)}{\gamma_N+1}\right\}\prod_{i=1}^{N}dP(\tilde{g}_i)
\end{align}
where the integral in right hand side of (\ref{eq4.3}) is of constant magnitude. So we can rearrange both sides of (\ref{eq4.2}), and cancel the exponential term $\exp\{N(\theta v-\frac{1}{2N}\sum_{i=1}^N\log(1-2\theta\lambda_i(B_N)+2\theta v))\}$. (\ref{eq4.3}) is equivalent to prove
\begin{align*}
\int_{I_\epsilon^{c}\cap \mathcal{D}}\exp\left\{-\theta N\frac{\gamma_N(\hat{\gamma}_N-v\gamma_N)}{\gamma_N+1}\right\}\prod_{i=1}^{N}dP(\tilde{g}_i)\leq \eta(\kappa_1,\kappa_2,\epsilon, N) \int_{\mathcal{D}}\exp\left\{-\theta N\frac{\gamma_N(\hat{\gamma}_N-v\gamma_N)}{\gamma_N+1}\right\}\prod_{i=1}^{N}dP(\tilde{g}_i).
\end{align*}
Take a close look of the integral region of the left hand side of the above expression,
\begin{align*}
I_\epsilon^{c}\cap \mathcal{D}=&\left(\cup_{i=1}^{N}\{|g_i|\geq \sqrt{1+\gamma_N}N^{\epsilon}\}\right)\cap \mathcal{D}
                        \subset\left(\cup_{i=1}^{N}\{|g_i|\geq\frac{1}{2}N^{\epsilon}\}\right)\cap \mathcal{D}
\end{align*}
With these and expression (\ref{eq5}) in section \ref{sec3}, we have
\begin{align*}
&\int_{I_\epsilon^{c}\cap \mathcal{D}}\exp\left\{-\theta N\frac{\gamma_N(\hat{\gamma}_N-v\gamma_N)}{\gamma_N+1}\right\}\prod_{i=1}^{N}dP(\tilde{g}_i)
\leq CN \int_{\{|g_1|\geq \frac{1}{2}N^{2\epsilon}\}\cap \mathcal{D}}\exp\left\{-\theta N\gamma_N(\hat{\gamma}_N-v\gamma_N)\right\}\prod_{i=1}^{N}dP(\tilde{g}_i)\\
&\leq \frac{CN}{2\pi}\int_{I_1\times I_2} e^{ibx_1\sqrt{N}+bx_2\sqrt{N}}\left\{ \int_{|g_1|\geq \frac{1}{2}N^{2\epsilon}} \prod_{i=1}^{i=N}\frac{1}{\sqrt{2\pi}}e^{-\frac{\tilde{g}_i^2}{2}\left(1+\frac{2b(i(1-v+\lambda_i)x_1+(1+v-\lambda_i)x_2)}{\sqrt{N}(1-2\theta\lambda_i+2\theta v)}\right)}d\tilde{g}_i\right\} \prod_{i=1}^{2}e^{-\frac{x_i^2}{2}}dx_i \\
&\leq CNe^{-c'N^{2\epsilon}}\int_{I_1\times I_2} e^{ibx_1\sqrt{N}+bx_2\sqrt{N}}\left\{ \int \prod_{i=1}^{i=N}\frac{1}{\sqrt{2\pi}}e^{-\frac{\tilde{g}_i^2}{2}\left(1+\frac{2b(i(1-v+\lambda_i)x_1+(1+v-\lambda_i)x_2)}{\sqrt{N}(1-2\theta\lambda_i+2\theta v)}\right)}d\tilde{g}_i\right\} \prod_{i=1}^{2}e^{-\frac{x_i^2}{2}}dx_i\\
&\leq CNe^{-c'N^{2\epsilon}}\int_{\mathcal{D}}\exp\left\{-\theta N\frac{\gamma_N(\hat{\gamma}_N-v\gamma_N)}{\gamma_N+1}\right\}\prod_{i=1}^{N}dP(\tilde{g}_i),
\end{align*}
where the term $e^{-N^{2\epsilon}}$ comes from the integral of $\tilde{g}_1$ on the region $|\tilde{g}_1|\geq \frac{1}{2}N^{\epsilon}$, and $C$ is some constant, and may be different in different line. Therefore we can take $\eta(\kappa_1,\kappa_2, \epsilon, N)$ to be $CN e^{-c'N^{2\epsilon}}$. This finishes the proof.
\end{proof}

Since the integral outside the region $I_\epsilon$ is negligible, we only need to consider $\mathbb{E}_{X_N}(e^{\theta N\langle e, X_Ne \rangle})$ for those $e\in I_\epsilon$.
\begin{align*}
\mathbb{E}_{X_N}\left[e^{\theta N \langle e, X_N e\rangle}\right]
=&\prod_{1\leq i<j\leq N}\mathbb{E}[e^{2\theta \sqrt{N} e_ie_j(\sqrt{N}X_{ij})}]
\prod_{1\leq i\leq N} \mathbb{E}[e^{\theta \sqrt{N}e_i^2(\sqrt{N}X_{ii})}]\\
\leq & \prod_{1\leq i<j\leq N}e^{2\theta^2 N e_i^2e_j^2+c|4\theta^3N^{\frac{3}{2}}e_i^3e_j^3|}
\prod_{1\leq i\leq N}e^{\frac{1}{2}\theta^2Ne_i^4+c|\theta^3N^{\frac{3}{2}}e_i^6|}\\
\leq & e^{(N\theta^2+2c|\theta|^3N^{\frac{1}{2}+2\epsilon})\sum_{i,j}e_i^2e_j^2+\frac{\theta^2}{2}N^{2\epsilon}}\\
=&e^{N\theta^2+2c|\theta|^3N^{\frac{1}{2}+2\epsilon}},
\end{align*}
where the first inequality comes from Lemma \ref{lemp}. Similarly we can get the lower bound,
\begin{align*}
e^{N\theta^2-2c|\theta|^3N^{\frac{1}{2}+2\epsilon}}
\leq \mathbb{E}_{X_N}\left[e^{\theta N \langle e, X_N e\rangle}\right]\leq e^{N\theta^2+2c|\theta|^3N^{\frac{1}{2}+2\epsilon}}.
\end{align*}
Combining this and Proposition (\ref{pro7}), we have
\begin{align*}
\mathbb{E}_{X_N}[I_N(\theta, A_N+X_N)]
=&(1+o(1))\int_{I_\epsilon} e^{\theta N\langle e, A_N e\rangle} \mathbb{E}_{X_N}[e^{\theta N \langle e, X_N e\rangle}] d\mathbb{P}(e)\\
=&e^{N\theta^2+O(N^{\frac{1}{2}+2\epsilon})}\int e^{\theta N\langle e, A_N e\rangle}d\mathbb{P}(e).
\end{align*}
Therefore, taking $\log$ on both sides and divide by $N$, the term $\exp\{O(N^{\frac{1}{2}+2\epsilon})\}$ vanishes. This finishes the proof of Proposition \ref{pro1}.
\end{proof}

The proof of Proposition \ref{pro2} follows exactly the same strategy as in section (6.2) of \cite{GuMa}. Since the entries of $X_N$ satisfy log-Sobolev inequality with coefficient $\frac{c}{N}$, and $\frac{1}{N}\log I(\theta, A_N+\cdot)$ is a Lipschitz function, with Lipschitz constant $2|\theta|$, we have the following concentration of measure inequality
\begin{align}
\label{con1}\mathbb{P}\left(\left|\frac{1}{N}\log I(\theta, A_N+X_N)-\mathbb{E}_{X_N}\left[\frac{1}{N}\log I(\theta, A_N+X_N)\right]\right|\geq \delta\right)\leq 2e^{-\frac{N\delta^2}{4|\theta|c}}.
\end{align}

Similarly to the Lemma 25 of \cite{GuMa}, we have the following lemma, which provides us a lower bound on $\mathbb{P}(I(\theta, A_N+X_N)\geq \frac{1}{2} \mathbb{E}(I(\theta, A_N+X_N)))$. And the Proposition \ref{pro2} follows from this lower bound and the concentration inequality (\ref{con1}).
\begin{lemma}
Let $(A_N, X_N)_{N\in \mathbb{N}}$ be a sequence of deterministic real Hermitian matrices and Wigner matrices as in Theorem (\ref{thm1}). Then for $\theta$ small enough and any $\epsilon>0$, there exist finite constants $C$, $c$ which depends on $\epsilon$, $\theta$, $A_N$ and $B_N$, such that
\begin{align*}
\frac{\mathbb{E}_{X_N}\left[\int e^{\theta N\langle e, (A_N+X_N)e\rangle}d\mathbb{P}(e)\right]^2}
{\left(\mathbb{E}_{X_N}\left[\int e^{\theta N\langle e, (A_N+X_N)e\rangle}d\mathbb{P}(e)\right]\right)^2}\leq C e^{c N^{\frac{1}{2}+2\epsilon}}
\end{align*}
\end{lemma}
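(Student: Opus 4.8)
The plan is a second-moment (Paley--Zygmund) argument. Write $Z_N := I_N(\theta,A_N+X_N)=\int e^{\theta N\langle e,(A_N+X_N)e\rangle}\,d\mathbb{P}(e)$; it suffices to bound $\mathbb{E}_{X_N}[Z_N^2]$ by $(\mathbb{E}_{X_N}Z_N)^2$ times a subexponential factor, since then $\mathbb{P}(Z_N\ge\tfrac12\mathbb{E}_{X_N}Z_N)\ge c\,e^{-cN^{1/2+2\epsilon}}$, which beats the exponential rate in the concentration bound (\ref{con1}) and yields Proposition \ref{pro2}. To start, introduce an independent second copy $\tilde e$ of the uniform vector on the sphere; by Fubini,
\[\mathbb{E}_{X_N}[Z_N^2]=\iint e^{\theta N(\langle e,A_Ne\rangle+\langle\tilde e,A_N\tilde e\rangle)}\,\mathbb{E}_{X_N}\!\Big[e^{\theta N(\langle e,X_Ne\rangle+\langle\tilde e,X_N\tilde e\rangle)}\Big]\,d\mathbb{P}(e)\,d\mathbb{P}(\tilde e).\]
Restricting to the event $\{\|X_N\|_\infty\le K\}$ (complement of probability $\le e^{-cN}$ for $K$ large, by the norm bound for log-Sobolev Wigner matrices, and negligible in both numerator and denominator as in the proof of Proposition \ref{pro1}) and then, conditionally on $X_N$, restricting $e,\tilde e$ to $I_\epsilon$ via Proposition \ref{pro7} with $B_N=A_N+X_N$, costs only a factor $1+o(1)$.

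On $I_\epsilon\times I_\epsilon$ the exponent $\theta N(\langle e,X_Ne\rangle+\langle\tilde e,X_N\tilde e\rangle)$ is a sum over the independent entries $X_{ij}$ of terms $2\theta N(e_ie_j+\tilde e_i\tilde e_j)X_{ij}$ (and the diagonal analogues), with $|2\theta\sqrt N(e_ie_j+\tilde e_i\tilde e_j)|\le 4|\theta|N^{-1/2+2\epsilon}\le1$, so Lemma \ref{lemp} applies factor by factor. Summing, and using $\|e\|=\|\tilde e\|=1$ together with $\max_i|e_i|,\max_i|\tilde e_i|\le N^{-1/2+\epsilon}$, the quadratic contributions collapse to $2\theta^2N+2\theta^2N\langle e,\tilde e\rangle^2+O(N^{1/2+2\epsilon})$ and the cubic errors total $O(N^{1/2+2\epsilon})$. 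Putting the $(e,\tilde e)$-integral back over the whole sphere (the integrand is positive, so this only increases it) gives
\[\mathbb{E}_{X_N}[Z_N^2]\le e^{2\theta^2N+O(N^{1/2+2\epsilon})}\iint e^{2\theta^2N\langle e,\tilde e\rangle^2}\,e^{\theta N(\langle e,A_Ne\rangle+\langle\tilde e,A_N\tilde e\rangle)}\,d\mathbb{P}(e)\,d\mathbb{P}(\tilde e)+(\text{negligible}),\]
while the proof of Proposition \ref{pro1} also gives $(\mathbb{E}_{X_N}Z_N)^2=e^{2\theta^2N+O(N^{1/2+2\epsilon})}\,I_N(\theta,A_N)^2$. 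So everything reduces to showing the double integral is $\le C_\theta\,I_N(\theta,A_N)^2$.

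The key observation is that $\langle e,\tilde e\rangle^2=\langle e,(\tilde e\tilde e^{T})e\rangle$, so for each fixed unit vector $\tilde e$ the inner $e$-integral equals $\int e^{\theta N\langle e,(A_N+2\theta\,\tilde e\tilde e^{T})e\rangle}\,d\mathbb{P}(e)=I_N(\theta,A_N+2\theta\,\tilde e\tilde e^{T})$, the spherical integral of a bounded (operator norm $\le\sup\|A_N\|_\infty+2|\theta|$) rank-one perturbation of $A_N$. By the Sherman--Morrison formula, the Hilbert transform of $A_N+2\theta\,\tilde e\tilde e^{T}$, and its derivatives, differ from those of $H_{A_N}$ by $O(1/N)$ uniformly in $\tilde e$ and uniformly for the spectral parameter in a fixed neighbourhood of $v+\tfrac{1}{2\theta}$ (which, for $\theta$ small, sits at distance bounded below from all spectra involved, so $|H_{A_N}'|$ is bounded below there); hence $\int_0^{2\theta}R_{A_N+2\theta\tilde e\tilde e^{T}}(s)\,ds=\int_0^{2\theta}R_{A_N}(s)\,ds+O(1/N)$ and the parameter $A_2$ is moved by $O(1/N)$ only. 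For $\theta$ small enough Theorem \ref{rf} applies uniformly to all these matrices, giving $I_N(\theta,A_N+2\theta\,\tilde e\tilde e^{T})\le C_\theta\,I_N(\theta,A_N)$ uniformly in $\tilde e$. Integrating in $\tilde e$ against $e^{\theta N\langle\tilde e,A_N\tilde e\rangle}\,d\mathbb{P}(\tilde e)$ bounds the double integral by $C_\theta\,I_N(\theta,A_N)^2$, and therefore $\mathbb{E}_{X_N}[Z_N^2]/(\mathbb{E}_{X_N}Z_N)^2\le C\,e^{cN^{1/2+2\epsilon}}$.

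I expect the last step to be the main obstacle: one must compare the spherical integral of $A_N+2\theta\,\tilde e\tilde e^{T}$ with that of $A_N$ \emph{uniformly over all unit vectors $\tilde e$}. Via Sherman--Morrison this comes down to showing that a bounded rank-one perturbation moves the Hilbert transform and its relevant derivatives by $O(1/N)$ while keeping the evaluation point inside the region where the expansion of Theorem \ref{rf} holds; checking the uniform lower bounds on the distance to the spectrum and on $|H_{A_N}'|$ is where the smallness of $\theta$ enters once more. One could instead argue directly that under the $A_N$-tilted sphere measure the vector $e$ stays delocalized for $\theta$ small, so $\langle e,\tilde e\rangle$ has sub-exponential tails at scale $N^{-1/2}$ and $\mathbb{E}[e^{2\theta^2N\langle e,\tilde e\rangle^2}]=O(1)$; the rank-one identity is just a cleaner packaging of the same fact.
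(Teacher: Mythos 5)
The paper gives no proof of this lemma beyond deferring to Lemma 25 of \cite{GuMa}, and your argument is a correct, essentially complete version of the intended replica/second-moment computation: introduce an independent copy $\tilde e$, apply Lemma \ref{lemp} entrywise to the independent entries of $X_N$ on the delocalization event $I_\epsilon\times I_\epsilon$ (where the arguments $2\theta\sqrt N(e_ie_j+\tilde e_i\tilde e_j)$ are indeed $O(N^{-1/2+2\epsilon})$, so the cubic corrections total $O(N^{1/2+2\epsilon})$), and observe that the only surviving cross term is the overlap $2\theta^2N\langle e,\tilde e\rangle^2$. Two small points deserve care. First, restricting to $I_\epsilon\times I_\epsilon$ must be justified as an \emph{upper} bound on $\mathbb{E}[Z_N^2]$: this follows from Proposition \ref{pro7} via $Z_N\le(1-\eta)^{-1}\int_{I_\epsilon}$ on the event $\|X_N\|_\infty\le K$, together with a separate crude bound on $\mathbb{E}[Z_N^2\mathbf{1}_{\|X_N\|_\infty>K}]$; you gesture at both and they are routine. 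Second, your handling of the overlap term via the identity $\int e^{\theta N\langle e,A_Ne\rangle+2\theta^2N\langle e,\tilde e\rangle^2}d\mathbb{P}(e)=I_N(\theta,A_N+2\theta\,\tilde e\tilde e^{T})$ is a clean packaging that differs in flavor from Guionnet--Ma\"{\i}da's direct estimate of $\mathbb{E}[e^{cN\langle e,\tilde e\rangle^2}]$ under the tilted measure; it is sound precisely because a rank-one perturbation moves $N\theta v_B-\tfrac12\sum_i\log(1-2\theta\lambda_i(B)+2\theta v_B)$ by only $O(1)$ (interlacing plus Sherman--Morrison), and because for $\theta$ small the evaluation point $v+\tfrac{1}{2\theta}$ stays far from the spectrum of the spiked matrix, so no BBP-type outlier changes the leading exponential order and the first-order asymptotics apply uniformly in $\tilde e$. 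This uniformity in $\tilde e$ of the error terms in Theorem \ref{rf} is the one step you assert rather than verify, but it holds since all estimates in Section \ref{sec3} depend only on $\sup\|B_N\|_\infty$ and $\theta$; in fact the lemma's generous allowance $Ce^{cN^{1/2+2\epsilon}}$ means even the cruder bounds of Proposition \ref{proposition1} would suffice here.
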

\begin{proof}
The proof is analogue to the proof of Lemma 25 of \cite{GuMa}, so we omit it here.
\end{proof}

\section*{Acknowledgement}
This research was conducted at the Undergraduate Research Opportunities Program of the MIT Mathematics Department, under the direction of Prof. Alice Guionnet. I would like to express to her my warmest thanks both for introducing me to this problem and for her dedicated guidance throughout the research process.

\begin{appendix}
\setcounter{lemma}{0}
\renewcommand{\thelemma}{\Alph{section}\arabic{lemma}}

\section{Gaussian Integral Formula}
\begin{lemma}\label{CI}
Given $n$ by $n$ symmetric matrix $K$, whose real part is positive definite, then we have the following change of variable formula for Gaussian type integral,
\begin{align*}
\int_{\mathbb{R}^n} F(x) e^{-\frac{1}{2}\langle x, K x\rangle}dx=
\det(A)\int_{\mathbb{R}^n} F(Ay+b) e^{-\frac{1}{2}\langle Ay+b, K (Ay+b)\rangle}dy
\end{align*}
where $F$ is a polynomial in terms of $\{x_i\}_{i=1}^{n}$, $A=diag\{a_1,a_2\cdots,a_n\}$ and $\Re(a_i)>0$ for $i=1,2\cdots n$, and $b\in \mathbb{C}^n$.
\end{lemma}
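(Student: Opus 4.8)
\textit{Proof strategy.} The plan is to recognize the two sides as the values at two different parameter points of one and the same function of the scaling and shift, and then to prove that this function is constant. Set
\[
G(a_1,\dots,a_n,b)=\det(A)\int_{\mathbb{R}^n}F(Ay+b)\,e^{-\frac12\langle Ay+b,\,K(Ay+b)\rangle}\,dy ,
\]
so that the left-hand side of the lemma is $G(1,\dots,1,0)$ and the right-hand side is $G(a_1,\dots,a_n,b)$. I would work on the open set $\mathcal U$ of parameters for which this integrand has a genuinely convergent Gaussian tail, i.e.\ for which the real part of the quadratic form $y\mapsto\langle Ay,KAy\rangle$ on $\mathbb{R}^n$ is positive definite (equivalently $\Re(AKA)\succ0$, using $A^{T}=A$ since $A$ is diagonal); this $\mathcal U$ contains $(1,\dots,1,0)$. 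On compact subsets of $\mathcal U$ the Gaussian factor dominates $F$ together with all its $a$- and $b$-derivatives uniformly, so $G$ is holomorphic in $(a_1,\dots,a_n,b)$ there and one may differentiate under the integral sign freely.

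The computational core is a pair of homogeneity identities for $\Phi(y):=F(Ay+b)e^{-\frac12 Q(y)}$ with $Q(y)=\langle Ay+b,K(Ay+b)\rangle$: because $A$ is diagonal and $K$ is symmetric, the chain rule immediately gives $a_k\,\partial_{a_k}\Phi=y_k\,\partial_{y_k}\Phi$ and $\partial_{b_k}\Phi=a_k^{-1}\,\partial_{y_k}\Phi$ (using $\partial_{y_k}F(Ay+b)=a_k(\partial_{z_k}F)(Ay+b)$, $\partial_{a_k}F(Ay+b)=y_k(\partial_{z_k}F)(Ay+b)$, $\partial_{b_k}F(Ay+b)=(\partial_{z_k}F)(Ay+b)$, and the analogous three relations for $Q$). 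Integrating these identities over $\mathbb{R}^n$ and integrating by parts in $y_k$ — with no boundary terms, thanks to the Gaussian decay — I get $\partial_{a_k}\!\int\Phi=-a_k^{-1}\!\int\Phi$ and $\partial_{b_k}\!\int\Phi=0$. Since $\partial_{a_k}\det A=\det(A)/a_k$ and $\det A$ does not involve $b$, the product rule makes the two contributions to $\partial_{a_k}G$ cancel exactly, so $\partial_{a_k}G=\partial_{b_k}G=0$ on $\mathcal U$; a holomorphic function with vanishing gradient is locally constant.

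The remaining step, and the only place that needs care, is to connect the point $(a_1,\dots,a_n,b)$ appearing in the lemma to $(1,\dots,1,0)$ within $\mathcal U$, so that ``locally constant'' upgrades to the asserted equality. I would point out that the bare hypothesis $\Re a_k>0$ is not by itself enough in general — for instance with $n=1$, $K=1$, $a=e^{i\pi/3}$ the right-hand integral already diverges — so the statement must be read together with the (anyway necessary) convergence condition $\Re(AKA)\succ0$; and in every application of this lemma in the paper $A=\mathrm{diag}(a_k)$ has each $a_k$ within $o(1)$ of $1$ (typically $a_k=(1+2\mu_k)^{-1/2}$ with $\mu_k=O(N^{\epsilon-\kappa_2})$) and $b$ correspondingly small, so the segment $s\mapsto\big((1-s)\mathrm{Id}+sA,\ sb\big)$ stays in $\mathcal U$ and $G$ is constant along it, giving $G(a_1,\dots,a_n,b)=G(1,\dots,1,0)$.

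I expect this domain bookkeeping to be the ``main obstacle'' only in a cosmetic sense; the genuine content is the one-line identity $a_k\,\partial_{a_k}\Phi=y_k\,\partial_{y_k}\Phi$ plus integration by parts. A more classical alternative would be to use Fubini to peel off one coordinate at a time and invoke the scalar contour identity $\int_{\mathbb{R}}h=\int_{a\mathbb{R}+b}h$ from Cauchy's theorem — the arcs at infinity vanishing because $\Re(\lambda z^{2})>0$ on the sector swept out when $\Re(\lambda a^{2})>0$ — but that route also forces one to check that integrating out each variable leaves a Gaussian whose real part is the (positive-definite) Schur complement of $\Re K$, which is why I would favour the differentiation argument above.
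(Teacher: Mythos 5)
Your argument is correct, and it takes a genuinely different route from the paper. The paper disposes of this lemma in one sentence — ``reduce to the one dimensional case'' — i.e.\ the classical route you mention at the end: Fubini to peel off one coordinate, then Cauchy's theorem to rotate the contour $a_k\mathbb{R}+b_k$ back to $\mathbb{R}$, the arcs at infinity vanishing in the sector where the relevant quadratic exponent has positive real part. Your primary proof instead treats both sides as values of a single holomorphic function $G(a,b)$ of the scaling and shift, and kills its gradient via the homogeneity identities $a_k\partial_{a_k}\Phi=y_k\partial_{y_k}\Phi$, $\partial_{b_k}\Phi=a_k^{-1}\partial_{y_k}\Phi$ together with integration by parts; the cancellation against $\partial_{a_k}\det A=\det(A)/a_k$ is clean and all steps check out. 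What your approach buys is that one never has to estimate arcs at infinity or track how the exponent behaves along a rotated ray; what it costs is the path-connectedness bookkeeping needed to upgrade ``locally constant'' to the stated identity — which, as you note, is harmless in every application in the paper since the $a_k$ are within $o(1)$ of $1$. Your observation that the bare hypothesis $\Re(a_i)>0$ does not guarantee convergence of the right-hand side (e.g.\ $n=1$, $K=1$, $a=e^{i\pi/3}$) is a genuine and worthwhile catch: the lemma must be read with the implicit condition that the real part of $y\mapsto\langle Ay,KAy\rangle$ remains positive definite, a condition that both your proof and the paper's contour argument require at exactly the same point.
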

\begin{proof}
This can be proved by reducing to one dimensional case.
\end{proof}
\begin{lemma}\label{GIF}
Given $n$ by $n$ symmetric matrix $K$, whose real part is positive definite, then we have the following integral formula for any polynomial $F$( or even power series) in the variables $\{x_i\}_{i=1}^{n}$,
\begin{align*}
\int_{\mathbb{R}^n} F(x) e^{-\frac{1}{2}\langle x, K x\rangle}dx=\frac{\sqrt{2\pi}^n}{\sqrt{\det(K)}}\left\{F(\frac{\partial}{\partial \xi})e^{\frac{1}{2}\langle\xi, K^{-1}\xi \rangle}\right\}\Big{|}_{\xi=0}.
\end{align*}
\end{lemma}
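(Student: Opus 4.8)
The plan is to introduce a source (generating-function) variable and reduce everything to the complex Gaussian integral formula of Lemma \ref{CGI}. First I would define, for $\xi \in \mathbb{C}^n$,
$$Z(\xi) = \int_{\mathbb{R}^n} e^{-\frac12\langle x, Kx\rangle + \langle\xi, x\rangle}\,dx,$$
which converges absolutely because $\Re\langle x, Kx\rangle = \langle x, (\Re K)x\rangle \geq c|x|^2$ for some $c>0$, the real part of $K$ being positive definite. Completing the square gives
$$-\tfrac12\langle x, Kx\rangle + \langle\xi, x\rangle = -\tfrac12\langle x - K^{-1}\xi,\, K(x - K^{-1}\xi)\rangle + \tfrac12\langle\xi, K^{-1}\xi\rangle,$$
and after shifting the contour of integration from $\mathbb{R}^n$ to $\mathbb{R}^n - K^{-1}\xi$ (legitimate since the integrand is entire in $x$ and decays uniformly in bounded strips, again by $\Re K \succ 0$) and applying Lemma \ref{CGI}, I obtain
$$Z(\xi) = \frac{(\sqrt{2\pi})^n}{\sqrt{\det K}}\, e^{\frac12\langle\xi, K^{-1}\xi\rangle}.$$

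Next I would observe that for a monomial $F(x) = x_1^{a_1}\cdots x_n^{a_n}$ one has $F(\partial_\xi)\, e^{\langle\xi, x\rangle} = F(x)\, e^{\langle\xi, x\rangle}$; differentiating $Z$ under the integral sign — which is justified by dominated convergence, since only finitely many derivatives are applied and $|x^\alpha|\, e^{-\frac12\langle x,(\Re K)x\rangle + \langle\Re\xi, x\rangle}$ is integrable uniformly for $\xi$ in a neighbourhood of $0$ — yields
$$F\!\left(\tfrac{\partial}{\partial\xi}\right) Z(\xi) = \int_{\mathbb{R}^n} F(x)\, e^{-\frac12\langle x, Kx\rangle + \langle\xi, x\rangle}\,dx.$$
By linearity this holds for every polynomial $F$. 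Setting $\xi = 0$ on both sides and inserting the closed form of $Z$ from the previous step gives exactly
$$\int_{\mathbb{R}^n} F(x)\, e^{-\frac12\langle x, Kx\rangle}\,dx = \frac{(\sqrt{2\pi})^n}{\sqrt{\det K}}\left[F\!\left(\tfrac{\partial}{\partial\xi}\right) e^{\frac12\langle\xi, K^{-1}\xi\rangle}\right]_{\xi = 0},$$
which is the claim for polynomials. For the power-series case I would truncate $F$ at degree $m$, apply the polynomial identity, and let $m \to \infty$: the tails on both sides are controlled by the assumed decay of the Taylor coefficients of $F$ together with the Gaussian decay of $e^{-\frac12\langle x, Kx\rangle}$ and the explicit combinatorial (Wick-type) form of $[F(\partial_\xi)e^{\frac12\langle\xi, K^{-1}\xi\rangle}]_{\xi=0}$. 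In all applications of this lemma in the present paper $F$ is in fact a polynomial, so this extension is needed only for completeness.

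I expect the main obstacle to be making the evaluation of $Z(\xi)$ fully rigorous, namely the contour shift: one must deform the integration from $\mathbb{R}^n$ to a complex translate and invoke Cauchy's theorem coordinate by coordinate, verifying that the Gaussian factor produced by $\Re K \succ 0$ suppresses the contributions from infinity. An alternative that sidesteps the shift altogether is to prove the final identity first for real positive-definite $K$ — where the left-hand side is $(\sqrt{2\pi})^n/\sqrt{\det K}$ times a genuine expectation under the law $N(0,K^{-1})$ and the identity is Wick's formula — and then extend to all $K$ with $\Re K \succ 0$ by analytic continuation, both sides being holomorphic in the entries of $K$ on that domain and coinciding on the real positive-definite slice.
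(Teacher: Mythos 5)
Your proposal is correct and follows essentially the same route as the paper: introduce the source term $e^{\langle \xi, x\rangle}$, differentiate under the integral sign to pull out $F(\partial/\partial\xi)$, complete the square, evaluate the shifted complex Gaussian integral via Lemma \ref{CGI} (the contour shift you worry about is exactly what the paper's Lemma \ref{CI} supplies), and set $\xi=0$. The alternative you sketch via Wick's formula and analytic continuation in $K$ is a valid fallback but is not needed.
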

\begin{proof}
Consider the Laplacian transform,
\begin{align*}
\int_{\mathbb{R}^n} e^{\langle x, \xi \rangle} F(x)e^{-\frac{1}{2}\langle x, K x\rangle}dx = \int F(\frac{\partial}{\partial \xi})e^{\langle x, \xi \rangle} e^{-\frac{1}{2}\langle x, K x \rangle}dx.
\end{align*}
Since $e^{-\frac{1}{2}\langle x, Kx\rangle}$ is a Schwartz function, with decaying speed faster than $e^{\langle x, \xi\rangle}$. By dominated convergence theorem, we can interchange the integral and differential.
\begin{align*}
\int_{\mathbb{R}^n} e^{\langle x, \xi \rangle} F(x)e^{-\frac{1}{2}\langle x, K x\rangle}dx
=& F(\frac{\partial}{\partial \xi})\left\{\int_{\mathbb{R}^n} e^{\langle x, \xi \rangle} e^{-\frac{1}{2}\langle x, K x \rangle}dx\right\}\\
=& F(\frac{\partial}{\partial \xi})\left\{e^{
\frac{1}{2}\langle \xi, K^{-1}\xi\rangle}\int_{\mathbb{R}^n} e^{-\frac{1}{2}\langle (x-K^{-1}\xi), K (x-K^{-1}\xi) \rangle}dx\right\}.
\end{align*}
From Lemma \ref{CGI} and Lemma \ref{CI}, we get
\begin{align*}
\int_{\mathbb{R}^n} F(x)e^{-\frac{1}{2}\langle x, K x\rangle}dx
=&\int_{\mathbb{R}^n} e^{\langle x, \xi \rangle} F(x)e^{-\frac{1}{2}\langle x, K x\rangle}dx\Big{|}_{\xi=0}\\
=&\frac{\sqrt{2\pi}^{n}}{\sqrt{\det(K)}}\left\{F(\frac{\partial}{\partial \xi})e^{\frac{1}{2}\langle\xi, K^{-1}\xi \rangle}\right\}\Big{|}_{\xi=0}.
\end{align*}
\end{proof}

\section{Detailed Computation for $m_1$}
In this section we give the detailed computation for coefficient $m_1$. To compute $f_0$, $f_1$, $f_2$, take $l=0,1,2$ in (\ref{key}) and follow the process in Page $14,15$. It is not hard to derive,
\begin{align}
\notag f_0(\theta)=&\frac{1}{2\pi}\int \left(1+\frac{1}{N}p_0(x_1,x_2)\right)e^{-\frac{1}{2}\langle x, Kx \rangle}dx+O(N^{-2})\\
\label{ma1}=&\frac{1}{\sqrt{\det(K)}}(1+\frac{1}{N}p_0(\frac{\partial}{\partial \xi_1},\frac{\partial}{\partial\xi_2})e^{\frac{1}{2}{\langle \xi, K^{-1}\xi\rangle}})\Big{|}_{\xi=0}+O(N^{-2}),
\end{align}
where
\begin{align*}
p_0(x_1,x_2)=\frac{\theta^2}{2}\sum_{i=1}^{N} \frac{(i(1-v+\lambda_i)x_1+(1+v-\lambda_i)x_2)^4}{N(1-2\theta \lambda_i+2\theta v)^4}+
\frac{\theta^3}{9}\left\{\sum_{i=1}^{N} \frac{(i(1-v+\lambda_i)x_1+(1+v-\lambda_i)x_2)^3}{N(1-2\theta \lambda_i+2\theta v)^3}\right\}^2
\end{align*}
And for $f_1$ and $f_2$,
\begin{align}
\notag f_i(t)=&\frac{1}{N}\frac{1}{2\pi}\int p_i(x_1,x_2)e^{-\frac{1}{2}\langle x, \tilde{K}x \rangle}dx+O(N^{-2})\\
\label{ma2}=&\frac{1}{N}\frac{1}{\sqrt{\det( \tilde{K})}}p_i(\frac{\partial}{\partial \xi_1},\frac{\partial}{\partial \xi_2})e^{\frac{1}{2}\langle \xi, \tilde{K}^{-1}\xi\rangle}\Big{|}_{\xi=0}+O(N^{-2})
\end{align}
where
\begin{align*}
p_1(x_1,x_2)=& \frac{2t^2}{3}\left\{\sum_{i=1}^{N}\frac{(i(1-v+\lambda_i)x_1+(1+v-\lambda_i)x_2)}{N(1-2\theta\lambda_i+2\theta v)^{2}}\right\}\left\{\sum_{i=1}^{N} \frac{(i(1-v+\lambda_i)x_1+(1+v-\lambda_i)x_2)^3}{N(1-2\theta \lambda_i+2\theta v)^3}\right\}\\
+&2t\sum_{i=1}^{N}\frac{(i(1-v+\lambda_i)x_1+(1+v-\lambda_i)x_2)^{2}}{N(1-2\theta\lambda_i+2\theta v)^{3}}\\
p_2(x_1,x_2)=&\sum_{i=1}^{N}\frac{2}{N(1-2\theta \lambda_i+2\theta v)^2}+2t\left\{\sum_{i=1}^{N}\frac{(i(1-v+\lambda_i)x_1+(1+v-\lambda_i)x_2)}{N(1-2\theta\lambda_i+2\theta v)^{2}}\right\}^2.
\end{align*}
For the integral (\ref{ma1}) and (\ref{ma2}), it is merely symbolic computation, and we can easily do it by some mathematic software, like Mathematica, then get explicit formula for $f_0$, $f_1$ and $f_2$.
\begin{align*}
f_0(\theta)=&\frac{1}{\sqrt{A_2}}
+\frac{1}{N}\frac{1}{\sqrt{A_2}}(\frac{7}{6}-\frac{3}{A_2}
+\frac{2A_3}{A_2^2}-\frac{5}{3}\frac{A_3^2}{A_2^3}+\frac{3}{2}\frac{A_4}{A_2^2})+O(N^{-2}),\\
f_1(t)=&\frac{1}{N}\left\{\frac{ 2t(t-\theta)(2tA_2^2-A_3(t-2\theta)-A_2(t+2\theta))}{\theta^3(\det(\tilde{K}))^{\frac{5}{2}}}\right\}+O(N^{-2})\\
f_2(t)=&\frac{1}{N}\frac{ 2A_2}{(\det(\tilde{K}))^{\frac{3}{2}}}+O(N^{-2}).
\end{align*}
Actually in this way, we can obtain any higher expansion terms of the spherical integral.
\end{appendix}

\bibliographystyle{Abbrv}

\end{document}